\newtheorem{theorem}{Theorem}[section]
\newtheorem{lemma}{Lemma}[section]
\newtheorem{pro}{Proposition}[section]
\theoremstyle{definition}
\newtheorem{definition}{Definition}[section]
\theoremstyle{remark}
\newtheorem{remark}{Remark}[section]
\newcommand{\na}{\nabla}
\newcommand{\p}{\partial}
\newcommand{\n}{\rho}
\newcommand{\de}{\delta}
\newcommand{\si}{\sigma}
\newcommand{\om}{\Omega}
\newcommand{\la}{\label}
\newcommand{\ti}{\tilde}
\newcommand{\bn}{\begin{eqnarray}}
\newcommand{\en}{\end{eqnarray}}
\newcommand{\bnn}{\begin{eqnarray*}}
\newcommand{\enn}{\end{eqnarray*}}
\newcommand{\ba}{\begin{aligned}}
\newcommand{\ea}{\end{aligned}}
\newcommand{\be}{\begin{equation}}
\newcommand{\ee}{\end{equation}}
\def\p{\partial}
\def\norm[#1]#2{\|#2\|_{#1}}
\def\g{\gamma}
\def\ep{\varepsilon}
\def\a{\alpha}
\def\R{\mathbb{R}}
\def\r{\mathbb{R}^{3}}
\def\th{\theta}
\def\de{\delta}
\def\lap{\triangle}
\renewcommand{\div}{{\rm div}}
\theoremstyle{definition}
\numberwithin{equation}{section}
\begin{document}

%\author{Zhilei Liang$^{\dag}$}\thanks{$^{\dag}$ School of Economic  Mathematics, Southwestern University of Finance and Economics, Chengdu 611130,    China.  E-mail:\textbf{ zhilei0592@gmail.com}}
%\author{Dehua Wang$^{\ddag}$}\thanks{$^{\ddag}$ Department of Mathematics, University of Pittsburgh, Pittsburgh, 15260, USA.   E-mail: \textbf{dwang@math.pitt.edu}}
%

\author[Z.  Liang]{Zhilei Liang}
\address{School of Economic Mathematics, Southwestern  University of Finance and Economics, Chengdu  611130,  China}
\email{zhilei0592@gmail.com}
\author[D. Wang]{Dehua Wang}
\address{Department of Mathematics, University of Pittsburgh, Pittsburgh, PA 15260, USA}
\email{dwang@math.pitt.edu}

\title[Cahn-Hilliard-Navier-Stokes equations]  
{Stationary  Cahn-Hilliard-Navier-Stokes equations for the  diffuse interface model of  compressible  flows}
 
\begin{abstract}
A system of  partial differential equations for a diffusion interface model is considered for  the stationary motion of   two macroscopically immiscible, viscous   Newtonian fluids  in a three-dimensional bounded domain.  The governing equations consist  of  the stationary Navier-Stokes equations for compressible fluids  and  a stationary  Cahn-Hilliard type  equation   for  the mass  concentration difference.  Approximate solutions are constructed through a two-level approximation procedure, and the limit of the sequence of approximate solutions is obtained by a  weak convergence method.  New ideas and estimates are developed to establish the existence of   weak solutions with a wide  range of adiabatic exponent.
 \end{abstract}
	
\keywords{Weak solutions,  stationary equations, Navier-Stokes,   Cahn-Hilliard,  mixture of fluids,  diffusion-convection,  free boundary.}
\subjclass[2010]{35Q35,  76N10, 35Q30,  34K21,  76T10.}	
\date{\today}
	
%{\textbf{AMS Subject Classification 2010:} 35Q35,  76N10, 35Q30,  34K21,  76T10.}

\maketitle

 \section{Introduction}

We are concerned with a diffuse interface model for   a mixture of two  viscous   fluids.
 The interface is usually caused by continuous but steep change of flow properties of immiscible or partially miscible fluids, which has been studied  largely in literature (see \cite{AM,low}).
An important analytical and numerical method to model such two-phase flows is the diffuse interface modeling (see \cite{yue}).
The hydrodynamical system of the mixture of two fluids is naturally the Navier-Stokes equations in each fluid domain with the kinematic and other conditions on the interface.
 On the other hand, the Allen-Cahn type  or  Cahn-Hilliard type of mixing models is commonly used based on the choice of the flux and production rate, see \cite{dre,liu} and the reference cited therein.
In this paper,  we  study  the  following stationary  Cahn-Hilliard-Navier-Stokes  equations of the  compressible  mixture of  fluid flows  in a  %three-dimensional
bounded domain $\om\subset\mathbb{R}^3$:
  \begin{equation}\label{1}
\left\{\ba
&\div(\n u)=0,\\
&\div(\n u\otimes u) = \div\left(\mathbb{S}_{ns}+\mathbb{S}_{c}-p\mathbb{I} \right)+\n g_{1}+g_{2},\\
&\div (\n u c)=\div (m\na \mu),\\
&\n\mu=\n \frac{\p f(\n,c)}{\p c}-\lap c,
\ea \right.
\end{equation}
where    $\n,\,u,\, c,\,\mu$ denote  the total density,   the mean velocity field,    the mass concentration  difference of the two components, and the chemical potential, respectively;     $m$ is the mobility that  is assumed    to be one for simplicity,  and $g_{1}$ and $g_{2}$  are given force terms.  We denote the Navier-Stokes    stress tensor  by \be\la{c0} \mathbb{S}_{ns} =\lambda_{1}\left(\na u+(\na u)^{\top}\right)+\lambda_{2} \div u\mathbb{I},\ee
where  $(\na u)^{\top}$ denotes the  transpose of $\na u$,    $\mathbb{I}$ is   the identity matrix,  and   $\lambda_{1},\,\lambda_{2}$ are   constants satisfying $\lambda_{1}>0, \, 2\lambda_{1}+3\lambda_{2}\ge0$.
%\bnn\lambda_{1}>0,\quad2\lambda_{1}+3\lambda_{2}\ge0.\enn
  In comparison with a   single fluid,   there is an additional   capillary  stress tensor  \be\la{b0}\mathbb{S}_{c}=-\na c\otimes \na c+\frac{1}{2}|\na c|^{2}\mathbb{I},\ee which describes  the  capillary effect related to the  surface energy.
  % penalizing mixing of the fluids and  variations of the concentration difference.
In this paper we assume the following form of  pressure    \be\la{b00} p=\n^{2}\frac{\p f(\n,c)}{\p \n} ,\ee  and the     free energy density
\be\ba\la{b1} f(\n,c)&=\n^{\g-1}+f_{mix}(\n,c)=\n^{\g-1}+H_{1}(c)\ln \n  +H_{2}(c),\ea\ee
with  the adiabatic exponent $\g>1$   and two given functions $H_{i}\,(i=1,2)$ of one variable. %will be specified  later.
We   remark that   the  mixed free energy density  $f_{mix}(\n,c)$   is
mainly  motivated   by  the   well-known logarithmic   form (cf. \cite{Fei,low}).
%, particularly,
%$ f_{mix}(\n,c)=\alpha_{1}\left(\frac{1-c}{2}\ln \n\frac{1-c}{2}\right)+\alpha_{2}\left( \frac{1+c}{2}\ln \n\frac{1+c}{2} \right)-\beta c^{2},$
%where the positive  constants  $\alpha_{1},\,\alpha_{2}, \beta$ are given.
We shall study the stationary equations \eqref{1} with
  the  following  boundary conditions:
\be\la{1a}u=0,\,\,\,\frac{\p c}{\p n}=0,\,\,\,\frac{\p \mu}{\p n}=0,\quad {\rm on} \,\,\,\p\om\ee
and the additional conditions:
\be\la{0r} \int \n(x)dx=m_{1}>0\quad {\rm and}\quad  \int \n(x) c(x)dx=m_{2},\ee
with two given   constants $m_1>0$ and $m_{2}$.

System \eqref{1} describes  the equilibrium state  for  the compressible mixture   of  two  macroscopically immiscible, viscous   Newtonian fluids  (cf. \cite{dre,low,tru}).   The goal  of this  paper is to investigate the existence of solutions to the problem \eqref{1}-\eqref{0r}, and give a rigorous  mathematical  justification of the existence of an equilibrium state for the mixture of fluids.  We recall  that the corresponding  evolutionary  system:
 \begin{equation}\la{1t}
\left\{\ba
&\p_{t}\n+\div(\n u)=0,\\
&\p_{t}(\n u)+\div(\n u\otimes u) = \div\left(\mathbb{S}_{ns}+\mathbb{S}_{c}-\n^{2}\frac{\p f(\n,c)}{\p\n}  \mathbb{I}\right),\\
&\p_{t}(\n c)+\div (\n u c)=\lap \mu,\\
&\n\mu=\n \frac{\p f(\n,c)}{\p c}-\lap c,
\ea \right.
\end{equation}
was derived  in Abels-Feireisl \cite[Section 2.2]{Fei},   and can be regarded as a  variant of the  model suggested by   Lowengrub-Truskinovsky \cite{low}.
%, so that     the compact  information  is available  even  in   vacuum zones.
For the evolutionary system \eqref{1t}, the existence of multi-dimensional renormalized weak solution of finite energy was obtained in \cite{Fei}
for  $\g>\frac{3}{2}$, and the one-dimensional weak and strong solutions were studied in \cite{ding, chen}.
%Additionally, there are also some   results  for compressible  time-evolutionary Cahn-Hilliard-Navier-Stokes equations  in   one dimension case.    Ding-Li \cite{ding} discussed      the  existence of global  classical solution, weak and strong solution. For both periodic and Dirichlet boundaries conditions, Chen-He-Mei-Shi \cite{chen} showed  that  the strong solutions   are  asymptotically stable  for  large initial disturbance.
 For the results on the stationary compressible Navier-Stokes equations, we refer the readers to  the books   \cite{lion,novo},  the papers \cite{novo1,freh,mpm,mp1}  and references therein.
For the models of the Cahn-Hilliard-Navier-Stokes  type  of  multi-component  viscous fluids   with phase transitions, in the case of  incompressible  fluids with matched  or non-matched  densities,  there  are several different approaches to  describe  the  evolutionary  diffusion   processes (cf. \cite{AGG11,bat,ERS,eb,hh,star}), and there is a large  literature on the existence and long-time behavior of solutions; see, e.g.,  \cite{ADG13,ables0,ables1,Gui,ables2,boyer2,boyer1,dss,fri,fri2,liu,star} and the references therein;  and we also mention  the existence  results in \cite{bis,ko1,ko2}  for   weak solutions  to   the  stationary  non-Newtonian  flows   in two and  three  dimensions.
As far as the    compressible flow  is concerned,    Lowengrub-Truskinovsky \cite{low}  developed  a thermodynamically and mechanically consistent model that  extends the Euler  and Navier-Stokes models to the case of compressible binary Cahn-Hilliard  mixtures; see also      \cite{an,AM,ds}  for other different approaches.
For general interface dynamics of the mixture of different fluids, solids or gas, including the Allen-Cahn type  and Cahn-Hilliard type, see \cite{an,AM,chan,Fei,ds,dss,bat,dre,eb,low,liu,plot,star,tru} and the references therein for the discussions and mathematical results.

% \section{Main results} \textbf{Notation:}
We now introduce the notation and state our main result. For two given matrices  $\mathbb{A}=(a_{ij})_{3\times3}$ and $\mathbb{B}=(b_{ij})_{3\times3}$,   we denote their   scalar product   by  $\mathbb{A}:\mathbb{B}=\sum_{i,j=1}^{3}a_{ij}b_{ij}$.  For two vectors $a,\,b\in \mathbb{R}^{3}$,  %then $ \mathbb{R}^{3\times3}\ni
$a\otimes b =(a_{i}b_{j})_{3\times3}.$  The characteristic function of a set $A$ is denoted by  $\textbf{1}_{A}$.   Let $C_{0}^{\infty}(\om,\,\r)$ be  the set of all smooth and
compactly supported functions $f: \om \mapsto \r$, and $C_{0}^{\infty}(\om)=C_{0}^{\infty}(\om,\,\mathbb{R})$. Similarly we denote by $C^{\infty}(\overline{\om})=C^{\infty}(\overline{\om},\,\mathbb{R})$ the set of uniformly smooth functions on   $\om.$
%Involving the Lebesgue  integral   in domain $\om$, we  use  simply $ \int f=\int_{\om}f(x)dx$.
We use $ \int f=\int_{\om}f(x)dx$ to denote the integral of $f$ on $\Omega$.
For  any $p\in [1,\infty]$  and integer $k\ge 0,$       $W^{k, p}(\om,\,\r)$ and $W^{k,p}(\om)$    are   the standard  Sobolev spaces (cf. \cite{ad})
%of all  $p$-integrable functions which are
valued in $\r$  or $\mathbb{R}$,   % up to their $k$-th order derivatives.
and  $ L^{p}=W^{0,p}  $ and $ H^{k}=W^{k,2}$.
 We denote by $\overline{f}$    the   weak limit of function $f.$

%Now we are ready to give the definition of weak solutions.

The definition of weak solutions is as follows.
\begin{definition}\la{defi}
The function $(\n,u,\mu,c)$ is    a weak solution to the problem  \eqref{1}-\eqref{0r}  if  for  some $p>\frac{6}{5}$ and  $\th>0$ with  $\th+\g>\frac{3}{2}$,
\bnn\ba &  \n\in L^{\g+\th}(\om ),\,\,\,\n\ge0\,\,a.e.\,\,{\rm in}\,\,\om, \\%\quad \eqref{0r} \,\,{\rm is\,\,satisfied;}\\
& u\in H_{0}^{1}(\om,\r),\quad \mu\in  H_{n}^{1}(\om), \quad c\in W_{n}^{2,p}(\om),\ea\enn
where $W_{n}^{k,p}(\om)=\{f\in W^{k,p}(\om): \,\,\frac{\p f} {\p n}|_{\p\om}=0\}$ for any positive integer $k$ and
$H_{n}^{1}=W_{n}^{1, 2}$, such that,

(i)  The system   $\eqref{1}$  is   satisfied in the distribution sense in $\Omega$,   i.e.,   for any $\Phi\in C_{0}^{\infty}(\om,\,\r)$,
\bnn \int \left(\n u\otimes u +\n^{2}\frac{\p f(\n,c)}{\p \n} \mathbb{I}-\mathbb{S}_{ns}-\mathbb{S}_{c} \right):\na \Phi =\int \left(\n g_{1}+g_{2}\right)\cdot\Phi,\enn
 and  for  any $\phi\in   C^{\infty}(\overline{\om})$,
\bnn \int \n u\cdot \na \phi =0,\quad
\int \n c u\cdot \na \phi =\int\na \mu\cdot \na \phi,\quad  \int \n \mu\phi -\n \frac{\p f(\n,c)}{\p c}\phi=\int\na c\cdot \na \phi;\enn
and \eqref{0r}  holds for some $m_{1}>0$ and $m_{2}\in \R.$

(ii) %After extended to $\r\backslash \om$   by zero,  the function  $(\n, u)$  satisfies     $\eqref{1}_{1}$   in  distributional  sense.  Moreover,   $(\n,u)$  is a renormalized solution by Di-Perna and Lion \cite{di}, i.e.,     \bnn \int_{\r}b(\n) u\cdot \na \phi dx=\int_{\r} \left(b'(\n)\n-b(\n)\right)\div u \phi dx,\enn
If $(\n,u)$ is prolonged by zero outside $\Omega$, then both the equation $\eqref{1}_{1}$ and
$$\div(b(\n) u)+ \left(b'(\n)\n-b(\n)\right)\div u=0$$ are satisfied in the distribution sense in $\r$,
where  $b\in C^{1}([0,\infty))$ with  $b'(z)=0$ if $z$ is  large enough.

 (iii) The   energy inequality  is valid
\bnn \int \left(\lambda_{1}|\na u|^{2}+(\lambda_{1}+\lambda_{2})(\div u)^{2}+|\na \mu|^{2}\right) dx\le \int \left(\n g_{1}+g_{2}\right)\cdot u.\enn
\end{definition}

%Our main result in this paper reads as follows:
We are ready to state our main result.
\begin{theorem}\la{t}
Assume that  $\om\subset\mathbb{R}^3$ is a bounded and simply connected domain with  $C^{2}$ smooth boundary,     \be\la{0}  g_{1},\,\, g_{2} \in  L^{\infty}(\om,\r),\ee
  the  functions   in \eqref{b1} satisfy
\begin{equation}\label{b2} |H_{i}(c)|+|H_{i}'(c)|\le \overline{H}<\infty,\quad \forall \, c\in\mathbb{R}, \quad i=1,2,\end{equation}
for some constant $\overline{H}$, % for each $c\in\mathbb{R}$,
and in addition,
\be\la{ss} \left\{\ba
&\g>\frac{5}{3}, %<\g\le \frac{5}{3},
\,\,\,\,{\rm if } \,\,\,\na \times g_{1}\equiv 0\;  \text{\rm  in } \Omega,\\
& \g>2, \,\,\,\, \text{ \rm otherwise}. %{\rm if } \,\,\,\na \times g_{1}\neq 0.
\ea\right.  \ee
Then, for any given constants $m_{1}>0$ and $m_{2}\in\R$,  the problem \eqref{1}-\eqref{0r}  admits a  weak solution  $(\n,u,\mu,c)$  in the sense of Definition \ref{defi}.
\end{theorem}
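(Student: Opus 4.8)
The plan is to construct weak solutions via a two-level approximation scheme in the spirit of Lions and Feireisl for the stationary compressible Navier–Stokes equations, suitably adapted to accommodate the Cahn–Hilliard coupling. At the first level I would introduce an artificial pressure term $\delta(\n^{\beta}+\n^{2})$ with $\beta$ large, together with an elliptic regularization $\varepsilon\lap\n+\varepsilon\n$ in the continuity equation and a compatible damping term $\varepsilon|u|^{2}u$ (or $\varepsilon\n u$) in the momentum balance, so that the density stays bounded away from vacuum and the momentum equation becomes coercive. At the second (Galerkin) level, fix a finite-dimensional subspace $X_n\subset H_0^1(\om,\r)$ spanned by smooth functions; for a given $u\in X_n$ solve the regularized continuity equation for $\n=\n[u]$ by standard elliptic theory, then solve the two scalar equations for $c$ and $\mu$ (a fourth-order Cahn–Hilliard type problem with Neumann data, handled by elliptic regularity and the $W^{2,p}_n$ framework, using \eqref{b2} to control the lower-order terms), and finally solve the finite-dimensional momentum equation for $u$ by a Leray–Schauder fixed point argument. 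Passing $n\to\infty$ and then $\varepsilon\to 0$, $\delta\to 0$ recovers \eqref{1}.

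The core analytic input at each stage is the energy inequality: testing the momentum equation by $u$, the Cahn–Hilliard system by $\mu$ and by $c$, and using \eqref{b0}, \eqref{b00}, \eqref{b1} to see that the capillary stress and the chemical-potential terms combine telto the dissipation $\int(\lambda_1|\na u|^2+(\lambda_1+\lambda_2)(\div u)^2+|\na\mu|^2)$ on the left and $\int(\n g_1+g_2)\cdot u$ on the right, as in Definition \ref{defi}(iii). Together with the mass constraints \eqref{0r} and \eqref{b2}, this gives uniform bounds on $u$ in $H_0^1$, on $\na\mu$ in $L^2$, and — after controlling $\int\n^\gamma$ and the logarithmic term $H_1(c)\ln\n$ via the constraint $\int\n=m_1$ and a Bogovskii-type argument — a first bound on the density. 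To upgrade the density integrability to $\n\in L^{\gamma+\theta}$ I would use the classical test-function technique: test the momentum equation with $\na\lap^{-1}(\n^{\theta}-\overline{\n^\theta})$ (Bogovskii operator on $\om$), which produces the improved estimate provided $\theta$ is chosen small relative to $\gamma$; the hypothesis \eqref{ss} on $\gamma$ is exactly what makes $\gamma+\theta>3/2$ achievable while keeping the convective term $\n u\otimes u$ integrable. The dichotomy in \eqref{ss} reflects whether one can exploit the Helmholtz decomposition of $g_1$: when $\na\times g_1\equiv0$ the potential part of the forcing can be absorbed into the pressure, which weakens the required lower bound on $\gamma$ from $2$ to $5/3$.

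The decisive step, and the main obstacle, is proving strong convergence of the density in $L^1$ (equivalently, identifying the weak limit of the pressure with $p(\overline{\n},\overline{c})$) when passing to the limit in the approximations. Here I would follow the effective-viscous-flux identity: using the operators $\na\lap^{-1}\div$ and commutator estimates (Div–Curl / compensated compactness), one shows that $\overline{p\,\n}-\overline{p}\,\overline{\n}$ is controlled by $(\lambda_1+\lambda_2+\text{something})(\overline{\n\div u}-\overline{\n}\,\overline{\div u})$ up to terms involving the capillary stress $\mathbb{S}_c$, which must be shown to pass to the limit — this is where the $W^{2,p}_n$ regularity of $c$ and the strong convergence of $\na c$ (obtained from the compactness of the Cahn–Hilliard operator) are essential, so that $\na c\otimes\na c$ converges strongly and does not obstruct the flux identity. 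Combining this with the renormalized continuity equation (Definition \ref{defi}(ii), justified via DiPerna–Lions once $\n\in L^2_{\rm loc}$) and a careful analysis of the defect measure $\overline{\n\ln\n}-\overline{\n}\ln\overline{\n}$, monotonicity of the dominant pressure term $\n^\gamma$ forces the defect to vanish, yielding $\n_k\to\n$ a.e. One additional subtlety, absent in the single-fluid case, is handling the non-monotone logarithmic contribution $H_1(c)\ln\n$ to the pressure: since $H_1$ is only bounded (by \eqref{b2}) and not sign-definite, I would treat it as a lower-order perturbation absorbed by the $\delta\n^\beta$ and $\n^\gamma$ terms during the $\delta\to0$ limit, using the already-established strong convergence of $c$ to pass to the limit in $H_1(c)$ first. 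Once a.e. convergence of $\n$ is in hand, passing to the limit in all remaining terms and recovering \eqref{0r} and the energy inequality is routine.
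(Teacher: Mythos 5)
Your proposal captures the broad architecture that the paper actually uses: a two-level approximation (elliptic regularization of the continuity equation plus an artificial $\delta$-pressure), a fixed-point argument for the regularized system, energy estimates from testing the momentum, $\mu$-, and $c$-equations, a Bogovskii test function for higher density integrability, the effective viscous flux identity together with renormalized continuity and $T_k$-truncations for strong $L^1$ convergence of the density, and a strong-convergence argument for $\na c$ from the Cahn--Hilliard structure. The handling of $H_1(c)\ln\n$ by a cut-off/decomposition so that the remaining pressure is monotone is also essentially the paper's Remark 2.1. At the fixed-point level the paper works directly with Schaefer's theorem in $W_0^{1,\infty}\times W_n^{1,p}\times W_n^{1,p}$ rather than a Galerkin projection, but this is a cosmetic difference.

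There is, however, a genuine gap in your treatment of the density estimates after the artificial pressure is removed. You claim that testing the momentum equation against $\na\lap^{-1}(\n^{\theta}-\overline{\n^{\theta}})$ (i.e., a Bogovskii test function) ``produces the improved estimate provided $\theta$ is small.'' In the stationary problem this is not self-contained: unlike the evolutionary case there is no initial energy bound putting $\n$ in $L^{\gamma}$, so after the $\delta$-limit the only a priori information is $\|\n\|_{L^1}=m_1$. The Bogovskii test function then produces terms of the form $\|\n^{\bar b}|u|^2\|_{L^1}$ and $\|\n^{\bar b}\mu^2\|_{L^1}$ on the right-hand side, and closing the estimate requires the \emph{weighted (potential) pressure estimates} of Frehse--Steinhauer--Weigant and Mucha--Pokorn\'y--Zatorska: one must first prove $\int \n^{\gamma}|x-x^*|^{-\alpha_0}\,dx \le C(1+\|\n^{\bar b}(|u|^2+\mu^2)\|_{L^1})$ uniformly in $x^*\in\overline{\Omega}$ (Proposition \ref{c1}), with carefully built test functions adapted to the boundary, and then feed this through a Newton-potential representation to bound $\|\n^{\bar b}(|u|^2+\mu^2)\|_{L^1}$ by itself with exponent $\beta<1$. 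The thresholds $\gamma>5/3$ and $\gamma>2$ in \eqref{ss} are exactly the output of that iteration (in particular of the constraint $\frac{\bar b(3-\alpha_0)}{2}<\gamma$ together with $\bar b=\frac{4\gamma+7\theta}{3(\gamma+\theta)}$), not of the plain Bogovskii step; your proposal as stated would leave the convective and capillary terms uncontrolled.

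A second, smaller inaccuracy: you explain the dichotomy in \eqref{ss} by saying that when $\na\times g_1\equiv 0$ the potential part of $g_1$ can be ``absorbed into the pressure.'' The actual mechanism in the paper is more elementary: if $g_1=\na G$, then $\int \n g_1\cdot u=\int g_1\cdot(\n u)=-\int G\,\div(\n u)=0$ by the continuity equation, so the term $\|\n u\|_{L^1}\|g_1\|_{L^\infty}$ simply drops out of the energy inequality and the iteration closes with the weaker condition $\gamma>5/3$. Finally, the paper must also control $\div \mathbb{S}_c$ inside the weighted estimates; this requires the $W^{2,p}$ bound on $c$ via $\lap c = \n\,\partial_c f - \n\mu$ and the constraint $\frac{\gamma+\theta}{\theta}<\frac{3}{\alpha_0}$, another coupling between $\theta,\alpha_0$ and $\gamma$ that your sketch does not address.
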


  We shall prove  Theorem \ref{t} via two  levels of approximations and weak convergence methods, which rely on  the heuristic approaches   in \cite{Fei,freh, jiang, jiang2, lion,novo,novo1, mpm,mp1}.    We remark that our stationary problem seems   worse than the time-evolutionary  one  because the energy inequality by itself gives less useful information  about the sequence of  approximate solutions, and it is much  more complicated than  the Navier-Stokes equations of the single fluid due to the coupling with the Cahn-Hilliard equations.
Our construction of weak solution in Theorem  \ref{t} of this paper follows the  spirit of  \cite{lion,novo,novo1,freh,mpm,mp1} for the stationary compressible Navier-Stokes equations, but we also need to overcome extra barriers from the coupled Cahn-Hilliard equations.
  The main difficulties and our  strategies  are  described  below.

 % \underline{{\it Construction of approximation \eqref{n1} with parameters $\ep$ and $\de$:}}

We first construct the approximate  system   \eqref{n1}, which is inspired by the   time-discretization  of equations \eqref{1t}. The main ideas for this approximation are the following:
%in Section 3  we begin  with  the  approximate  system   \eqref{n1}.
%
 (1) To guarantee the sufficient regularity on density $\n$,   we  add the  diffusion term $\ep^{4} \lap \n$ in the transport  equation and   an   artificial  pressure in the momentum equation.       Our choice of  $\ep^{4}$     as the   diffusion coefficient  makes it possible  to avoid the appearance of  new parameters and thus simplify   the  approximation  procedures.
  (2) In the   proof,   the total mass and  difference of  volume fraction should  be preserved,    namely,  both $\int_{\om}\n(x)dx$  and $\int_{\om} \n(x) c(x)dx$  are   constant.
This is    necessary from both the    physical and mathematical point of view,  and can be derived by  the    Hardy-Poincar\'e type inequality as well as  the well-posedness  of solutions.   For this purpose, we use  $\ep^{2} (\n -\n_{0})$ and $\ep (\n c-\n_{0}c_{0})$ in the approximation, which can be regarded as time discretization of
$\p_{t} \n$ and $\p_{t}(\n c)$ respectively.
  (3) {For fixed $\ep$ and $\de$,  we solve   \eqref{n1} by    the {Schaefer} fixed point theorem.
Some new ideas are needed in the proof.  
Firstly, the solution is not self-contained due to  the Neumann boundary conditions imposed  on $\mu$ and $c$.  To fix the constants, we add  compatible  integral conditions in the system  \eqref{n3}.  Secondly, we use the  conservative quantities \eqref{0r} and interpolation techniques  to obtain the  required estimates so that the uniform   {\it a priori} bounds can be closed. Next,  notice that the  pressure $p$  relies not only on  $\n$ but also on $c$,  and hence is  not   monotone   in $\n$ for all range of $c.$  In this connection,    we adopt  some idea in   \cite{Fei} and decompose}
 \be\la{ppp} p=\n^{2}\frac{\p f}{\p\n}=\n^{2}\frac{\p \ti{f}}{\p\n}-2\overline{H}\n\textbf{1}_{\{\n\le k\}}=\ti{p}-2\overline{H}\n\textbf{1}_{\{\n\le k\}},\ee
where    $\overline{H}\n\textbf{1}_{\{\n\le k\}}$ is  bounded for  some large but finite constant $k$.  See Remark \ref{r2.1} for the detail.
Finally, to avoid the appearance of higher order derivatives  of $c$,  we replace the capillary stress  % \bnn
 $\div\left( -\na c\otimes \na c+\frac{1}{2}|\na c|^{2}\mathbb{I}\right)
$ %\enn
in   \eqref{n1}  by  the equivalent expression %\bnn
$\left(\n\mu -\n \frac{\p f(\n,c)}{\p c}\right)\na c.$
%\enn

%\underline{{\it$\ep$-limit procedure for equations \eqref{n6}:}}

%In Section 4  we   build uniform  in $\ep$ the {\it a priori}  estimates to guarantee the  $\ep$-limit procedure  for  approximation sequences \eqref{n6},     by using   the compactness  theories   developed  in \cite{jiang,jiang2,lion}.

 Then we  shall establish  the    {\it a priori}  estimates uniform  in $\ep$ to guarantee the  $\ep$-limit procedure to obtain  the approximation sequence  \eqref{n6}     by using   the compactness  theories   developed  in \cite{jiang,jiang2,lion}.
 In the proof, we need   strong convergence of $\na c$  for  taking   limit   in  the momentum equation. For this purpose,   we shall make full use of  the properties obtained  from the higher order diffusion  in  the Cahn-Hilliard  equation; see for example the proof of    \eqref{b19}.
  Another difficulty   is  the non-monotonicity of  the pressure   with respect to $\n.$  Thanks to   the  decomposition technique  (see Remark \ref{r2.1})
  $H_{1}(c)$ is always   positive,   which leads to our desired estimates.

%\underline{{\it $\de$-limit in vanishing artificial pressure term}:}
Finally we need to show the $\de$-limit in the vanishing artificial pressure term. The proof shall be  based on  the compactness theories in \cite{jiang,jiang2}.
 The   difficulty     is that  the    approximation  sequence  does not provide any good estimate on the density  but  $\|\n\|_{L^{1}}$, which is different from the   evolutionary  equations for which the density  $\n$  is bounded in   $L^{\g}$ with  $\g>1$.  To overcome the  difficulty, we borrow some  ideas   {developed in  \cite{freh,mpm}  and  derive   the higher regularities    by means of  weighted pressure  estimates. However, we need to handle the  difficulties caused by  the Newmann boundary conditions and the appearance of  the strongly nonlinear stress tensor $\div\left( -\na c\otimes \na c+\frac{1}{2}|\na c|^{2}\mathbb{I}\right)$.  See  Lemma \ref{lem5.2} for the detail.}

 The rest of the paper is organized as follows.
 In Section 2, we construct the two-level approximation system, find the solution by the fixed point theorem, and  derive some energy estimates.
 In  Section 3, we derive the uniform estimates in $\ep$ and pass the limit as $\ep$ goes to zero.
 In Section 4, we derive the uniform estimates in $\de$ and pass the limit as $\de$ goes to zero to finally obtain the weak solution in Theorem \ref{t}.

 \bigskip

\section{Construction  of  approximation solutions}

We first set the following fixed constants:  %$\ep\in (0,1),\,\,\de\in (0,1)$.
 \be\la{w3}\ep\in (0,1),\,\,\de\in (0,1);\quad \n_{0} =\frac{m_{1}}{|\om|},\,\,c_{0}=\frac{m_{2}}{m_{1}},\ee  where  $m_{1},\,\,m_{2}$  are taken from  \eqref{0r}, and $|\om|$ denotes the Lebesgue measure of $\om$.
Then we consider  the following approximate system:
  \begin{equation}\label{n1}
\left\{\ba
& \ep^{2} \n + {\rm div}(\n u) =\ep^{4} \lap \n +\ep^{2} \n_{0},\\
&\ep^{2}\n u+{\rm div} (\n u\otimes u)+\na \left(\de \n^{4}+\n^{2} \frac{\p f(\n,c)}{\p\n}\right)+\ep^{4}\na \n\cdot \na u\\
&\qquad  = {\rm div} \mathbb{S}_{ns}+\n\mu\na c-\n \frac{\p f(\n,c)}{\p c}\na c+\n g_{1}+g_{2},\\
& \ep \n  c+\n u\cdot \na c=\lap \mu+\ep \n_{0}c_{0},\\
&\n\mu=\n \frac{\p f(\n,c)}{\p c}-\lap c, \ea\right.
\end{equation}
with the  boundary conditions
 \be\la{n1b} u=0,\quad \frac{\p \n} {\p n}=0,\,\,\, \frac{\p c} {\p n}=0,\,\,\frac{\p \mu} {\p n}=0,\quad {\rm on}\,\,\,\p\om.\ee
  \begin{remark}  A direct computation shows,  at   least formally,
  \bnn \n\mu\na c-\n \frac{\p f(\n,c)}{\p c}\na c=-\lap c\na c=\div\left( -\na c\otimes \na c+\frac{1}{2}|\na c|^{2}\mathbb{I}\right) =\div \mathbb{S}_{c}.\enn\end{remark}

  The   following lemma is concerned with the solvability of $\eqref{n1}_{1}$, and its proof can be found  in \cite[Prop. 4.29]{novo}.
  \begin{lemma}[\cite{novo}, Proposition 4.29]
  \la{lem2.1}
Suppose   \be\la{a5} v\in  W_{0}^{1,\infty}(\om,\r):=\{v\in W^{1,\infty}(\om,\r),\,\,\,v|_{\p\om}=0\}.\ee
Then  there exists  a  function  $\n=\n(v)\in W^{2,p}(\om)\,\,(1<p<\infty)$  such that     for any  $\eta\in C^{\infty}(\overline{\om})$,
  \be\la{a3}  \ep^{4}\int \na \n\cdot \na \eta -\int \n v\cdot\na \eta +\ep^{2}\int (\n-\n_{0})\eta=0, \ee
  where   $\ep>0$ is a fixed constant.
Moreover,
\be\la{a17} \n\ge0\,\,a.e.\,\,{\rm in}\,\,\om,\quad\|\n\|_{L^{1}}=m_{1},\quad  \|\n\|_{W^{2,p}}\le C(\ep,p, \|v\|_{W^{1,\infty}}).\ee
\end{lemma}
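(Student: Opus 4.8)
The plan is to recognize the weak identity \eqref{a3} as the variational formulation of the linear Neumann boundary value problem
\[
-\ep^{4}\lap\n+\div(\n v)+\ep^{2}\n=\ep^{2}\n_{0}\quad\text{in }\om,\qquad \frac{\p\n}{\p n}=0\quad\text{on }\p\om,
\]
and to solve it by the Fredholm alternative rather than by a direct energy or fixed-point argument. The latter is delicate here: writing $\div(\n v)=v\cdot\na\n+\n\,\div v$, the zeroth order coefficient $\ep^{2}+\div v$ need not have a sign, so testing the equation with $\n$ (or with $\n^{-}$) produces a term $\tfrac12\int(\div v)\n^{2}$ that cannot be absorbed by $\ep^{2}\|\n\|_{L^{2}}^{2}$ when $\ep$ is small.

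First I would introduce the bounded bilinear form $a(\n,\eta):=\ep^{4}\int\na\n\cdot\na\eta-\int\n\,v\cdot\na\eta+\ep^{2}\int\n\eta$ on $H^{1}(\om)\times H^{1}(\om)$ (the term $\int\n\,v\cdot\na\eta$ is controlled since $v\in W^{1,\infty}$). Young's inequality yields a G\aa rding estimate $a(\n,\n)\ge\tfrac{\ep^{4}}{2}\|\na\n\|_{L^{2}}^{2}-C_{0}\|\n\|_{L^{2}}^{2}$ with $C_{0}=C_{0}(\ep,\|v\|_{L^{\infty}})$, so by the Rellich theorem the operator $A:H^{1}(\om)\to H^{1}(\om)^{*}$ induced by $a$ is a compact perturbation of an isomorphism, hence Fredholm of index zero; using density of $C^{\infty}(\overline\om)$ in $H^{1}(\om)$, solving \eqref{a3} amounts to solving $A\n=\ep^{2}\n_{0}$. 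The adjoint form $a(\eta,\psi)$ corresponds to the Neumann problem $-\ep^{4}\lap\psi-v\cdot\na\psi+\ep^{2}\psi=0$, now with the \emph{strictly positive} zeroth order coefficient $\ep^{2}$: elliptic regularity (bootstrapping $H^{1}\to H^{2}\to W^{2,6}\to W^{2,p}$, using $H^{2}\hookrightarrow W^{1,6}\cap L^{\infty}$ and $W^{2,6}\hookrightarrow W^{1,\infty}$ in three dimensions) puts any weak solution $\psi$ in $C^{1}(\overline\om)$, and the weak maximum principle together with the Hopf lemma (both available because $\ep^{2}\ge0$: interior extrema are excluded since there $\na\psi=0$ while $-\ep^{4}\lap\psi$ forces $\ep^{2}\psi$ to have the wrong sign, boundary extrema since $\p_{n}\psi=0$) force $\psi\equiv0$. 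Hence $\ker A^{*}=\{0\}$, so $A$ is bijective, \eqref{a3} has a unique solution $\n\in H^{1}(\om)$, and $\|\n\|_{H^{1}}\le\|A^{-1}\|\,\ep^{2}\|\n_{0}\|_{H^{1}(\om)^{*}}=:C(\ep,\|v\|_{W^{1,\infty}})$ by the open mapping theorem.

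Next I would derive the properties in \eqref{a17}. For $\n\ge0$ a.e.\ I would argue by duality: for any $\phi\in C^{\infty}(\overline\om)$ with $\phi\ge0$, solve the adjoint problem $-\ep^{4}\lap\psi-v\cdot\na\psi+\ep^{2}\psi=\phi$, $\p_{n}\psi=0$ (uniquely solvable with $\psi\in C^{1}(\overline\om)$ by the previous step); the same maximum-principle argument, now with a nonnegative right-hand side, gives $\psi\ge0$, and testing \eqref{a3} with $\eta=\psi$ yields $\int\n\phi=\langle A\n,\psi\rangle=\ep^{2}\int\n_{0}\psi=\ep^{2}\n_{0}\int\psi\ge0$; since $\phi\ge0$ is arbitrary, $\n\ge0$. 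Taking $\eta\equiv1$ in \eqref{a3} gives $\int\n=\int\n_{0}=m_{1}$, hence $\|\n\|_{L^{1}}=m_{1}$. Finally, rewriting the equation as $-\ep^{4}\lap\n+\ep^{2}\n=\ep^{2}\n_{0}-v\cdot\na\n-\n\,\div v$ and running the same elliptic bootstrap as above (now for $\n$ itself), one obtains $\n\in W^{2,p}(\om)$ for every $p\in(1,\infty)$ with $\|\n\|_{W^{2,p}}\le C(\ep,p,\|v\|_{W^{1,\infty}})$.

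The step I expect to be the main obstacle is precisely the solvability and uniqueness in the presence of the indefinite zeroth order coefficient $\ep^{2}+\div v$: the equation itself admits neither a coercive weak formulation nor a maximum principle, and the energy method only produces $\|\div v\|_{L^{\infty}}$-terms that are not absorbable for small $\ep$. The resolution is to pass to the adjoint operator, which carries the favorable sign $\ep^{2}>0$, and to combine the Fredholm alternative with the duality trick above to recover, in order, solvability, uniqueness, nonnegativity, and then the mass constraint and the $W^{2,p}$ regularity.
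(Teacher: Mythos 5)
The paper does not prove Lemma \ref{lem2.1}; it cites \cite[Prop.\ 4.29]{novo}, and the rest of the paper (e.g.\ the proof of Theorem \ref{t3.1}) proceeds in the fixed-point spirit of that reference. Your argument is correct and complete, but it follows a genuinely different route. You rightly observe that a direct Lax--Milgram or energy argument fails because the zeroth-order coefficient $\ep^{2}+\div v$ in the strong form is indefinite, and you circumvent this by passing to the adjoint: a G\aa rding estimate makes the induced operator Fredholm of index zero, the adjoint equation $-\ep^{4}\lap\psi-v\cdot\na\psi+\ep^{2}\psi=0$ carries the favorable sign $+\ep^{2}$, and elliptic bootstrap ($H^{1}\to W^{2,p}$, $p>3$) followed by the strong maximum principle and the Hopf lemma (admissible since $\p\om\in C^{2}$) give $\ker A^{*}=\{0\}$, hence bijectivity; nonnegativity then follows by duality, and the mass identity and $W^{2,p}$ bound are elementary. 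The route in the cited source, and in the compressible-flow literature more generally, is instead to regularize the transport term by replacing $\n$ with its positive part $\n^{+}$ in $\div(\n v)$, so that testing the modified equation with $\n^{-}$ closes without any sign condition on $\div v$ (the cross term $\int\n^{+}v\cdot\na\n^{-}$ vanishes because the supports are disjoint); existence then comes from a Leray--Schauder fixed point, and $\n\ge 0$ is proved \emph{a posteriori}, after which the truncated and original equations coincide on the solution. Both arguments are sound: yours trades the fixed point and the positive-part truncation for the adjoint maximum principle and a duality argument, which is arguably more conceptual but uses the $C^{2}$ boundary and elliptic regularity earlier and more heavily than the standard proof does.
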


Next,  we consider the    Neumann boundary  problem
\be\la{n}\lap \n= \div b \quad {\rm with} \quad \frac{\p \n} {\p n}\Big|_{\p \om}=0.\ee
\begin{lemma}[\cite{novo}, Lemma 4.27]  %\cite[Lemma 4.27 ]{novo}
\la{lem2.3}
 Let $p\in (1,\infty)$  and  $b\in  L^{p}(\om,\r)$ be given.  Then  the problem  \eqref{n} admits  a  solution  $\n\in W^{1,p}(\om)$,   satisfying
\bnn \int \na \n\cdot \na \phi=\int b\cdot \na \phi,\quad \forall\,\,\,\phi\in C^{\infty}(\om),\enn
and  the  estimates
\bnn\|\na \n\|_{L^{p}}\le C(p,\om)\|b\|_{L^{p}}\quad{\rm and }\quad
\|\na \n\|_{W^{1,p}}\le C(p,\om)(\|b\|_{L^{p}}+ \|\div b\|_{L^{p}} ).\enn
\end{lemma}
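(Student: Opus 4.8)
The plan is to read \eqref{n} in the weak form recorded in the statement, namely $\int\na\n\cdot\na\phi=\int b\cdot\na\phi$ for all $\phi\in C^{\infty}(\overline{\om})$ (equivalently, for all $\phi\in W^{1,p'}(\om)$ with $\tfrac1p+\tfrac1{p'}=1$); since constants have zero gradient this identity imposes no solvability obstruction and pins $\n$ down only up to an additive constant, which I would fix by $\int\n=0$. First I would settle the Hilbert case $p=2$ variationally: on $V=\{v\in H^{1}(\om):\int v=0\}$ the form $a(u,v)=\int\na u\cdot\na v$ is coercive by the Poincar\'e--Wirtinger inequality (valid since $\om$ is bounded, connected and $C^{2}$), while $v\mapsto\int b\cdot\na v$ is a bounded functional on $V$ because $b\in L^{2}$, so Lax--Milgram delivers a unique $\n\in V$ satisfying the weak identity, and testing with $\phi=\n$ gives $\|\na\n\|_{L^{2}}\le\|b\|_{L^{2}}$; if in addition $\div b\in L^{2}$, then $\n$ solves $\lap\n=\div b\in L^{2}$ with homogeneous Neumann data and the classical $H^{2}$ regularity for the Neumann Laplacian on a $C^{2}$ domain gives $\|\n\|_{H^{2}}\le C(\|\na\n\|_{L^{2}}+\|\div b\|_{L^{2}})$.

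Next I would prove the a priori bound $\|\na\n\|_{L^{p}}\le C(p,\om)\|b\|_{L^{p}}$ for $p\in(1,\infty)$; granting it, existence of a $W^{1,p}$ solution and the estimate follow for general $b\in L^{p}$ by approximating $b$ in $L^{p}$ by smooth fields, solving as in the $p=2$ step (where elliptic regularity makes the solutions smooth), using the uniform $W^{1,p}$ bound to extract a weak limit, and passing to the limit in the weak identity; uniqueness up to constants is inherited from $p=2$, since any two $W^{1,p}$ solutions differ by a weakly harmonic function with vanishing Neumann trace, hence by a constant. The a priori bound itself is the $L^{p}$ Calder\'on--Zygmund estimate for the Neumann problem: with a finite cover of $\overline{\om}$ and a subordinate partition of unity, on interior patches the gradient of the solution is, modulo lower-order terms, a Riesz-transform type singular integral of $b$ and hence $L^{p}$-bounded; on boundary patches one straightens $\p\om$ by a $C^{2}$ diffeomorphism and extends the data by even reflection across the flattened piece, again reducing to a constant-coefficient whole-space singular-integral bound, the commutators and lower-order terms created by localization and flattening being absorbed via the $p=2$ bound together with interpolation. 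Summing the local estimates and using $\|\n\|_{L^{p}}\le C\|\na\n\|_{L^{p}}$ (Poincar\'e--Wirtinger on the quotient, for every $p$) yields the claim.

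Finally, assuming $\div b\in L^{p}$ I would observe that $\n$ then solves $\lap\n=\div b\in L^{p}$ with $\p\n/\p n|_{\p\om}=0$, and apply the $W^{2,p}$ elliptic estimate for this Neumann problem on the $C^{2}$ domain $\om$ (the same Calder\'on--Zygmund machinery carried out at the level of second derivatives) to get $\|\na^{2}\n\|_{L^{p}}\le C(p,\om)(\|\na\n\|_{L^{p}}+\|\div b\|_{L^{p}})$, which combined with the previous bound is precisely $\|\na\n\|_{W^{1,p}}\le C(p,\om)(\|b\|_{L^{p}}+\|\div b\|_{L^{p}})$. The main obstacle is the boundary $L^{p}$ estimate when $p\neq2$: there is no energy identity available near $\p\om$, so one must genuinely invoke the $L^{p}$-boundedness of singular integral operators and control carefully the error terms generated by straightening the curved Neumann boundary; once that is in hand the rest is bookkeeping.
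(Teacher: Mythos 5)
The paper does not prove this lemma at all: it is imported verbatim from Novotn\'y--Stra\v skraba \cite{novo} (Lemma 4.27), so there is no in-paper argument to compare against. Your proposal is the standard proof of that reference's result and is essentially correct: Lax--Milgram on the mean-zero subspace for $p=2$, the Calder\'on--Zygmund localization/flattening/reflection machinery for the gradient estimate $\|\na\n\|_{L^{p}}\le C\|b\|_{L^{p}}$, approximation of $b$ by smooth fields to get existence in $W^{1,p}$, and classical $W^{2,p}$ Neumann regularity once $\div b\in L^{p}$. (One small reading correction: the test class should be $C^{\infty}(\overline{\om})$, as you assume, even though the statement writes $C^{\infty}(\om)$; otherwise the Neumann condition is not encoded.)

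The one step that does not work as literally written is the absorption of the localization and flattening errors ``via the $p=2$ bound together with interpolation.'' For $p>2$ this is fine (on a bounded domain $\|b\|_{L^{2}}\le C\|b\|_{L^{p}}$, and the lower-order norms of $\n$ are then controlled by Sobolev embedding and a short bootstrap), but for $1<p<2$ the $L^{2}$ norm of $b$ is \emph{not} dominated by its $L^{p}$ norm, so the $p=2$ energy bound gives you nothing. The standard repairs are either a duality argument (estimate $\|\na\n\|_{L^{p}}$ by pairing against the solution of the dual Neumann problem with $L^{p'}$ data, $p'>2$) or a compactness--uniqueness contradiction argument to remove the lower-order terms. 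With that replacement the proof is complete; everything else is routine.
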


Our main task in this section is to prove the following theorem.

\begin{theorem} \la{t3.1} Under the conditions \eqref{0}, \eqref{b2} and   \eqref{w3},
  for any fixed $\ep>0$   the problem
\eqref{n1}-\eqref{n1b}  has  a solution  $(\n_{\ep},u_{\ep},\mu_{\ep},c_{\ep})$, such that for all $p\in (1,\infty),$
 \be\la{a19}  0\le \n_{\ep}\in W^{2,p}(\om),\quad \|\n_{\ep}\|_{L^{1}(\om)}=m_{1},\ee
 \be\la{a20} u_{\ep}\in  W_{0}^{1,p}(\om,\r)\cap W^{2,p}(\om,\r),\quad (\mu_{\ep},\,c_{\ep})\in W^{2,p}(\om)\times W^{2,p}(\om).\quad \ee
\end{theorem}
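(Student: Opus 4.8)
The plan is to prove Theorem \ref{t3.1} by the Schaefer fixed point theorem, using Lemma \ref{lem2.1} to eliminate the density from the outset. Given a candidate velocity $v\in W_0^{1,\infty}(\om,\r)$, Lemma \ref{lem2.1} produces a unique $\n=\n(v)\in W^{2,p}(\om)$ solving the regularized continuity equation $\eqref{n1}_{1}$ with $\n\ge0$, $\|\n\|_{L^1}=m_1$, and bounds depending only on $\ep,p,\|v\|_{W^{1,\infty}}$. With this $\n$ frozen, I would next solve the Cahn-Hilliard subsystem $\eqref{n1}_{3}$--$\eqref{n1}_{4}$ for $(\mu,c)$. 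The coupled elliptic system for $(\mu,c)$ with Neumann data is not quite self-contained because the constants in $\mu$ and $c$ are undetermined; following the remark in the introduction, I would augment it with compatible integral normalizations (the system \eqref{n3} alluded to in the text) — e.g.\ fixing $\int \n c = m_2$ coming from the solvability condition for $\lap\mu$, and fixing $\int c$ via the Poincar\'e inequality on $W_n^{1,2}$. Using the bound \eqref{b2} on $H_i,H_i'$, the term $\n\,\p_c f(\n,c)=\n H_1'(c)\ln\n+\n H_2'(c)$ is controlled in terms of $\|\n\|_{L^p}$ and $\|\ln\n\|$, which are already known; a fixed-point or monotonicity argument inside the Cahn-Hilliard block then yields $(\mu,c)\in W^{2,p}\times W^{2,p}$ with $\n$-dependent bounds, invoking elliptic regularity for the Neumann Laplacian twice.

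Next I would define the fixed-point map $\mathcal{T}\colon v\mapsto u$ by solving the linear (in $u$) momentum equation $\eqref{n1}_{2}$ with the coefficients $\n$, $c$, $\mu$ now given and the capillary term rewritten, as in the introduction, as $\left(\n\mu-\n\,\p_c f(\n,c)\right)\na c$ to keep the order of derivatives of $c$ at two. The artificial pressure $\de\n^4+\n^2\p_\n f$ and the term $\ep^4\na\n\cdot\na u$ and $\ep^2\n u$ are all lower-order given-coefficient terms, so $\eqref{n1}_{2}$ with $u|_{\p\om}=0$ is a linear elliptic system; Lax-Milgram gives $u\in H_0^1$, and $L^p$-elliptic regularity (the coefficient $\ep^4\na\n$ lies in $W^{1,p}$, hence in $L^\infty$) bootstraps to $u\in W^{2,p}\cap W_0^{1,p}$, with a bound depending on $\ep,\de,p$ and the data through $\n,c,\mu$. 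To apply Schaefer's theorem I must (a) verify $\mathcal{T}$ is continuous and compact from $W_0^{1,\infty}$ (or a convenient intermediate space, e.g.\ $W_0^{1,q}$ for large $q$, compactly embedded via $W^{2,p}\hookrightarrow\hookrightarrow W^{1,\infty}$ for $p>3$) into itself — continuity following from stability of each of the three solution operators and compactness from the gain of one derivative — and (b) bound the set $\{u : u=t\,\mathcal{T}(u),\ t\in[0,1]\}$ uniformly.

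The uniform bound in (b) is where I expect the main work, and it is the heart of the argument. Testing the $t$-scaled momentum equation by $u$ and the Cahn-Hilliard equation by $\mu$, and using the continuity equation to handle the convective terms (this is where $\ep^2\n$, $\ep^2\n_0$, $\ep\n c$, $\ep\n_0 c_0$ are designed so that the time-discretization structure produces the right signs and the conserved quantities $\int\n=m_1$, $\int\n c=m_2$ are maintained — the Hardy-Poincar\'e type inequality mentioned in the introduction enters here), I would derive an energy identity of the form $\int\big(\lambda_1|\na u|^2+(\lambda_1+\lambda_2)(\div u)^2+|\na\mu|^2\big)\lesssim \int(\n g_1+g_2)\cdot u + (\text{controllable lower-order terms})$. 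The non-monotone pressure is dealt with via the decomposition \eqref{ppp}, $p=\ti p-2\overline H\n\textbf{1}_{\{\n\le k\}}$, where $\ti p$ is monotone in $\n$ and the remainder is bounded. Combining with the $\ep$-diffusion bound $\ep^4\int|\na\n|^2$, Poincar\'e on $u$, and the $L^1$ mass constraint, one closes an a priori estimate $\|u\|_{H^1}\le C(\ep,\de,\text{data})$ independent of $t$; feeding this back through Lemmas \ref{lem2.1} and the Cahn-Hilliard bounds and the momentum regularity yields $W^{2,p}$ bounds on all of $\n,u,\mu,c$. Schaefer's theorem then produces a fixed point $u_\ep$, and the accompanying $\n_\ep,\mu_\ep,c_\ep$ satisfy \eqref{a19}--\eqref{a20}, completing the proof. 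The delicate points I would be most careful about are the precise choice of integral normalizations making the Cahn-Hilliard block uniquely solvable, and verifying the signs in the energy estimate so that the $\ep$-terms genuinely help rather than hurt.
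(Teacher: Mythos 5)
Your high-level strategy (Schaefer fixed point, Lemma \ref{lem2.1} for the density, rewriting $\div\mathbb{S}_c$ as $(\n\mu-\n\,\p_c f)\na c$, energy estimate with the decomposition \eqref{ppp}) is aligned with the paper, but there is a structural choice you make that creates a genuine gap. The paper takes the fixed-point space to be $\mathcal{W}=W_0^{1,\infty}\times W_n^{1,p}\times W_n^{1,p}$ and puts \emph{all three} unknowns $(u,\mu,c)$ into the Schaefer iteration: given $(v,\ti\mu,\ti c)$, the auxiliary system \eqref{n3} has right-hand sides $F^1,F^2,F^3$ that depend only on the \emph{given} data $(v,\ti\mu,\ti c)$, so it is simply three decoupled linear Poisson/Lam\'e problems. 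You instead propose to solve the coupled Cahn-Hilliard block $\eqref{n1}_3$--$\eqref{n1}_4$ for $(\mu,c)$ with $\n$ and $u$ frozen, and then run Schaefer on $u$ alone. But with $c$ as an unknown, that block is genuinely nonlinear (through $\p_c f(\n,c)=H_1'(c)\ln\n+H_2'(c)$), so "a fixed-point or monotonicity argument inside the Cahn-Hilliard block" is not a remark but a second nontrivial existence/uniqueness/continuous-dependence theorem that you would have to prove before the map $\mathcal{T}:v\mapsto u$ is even well defined. This is exactly the extra work the paper's joint formulation is designed to avoid, and leaving it at that level of generality is a real gap.

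A second, more concrete error is in the integral normalizations. You propose "fixing $\int\n c=m_2$ coming from the solvability condition for $\lap\mu$," but this is not the correct compatibility condition once the $\ep$-regularizations are present. The Neumann problem $\lap\mu=F^2$ requires $\int F^2=0$, and integrating $F^2=\ep\n\ti c+\n v\cdot\na\ti c-\ep\n_0c_0$ against the regularized continuity equation produces the $\ep$-correction terms in $\eqref{n3}_4$:
\[
\int\n\ti c=m_2+\ep\int(\n_0-\n)\ti c-\ep^3\int\na\n\cdot\na\ti c,
\]
together with $\int\n\ti\mu=\int\n\,\p_c f(\n,\ti c)$ for the $c$-equation. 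If you impose the clean constraint $\int\n c=m_2$ instead, then generically $\int F^2=\ep^4\int\na\n\cdot\na\ti c+\ep^2\int(\n-\n_0)\ti c\neq0$ and the Neumann problem has no solution, so your inner block is not solvable. Likewise "fixing $\int c$ via the Poincar\'e inequality" does not pin down the additive constant in $c$; Poincar\'e gives estimates, not a normalization. The paper actually needs two sets of conditions, $\eqref{n3}_4$ (compatibility, imposed on the frozen data $\ti c,\ti\mu$) and $\eqref{n3}_5$ (uniqueness, imposed on the unknowns $\mu,c$), which coincide only after the fixed point is found. The rest of your a priori estimate sketch (testing by $\frac12|u|^2$, $u$, $\mu$, $c$, using $\|\n\|_{L^1}=m_1$ and the decomposition \eqref{ppp}) is in the right spirit, but it is a substantial computation that your proposal does not attempt in detail.
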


 \begin{proof}
We will  prove  Theorem \ref{t3.1} by    the fixed point theorem.  Setting
\be\la{a11}  (v,\ti{\mu},\ti{c})\in \mathcal{W}:=W_{0}^{1,\infty}(\om,\r)\times W_{n}^{1,p}(\om)\times W_{n}^{1,p}(\om),\ee
where  $W_{n}^{1,p}(\om)=\{f\in W^{1,p}(\om):\,\,\ \,\frac{\p f} {\p n}|_{\p\om}=0\}$  and    $W_{0}^{1,\infty}$ is  from  \eqref{a5}.
Let us consider   the   elliptic system    of  $(u,\mu,c)$:
 \begin{equation}\label{n3}
\left\{\ba
&{\rm div} \mathbb{S}_{ns}=F^{1}(v,\ti{\mu},\ti{c})\\&
\quad\quad:=\ep^{2} \n v+{\rm div} (\n v\otimes v)+\na (\de\n^{4}+\n^{2}\frac{\p f(\n,\ti{c})}{\p\n})+\ep^{4}\na \n\cdot \na v \\
&\qquad\qquad\qquad\qquad\qquad\qquad+\n \frac{\p f(\n, \ti{c})}{\p \ti{c}}\na \ti{c}-\n\ti{\mu}\na \ti{c}-\n g_{1}-g_{2},\\
&\lap \mu= F^{2}(v,\ti{\mu},\ti{c}):=\ep \n\ti{c}+\n v\cdot \na \ti{c}-\ep \n_{0}c_{0},\\
&\lap c=F^{3}(v,\ti{\mu},\ti{c}):=\n  \frac{\p f(\n, \ti{c})}{\p \ti{c}}-\n\ti{\mu},\\
&  \int  \n\tilde{c}=m_{2}+\ep\int(\n_{0}-\n)\tilde{c}-\ep^{3} \int \na\n \cdot\na \tilde{c},\quad \int \n \tilde{\mu}=\int \n \frac{\p f(\n,\tilde{c})}{\p \tilde{c}},\\
&{ \int  \n c=m_{2}+\ep\int(\n_{0}-\n)c-\ep^{3}\int \na\n \cdot \na c,\quad \int \n \mu=\int \n \frac{\p f(\n,c)}{\p c},}\\
& u=0,\,\,\,\frac{\p \mu}{\p n}=0,\,\,\,\, \frac{\p c}{\p n}=0,\quad {\rm on}\,\,\,\p\om,\ea \right.
\end{equation}    where %the constants $C_{1}(\ep),\,C_{2}(\ep)$ are given, function
$\n=\n(v)$ is determined   in   Lemma \ref{lem2.1}.
For   any given  $(v,\ti{\mu},\ti{c})$ satisfying $\eqref{n3}_4$,   the system \eqref{n3} has  a solution \be\la{nn1} (u,\mu,c):= A[(v,\ti{\mu},\ti{c})].\ee
Applying  the   $L^{p}$ regularity estimates  (cf.\cite{gt}),  we have   $$\|(u,\mu,c)\|_{W^{2,p}} \le C \|(F^{1},F^{2},F^{3})\|_{L^{p}}<\infty.$$

\begin{remark}
The  condition $\eqref{n3}_4$ guarantees  $\int F^{2}=\int F^{3}=0$  which is compatible with the  Neumann  boundary conditions in \eqref{n3}.
In fact,  by this condition together with \eqref{w3} and Lemma \ref{lem2.1},  one has
\bnn\ba \int F^{2} &=\int \left(\ep \n\ti{c}+\n v\cdot \na \ti{c}-\ep \n_{0}c_{0}\right)
 =\int\left(\ep \n\ti{c}+\n v\cdot \na \ti{c}  \right)-\ep m_{2}\\
&=\ep\int  \n\ti{c}+\ep^{4}\int\na \n \cdot \na  \ti{c}+\ep^{2}\int (\n-\n_{0})\tilde{c}-\ep m_{2}=0.\ea\enn
The second equality of the condition $\eqref{n3}_4$ yields $\int F^{3}=0$ immediately.
{Finally, we  note that the condition $\eqref{n3}_{5}$ is for the uniqueness of $\mu$ and $c.$ The two conditions $\eqref{n3}_{4}$ and $\eqref{n3}_{5}$ coincide after the fixed point argument.}
 \end{remark}

\begin{pro}\la{c} Suppose that $(u,\mu,c)$ is  a solution to \eqref{n3} and the operator $A\,:\mathcal{W}\mapsto \mathcal{W}$ is defined  in \eqref{nn1}.
   Then,  the  set of   possible  fixed points 
\be\la{a4} \left\{(u,\mu,c)\in \mathcal{W}   \left|\ba&(u,\mu,c):= \sigma A[(u,\mu,c)]\\
&   {\rm for\,\,some}\,\,\,\sigma \in (0,1]\,\,{\rm and}\quad  \n=\n(u)
\ea\right.
\right\}\ee  is bounded, where $\mathcal{W}$ is defined in \eqref{a11}.
\end{pro}

A  standard argument shows  that    $A$   is  compact and   continuous in $\mathcal{W}$.  Therefore,   using Proposition  \ref{c},   we conclude from the  {Schaefer}  Fixed Point  Theorem (Chap. 9, Th. 4 in  \cite{evans1})     that  $ (u,\mu,c):= A[(u,\mu,c)]$ with $\n=\n(u)$.
This  and   Lemma \ref{lem2.1}  guarantee the existence of solution   $(\n_{\ep},u_{\ep},\mu_{\ep},c_{\ep})$  to    \eqref{n1}-\eqref{n1b}.  Consequently,        \eqref{a19}  follows directly from \eqref{a17}.

\smallskip
It remains to prove  Proposition \ref{c} as well as \eqref{a20}.
\smallskip

{\bf Proof of Proposition \ref{c}.}
It suffices to   show that there is  a   constant   $M<\infty$  independent of    $\si$ such that
  \be\la{a8} \|(u,\mu,c)\|_{\mathcal{W}} <M,\ee
where  $(\n,u,\mu,c)$ solves  \begin{equation}\label{n4}
\left\{\ba
&  \ep^{2} \n + {\rm div}(\n u) =\ep^{4} \lap \n +\ep^{2}\n_{0},\\
&{\rm div} \mathbb{S}=\sigma F^{1}(u,\mu,c),\\
&\lap \mu=\sigma F^{2}(u,\mu,c), \\
&\lap c=\sigma F^{3}(u,\mu,c),\\
& \int  \n c=m_{2}+\ep\int(\n_{0}-\n)c-\ep^{3}\int \na\n \cdot \na c,\quad \int \n \mu=\int \n \frac{\p f(\n,c)}{\p c},\\
&u=0,\quad \frac{\p \n}{\p n}=0,\,\,\,\frac{\p \mu}{\p n}=0,\,\,\,\frac{\p c}{\p n}=0,\quad {\rm on}\,\,\,\p\om.\ea \right.
\end{equation}
We divide the proof   into several steps.

 {{\it Step 1.}}
It follows directly from   $\eqref{n4}_{1}$  that  $\|\n\|_{L^{1}}=m_{1}.$
Multiplying  $\eqref{n4}_{1}$ by $\frac{1}{2}|u|^{2}$ and  $\eqref{n4}_{2}$ by $u$  respectively,  we get
  \be\la{1.1}\ba &\frac{\ep^{2}\si}{2}\int  (\n+\n_{0})|u|^{2}  + \si\int  u\cdot \na \left(\de\n^{4}+\n^{2}\frac{\p f}{\p \n}\right) \\
  &\quad +\int \mathbb{S}_{ns}:\na u+\si\int  \n \frac{\p f}{\p c} (u\cdot\na) c-\si\int \n\mu  (u\cdot\na) c =\si\int \left(\n g_{1}+g_{2}\right) \cdot u.\ea\ee
Using  $\eqref{n4}_{1},$  one has
  \bnn\ba &\int  u\cdot\na \left(\de\n^{4}+\n^{2}\frac{\p f}{\p \n}\right)\\
&=\int \n u\cdot\na \left( \frac{4\de}{3}\n^{3}+\frac{\p (\n f)}{\p \n}\right)+\int u\cdot \left(\na \n\frac{\p (\n f)}{\p \n} -\na (\n f)\right) \\
   &=-\int \div (\n u) \left( \frac{4\de}{3}\n^{3}+\frac{\p (\n f)}{\p \n}\right) -\int \n \frac{\p f}{\p c}(u\cdot \na) c\\
  &=\ep^{2}\int \left(\frac{4\de}{3}\n^{3}+\frac{\p (\n f)}{\p \n}\right)(\n-\n_{0}) +\ep^{4} \int \left(4\de\n^{2}+\frac{\p^{2} (\n f)}{\p \n^{2}}\right)|\na \n|^{2}\\
  &\quad+\ep^{4} \int  \frac{\p^{2} (\n f)}{\p \n\p c}\na \n\cdot \na c-\int \n \frac{\p f}{\p c}(u\cdot \na) c. \ea\enn
Substitute the above  into  \eqref{1.1}   to obtain
   \be\la{1.4}\ba & \frac{\ep^{2}\si}{2}\int (\n+\n_{0})|u|^{2}
 +\ep^{2}\si\int\left(\frac{4\de}{3}\n^{3}+ \frac{\p(\n f)}{\p\n}\right)(\n-\n_{0}) \\
 &\quad +\int\mathbb{S}_{ns}:\na u-\si\int \n\mu  (u\cdot\na) c +\ep^{4}\si \int\left(4\de\n^{2}+\frac{\p^{2} (\n f)}{\p \n^{2}}\right)|\na \n|^{2}\\
  &=\si\int  \left(\n g_{1}+g_{2}\right)\cdot u-\ep^{4} \si\int \frac{\p^{2}(\n f)}{\p\n \p c}\na \n\cdot \na c .\ea\ee
 Next,  multiplying   $\eqref{n4}_{3}$ by $\mu$ and $\eqref{n4}_{4}$ by $c$  gives rise to
 \be\la{1.3}\ba &\int  |\na \mu|^{2}+ \si\int \n\mu (u\cdot\na) c+\ep\int  |\na c|^{2}=\ep\si\int  \n_{0}c_{0}\mu-\ep \si\int \n\frac{\p f}{\p c}c.\ea\ee
Combining   \eqref{1.4}  with  \eqref{1.3}  leads to
  \be\la{1.41}\ba & \frac{\ep^{2}\si}{2}\int  (\n+\n_{0})|u|^{2}
+\ep^{2} \si\int \left(\frac{4\de}{3}\n^{3}+ \frac{\p (\n f)}{\p \n}\right)(\n-\n_{0}) \\
 &\quad  +\ep\int |\na c|^{2}+\int  |\na \mu|^{2}  +\int \mathbb{S}_{ns}:\na u +\ep^{4}\si \int  \left(4\de\n^{2}+\frac{\p^{2} (\n f)}{\p \n^{2}}\right)|\na \n|^{2} \\
  &=\si\int \left(\n g_{1}+g_{2}\right)\cdot u-\ep^{4} \si\int \frac{\p^{2} (\n f)}{\p \n\p c}\na \n\cdot \na c +\ep\si\int \n_{0}c_{0}\mu  -\ep\si\int \n \frac{\p f}{\p c} c.\ea\ee
{We first assume that} \be\la{ee1} {H_{1}(c)\ge1\quad {\rm for\,\,all}\quad c\in \mathbb{R}.}\ee (See Remark \ref{r2.1} for the opposite case). Then, from \eqref{b1}  we compute
\be\la{pp} 4\de\n^{2}+\frac{\p^{2} (\n f)}{\p \n^{2}}\ge 4\de\n^{2}+ \g(\g-1)\n^{\g-2}+ \n^{-1}\ge0.\ee
Therefore, \bnn \ba&\int  \left(4\de\n^{2}+\frac{\p^{2} (\n f)}{\p \n^{2}}\right)|\na \n|^{2}\ge  \int \left( \de|\na \n^{2}|^{2}+4(\g-1)\g^{-1}|\na \n^{\frac{\g}{2}}|^{2}+4 |\na \sqrt{\n}|^{2}\right)\ea\enn
and
\bnn\ba\int \left(\frac{4\de}{3}\n^{3}+\frac{\p (\n f)}{\p \n}\right)(\n-\n_{0})\ge \int \left(\frac{\de}{3}\n^{4}+ \n f(\n,c)\right)-\int \left(\frac{\de}{3}\n_{0}^{4}+\n_{0} f(\n_{0},c)\right).\ea\enn
Taking the   last two inequalities into accounts, we estimate
 \eqref{1.41}  as
  \be\la{1.8}\ba & \ep^{2}\int (\n+\n_{0})|u|^{2}
 +\ep^{2} \si\int \frac{\de}{3}\n^{4} +\ep^{2}\si\int \n  f(\n,c)\\
 &\quad+\ep\int |\na c|^{2}  +\int |\na \mu|^{2}  +\int \mathbb{S}:\na u\\
 &\quad+\ep^{4}\si \int \left(\de \left|\na \n^{2}\right|^{2}+ 4(\g-1)\g^{-1}|\na \n^{\frac{\g}{2}}|^{2}+ 4 |\na \sqrt{\n}|^{2}\right) \\
  &\le \si\int \left(\n g_{1}+g_{2}\right) \cdot u+\ep^{2} \si\int \frac{\de}{3}\n_{0}^{4} -\ep^{2}\si\int \n_{0} f(\n_{0},c)\\
  &\quad-\ep^{4} \si\int  \frac{\p^{2} (\n f)}{\p \n\p c}\na \n\cdot \na c + \ep\si\int \n_{0} c_{0}\mu -\ep\si\int \n \frac{\p f}{\p c} c.\ea\ee
\begin{remark} \la{r2.1} {If \eqref{ee1} fails,  we  follow the idea in \cite{Fei} and  express the  $f(\n,c)$ in \eqref{b1} as} \bnn  f(\n,c) = \underbrace{\n^{\g-1}+ \left(H_{1}(c)+2\overline{H} \textbf{1}_{\{\n\le k\}}\right)\ln \n+H_{2}(c)}_{\ti{f}(\n,c)}-2\ln \n\overline{H}\textbf{1}_{\{\n\le k\}}, \enn
where $\overline{H}$ is taken from \eqref{b2}, and the constant  $k$   is large but fixed.
Let us   decompose   the pressure function    as
 \be\la{ext} p=\n^{2}\frac{\p f}{\p\n}=\n^{2}\frac{\p \ti{f}}{\p\n}-2\overline{H}\n\textbf{1}_{\{\n\le k\}}=\ti{p}-2\overline{H}\n\textbf{1}_{\{\n\le k\}}\ee
and replace $p=\n^{2}\frac{\p f}{\p\n}$ with $\ti{p}=\n^{2}\frac{\p \ti{f}}{\p\n}$  in \eqref{1.1}.
 We claim that  \eqref{pp} is  also  valid.  To see this,   if  $\n\le k,$ we have
  \bnn\ba  4\de\n^{2}+\frac{\p^{2} (\n \ti{f})}{\p \n^{2}}&= 4\de\n^{2}+ \g(\g-1)\n^{\g-2}+(H_{1}+2\overline{H})\n^{-1}\\
  &\ge4\de\n^{2}+ \g(\g-1)\n^{\g-2}+ \overline{H} \n^{-1}>0,\ea\enn
owing to $ H_{1}(c) +2\overline{H}>\overline{H}$;   while  if   $\n>k$,
\bnn \ba 4\de\n^{2}+\frac{\p^{2} (\n \ti{f})}{\p \n^{2}}
&=4\de\n^{2}+\frac{\p^{2} (\n f)}{\p \n^{2}}\\
&\ge 4\de\n^{2}+ \g(\g-1)\n^{\g-2}+ H_{1}(c) \n^{-1}\ge 4\de\n^{2} -\overline{H}\n^{-1}>2\de\n^{2}>0,\ea\enn
as long as $k=k(\de, \overline{H})$ is taken to be large enough.
However,  the following   extra  term  will be  induced  by    the decomposition  \eqref{ext},
\bnn \int  2\overline{H}\n\textbf{1}_{\{\n\le k\}}\div u.\enn
Fortunately,  it can be bounded by $\|\na u\|_{L^{2}}$ because $\n \textbf{1}_{\{\n\le k\}}$  is bounded.
Without loss of generality,  in what follows,  we always assume that, for all $c\in \mathbb{R},$ $H_{1}(c)$ is positive and bounded from below.
\end{remark}

 {{\it Step 2.}}
Let us deal with    the terms   on the right-hand side of \eqref{1.8}.
Thanks to  \eqref{b1},  \eqref{b2}, \eqref{0}, \eqref{w3},  and  the  H\"{o}lder   inequality,    the first three terms   satisfy
\be\la{3.31a}\ba& \si\int \left(\n g_{1}+g_{2}\right)\cdot u+\ep^{2}\si\int  \frac{\de}{3}\n_{0}^{4}+\ep^{2}\si\int \n_{0}f(\n_{0},c)\\
&\le C\|u\|_{L^{6}}  \|\n\|_{L^{\frac{6}{5}}}\|g_{1}\|_{L^{\infty}}+C\|u\|_{L^{6}}\|g_{2}\|_{L^{\infty}}+C  \\
&\le  C\left(1+\|\n\|_{L^{\frac{6}{5}}}^{2}\right)+\frac{\lambda_{1}}{2}\|\na u\|_{L^{2}}^{2}.\ea\ee
Throughout  this section,  the positive constants $C,C_{i}\,(i=1,2,\cdot\cdot\cdot)$   may depend  on $g_{1}$, $g_{2}$, $ \lambda_{1}$, $ m_{1}$, $m_{2}$,
$\de$, $\g$,  $\overline{H}$, $|\om|$,  but not on $\ep$ or $\si$.

Using \eqref{b1} and   \eqref{b2} again, one has
\be\la{3.32a}\ba \ep^{4}\si\int \frac{\p^{2} (\n f)}{\p \n\p c}\na \n\cdot \na c&\le C\ep^{4}\si\|\na c\|_{L^{2}} \| (1+\ln\n)\na \n\|_{L^{2}}\\
&\le  \frac{\ep}{4}\|\na c\|_{L^{2}}^{2} + C_{1}\ep^{7}\si\int \left(\de\left|\na \n^{2}\right|^{2}  + 4 |\na \sqrt{\n}|^{2}\right).\ea\ee
It follows from  $\eqref{n4}_{4}$  that
 \be\la{bb9}  \int  \n \mu= \int  \left(\n \frac{\p f}{\p c}+\si^{-1}  \lap c\right)=\int \n \frac{\p f}{\p c}  \le C \left( \|\n\ln\n\|_{L^{1}} +1\right).\ee
 Then we have, from \eqref{bb9} together with  \eqref{w3} and the Poincar\'e  inequality,
\be\ba\la{bb12}
\int \mu%=&\frac{1}{\n_{0}|\om|}\int m_{1}\mu
&=\frac{1}{\n_{0}}\int \n\left(\frac{1}{|\om|}\int \mu\right)\\
&=\frac{1}{\n_{0}}\int \n \mu-\frac{1}{\n_{0}}\int \n\left(\mu-\frac{1}{|\om|}\int \mu\right)\\
&\le C \left( \|\n\ln\n\|_{L^{1}} +1\right)+C \|\n\|_{L^{\frac{6}{5}}}\|\na \mu\|_{L^{2}},\ea\ee
which implies  \bnn\ba
 \|\mu\|_{L^{1}}
&\le C \|\na \mu \|_{L^{2}} + C \left( \|\n\ln\n\|_{L^{1}} +1\right)+C \|\n\|_{L^{\frac{6}{5}}}\|\na \mu\|_{L^{2}}\\
&\le C(1+\|\na \mu \|_{L^{2}})(1 + \|\n\|_{L^{\frac{6}{5}}}),\ea\enn where we have  used   $\|\n\ln\n\|_{L^{1}}\le C+\|\n\|_{L^{\frac{6}{5}}}$, owing  to the interpolation and  $\|\n\|_{L^{1}}=m_{1}.$
Thus,   \be\la{3.30}\ba
 \|\mu\|_{L^{p}}\le C\left(1+\|\na \mu \|_{L^{2}})(1+ \|\n\|_{L^{\frac{6}{5}}}\right),\quad \forall\,\,\,p\in [1,6].\ea\ee
 Thanks to \eqref{w3} and   $\eqref{n4}$,  one has
\be\la{w7} \ba  \int  \n c &= m_{2}+\ep\int  (\n_{0}-\n) c+\ep^{3}\int c\lap \n  \\
 &= m_{2}+\ep\int  (\n_{0}-\n) c+\ep^{3}\int   \n\lap  c\\
&\le C +C\ep\|\n\|_{L^{\frac{6}{5}}}\|\na c\|_{L^{2}} +C\ep^{3} ( \|\n\|_{L^{\frac{12}{5}}}^{2}\|\mu\|_{L^{6}}+\|\n^{2}\ln\n\|_{L^{1}}),\ea\ee
where we have used the estimate: %the inequality sign owes to
\bnn\ba \int (\n-\n_{0})c=\int \n c-\frac{1}{|\om|}\int m_{1}c=\int \n\left(c-\frac{1}{|\om|}\int c\right)\le C\|\n\|_{L^{\frac{6}{5}}}\|\na c\|_{L^{2}}.\ea\enn
With the aid of \eqref{w7},  the same method as   \eqref{bb12} yields, \be\ba\la{bb13}
\int c&=\frac{1}{\n_{0}}\int \n c -\frac{1}{\n_{0}}\int \n\left(c-\frac{1}{|\om|}\int c\right)\\
&\le C  +C \|\n\|_{L^{\frac{6}{5}}}\|\na c\|_{L^{2}}+C\ep^{3} ( \|\n\|_{L^{\frac{12}{5}}}^{2}\|\mu\|_{L^{6}}+\|\n^{2}\ln\n\|_{L^{1}}).\ea\ee
Thus, for  $p\in [1,6],$
\be\la{3.30c}\ba
 \|c\|_{L^{p}}
&\le   C (1+\|\n\|_{L^{\frac{6}{5}}})(1+\|\na c\|_{L^{2}})+C\ep^{3} ( \|\n\|_{L^{\frac{12}{5}}}^{2}\|\mu\|_{L^{6}}+\|\n^{2}\ln\n\|_{L^{1}}).\ea\ee
Having   \eqref{3.30} and \eqref{3.30c} in hand,
we can make the following computation and estimate,
 \be\la{3.30d}\ba& \ep\si\int  \n_{0} c_{0} \mu -\ep\si\int \n \frac{\p f}{\p c} c \\
& \le C\si\ep\left( \|\mu\|_{L^{1}}+\|\n(\ln \n+1)\|_{L^{\frac{6}{5}}}\|c\|_{L^{6}}\right) \\
&\le  C\si\ep \left(1+ \|\na \mu\|_{L^{2}}\right)\left(1+\|\n\|_{L^{\frac{6}{5}}}\right)  + C\si\ep\left(1+\|\n \ln \n \|_{L^{\frac{6}{5}}}^{2}\right)\left(1+  \|\na c\|_{L^{2}} \right)\\
&\quad+  C\si \ep^{4}\left(1+\|\n \ln \n \|_{L^{\frac{6}{5}}}\right)( \|\n\|_{L^{\frac{12}{5}}}^{2}\|\mu\|_{L^{6}}+\|\n^{2}\ln\n\|_{L^{1}})\\
&\le C+\frac{1}{2} \|\na \mu\|_{L^{2}}^{2}+\frac{\ep}{4} \|\na c\|_{L^{2}}^{2}\\
&\quad+C \si\left(\ep\|\n\ln \n\|_{L^{\frac{6}{5}}}^{4}+\ep^{4}\|\n\ln \n\|_{L^{\frac{6}{5}}}\|\n^{2}\ln \n\|_{L^{1}}+\ep^{8}\|\n\ln \n\|_{L^{\frac{6}{5}}}^{4}\|\n\|_{L^{\frac{12}{5}}}^{4}\right).\ea\ee
Then, we  compute
\bnn\ba  & \si\left(\ep\|\n\ln \n\|_{L^{\frac{6}{5}}}^{4}+\ep^{4}\|\n\ln \n\|_{L^{\frac{6}{5}}}\|\n^{2}\ln \n\|_{L^{1}}+\ep^{8}\|\n\ln \n\|_{L^{\frac{6}{5}}}^{4}\|\n\|_{L^{\frac{12}{5}}}^{4}\right)\\
&\le C+\frac{\ep^{2}\si}{8}\|\n\ln \n\|_{L^{\frac{6}{5}}}^{8}+\frac{\ep^{2}\si}{8}\|\n\|_{L^{4}}^{4}+ \si\ep^{8} \|\n\ln \n\|_{L^{\frac{6}{5}}}^{4}\|\n\|_{L^{\frac{12}{5}}}^{4}\\
&\le C+\frac{\ep^{2}\si}{4}\|\n\|_{L^{4}}^{4}+ \si\ep^{8} \|\n\ln \n\|_{L^{\frac{6}{5}}}^{4}\|\n\|_{L^{\frac{12}{5}}}^{4}\\
&\le C+\frac{\ep^{2}\si}{4}\|\n\|_{L^{4}}^{4}+ \si\ep^{8} \|\n^{2}\|_{L^{6}}^{\frac{20}{11}}\\
&\le C(\de)+\frac{\ep^{2}\si}{2}\|\n\|_{L^{4}}^{4}+ \frac{\si\ep^{4}\de}{2} \|\na(\n^{2})\|_{L^{2}}^{2},\ea\enn
where in the third inequality sign we have used
\bnn \|\n\ln \n\|_{L^{\frac{6}{5}}}^{4}\|\n\|_{L^{\frac{12}{5}}}^{4}\le C\|\n\|_{L^{12}}^{\frac{38}{11}}\le C+\|\n^{2}\|_{L^{6}}^{\frac{20}{11}},\enn owing   to   interpolation and the fact $\|\n\|_{L^{1}}=m_{1}.$
By the above estimates, substituting   \eqref{3.31a}-\eqref{3.32a} and \eqref{3.30d}  into  \eqref{1.8} concludes
  \bnn \ba &\ep^{2}\si\|\n\|_{L^{4}}^{4}+ \|\na u \|_{L^{2}}^{2}+\|\na \mu\|_{L^{2}}^{2}+\ep\|\na c\|_{L^{2}}^{2} + \ep^{4}\si\int \left( \de\left|\na \n^{2}\right|^{2} + 4 |\na \sqrt{\n}|^{2}\right)\\& \le C+C\|\n\|_{L^{\frac{6}{5}}}^{2},\ea\enn
which, along with  \eqref{3.30} and \eqref{3.30c}, implies \be\la{a26} \ba& \ep^{2} \|\n\|_{L^{4}}^{4}+\| u \|_{H^{1}_{0}}^{2}+ \ep\|c\|_{H^{1}}^{2}+\|\mu\|_{H^{1}}^{2}  +\ep^{4}  \|\left|\na \n^{2}\right|+ |\na \sqrt{\n}|\|_{L^{2}}^{2}   \le   C+C\|\n\|_{L^{\frac{6}{5}}}^{2}.
\ea\ee

 \smallskip
 {{\it Step 3.}}
 By  \eqref{a26},  it  is clear that
 \be\la{a26av} \ba& \|\n\|_{L^{4}}^{4}+\| u \|_{H^{1}_{0}}^{2} +\|\mu\|_{H^{1}}^{2} + \|c\|_{H^{1}}^{2} +  \| \left|\na \n^{2}\right|+ |\na \sqrt{\n}|\|_{L^{2}}^{2} \le C(\ep).\ea\ee
From  \cite[Lemma 3.17]{novo} we take the Bogovskii operator
\be\la{05} \mathcal{B}=[ \mathcal{B}_{1}, \mathcal{B}_{2}, \mathcal{B}_{3}]:\,\,\,\left\{f\in L^{p}\,\,|\int f=0\right\}\mapsto W_{0}^{1,p}(\om),\quad p\in (1,\infty).\ee   Then, $ \div \mathcal{B}(f)=f$  a.e. in $\om$, and   moreover,
 \be\la{04}\|\na \mathcal{B}(f)\|_{L^{p}}\le C\|f\|_{L^{p}},\quad
  \|\mathcal{B}(f)\|_{L^{p}}\le C\|g\|_{L^{p}},\ee
  where  $f=\div g$ and  $g\in L^{p}$ with $g\cdot n|_{\p\om}=0$.
Furthermore,  we  write     $\eqref{n4}_{1}$ as  the equivalent form
\be\la{a30}  \ep^{4}\lap \n=\div (\n u+\ep^{2}\mathcal{B}(\n-\n_{0})).\ee
Applying Lemma \ref{lem2.3}  to   \eqref{a30},    using \eqref{a26av} and  \eqref{04},   we find
   \be\la{a31}\ba \|\na \n\|_{L^{4}}&\le\|\n u+ \ep^{2}\mathcal{B}(\n-\n_{0})\|_{L^{4}} \\
  &\le\|\n u\|_{L^{4}}  +\|\na \mathcal{B}(\n-\n_{0}) \|_{L^{4}} \\
  &\le \|u\|_{L^{6}} \|\n^{2}\|_{L^{6}}^{\frac{1}{2}}+\|\n-\n_{0}\|_{L^{4}}\le   C(\ep),\ea\ee
and hence,
 \be\la{a33} \ba\|\n \|_{H^{2}}&\le C\|\div (\n u+\ep^{2}\mathcal{B}(\n-\n_{0}))\|_{L^{2}}\\
 & \le  \|u\cdot\na \n +\n \div u\|_{L^{2}}  +\|\div \mathcal{B}(\n-\n_{0})\|_{L^{2}}  \le C(\ep).\ea\ee
Combining   \eqref{a26av} with   \eqref{a33}  gives
\bnn \|\sigma F^{1}(u,\mu,c)\|_{L^{\frac{3}{2}}}+\| \sigma F^{2}(u,\mu,c)\|_{L^{6}}+\|\sigma F^{3}(u,\mu,c)\|_{L^{6}}\le C(\ep).\enn
By   $L^{p}$ regularity estimates,  we obtain
\be\la{a13}  \|u\|_{W^{2,\frac{3}{2}}}+\| \mu\|_{W^{2,6}}+\| c\|_{W^{2,6}}\le C(\ep).\ee
From  \eqref{a13}  we have   $\|\si F^{1}(u,\mu,c)\|_{L^{6}}\le C(\ep)$, and thus $ \|u\|_{W^{2,6}}\le C(\ep).$
By  a bootstrap procedure,
\bnn\|(u,\mu,c)\|_{W^{2,p}}\le C(\ep),\quad \forall \,\,\, p\in (1,\infty).\enn This completes the proof of   Proposition \ref{c} and \eqref{a20}. \end{proof}

\bigskip

\section{$\ep$-Limit procedure for  the approximation solutions}% in Theorem \ref{t3.1}}

In this section, we shall take the $\ep$-limit procedure and prove the following result.

 \begin{theorem} \la{t4.1} Under the same   assumptions  as in Theorem \ref{t3.1},    the  system
 \begin{equation}\label{n6}
\left\{\ba
&\div (\n u)=0,\\
&\div(\n u\otimes u)+\na \left(\de \n^{4} + \n^{2} \frac{\p f}{\p \n}\right)
 =\div\mathbb{S}_{ns}+\n\mu \na c- \n \frac{\p f}{\p c} \na c+\n g_{1}+g_{2},\\
 &\n u \cdot \na c=\lap \mu,\\
&\n\mu= \n \frac{\p f}{\p c}-\lap c,\ea \right.
\end{equation}
with  the boundary conditions \eqref{1a}   admits a weak solution  $(\n,u,\mu,c)$  such that
\be\la{e7b}\int \n =m_{1}, \quad\int \n c =m_{2},\ee
 \be\ba\la{e7c} 0\le \n\in L^{5}(\om),\,\,\,u\in  H_{0}^{1}(\om,\r),\,\,\,(\mu,\,c)\in H^{1}(\om)\times H^{1}(\om).\ea\ee
% In addition,   the following  integral quantities   hold true:
Moreover, $\forall\,\,\,\Phi\in C_{0}^{\infty}(\om,\r)$ and $\phi\in C^{\infty}(\overline{\om})$,
\be\la{e5}\ba &\int \left(\de \n^{4} +\n^{2}\frac{\p f}{\p \n}\right)\div \Phi\\
&=\int \left(\mathbb{S}_{ns}-\n u\otimes u\right) :\na \Phi+\int \left(\n \frac{\p f}{\p c}\na c-\n\mu \na c-\n g_{1}-g_{2}\right)\cdot \Phi, %\quad \forall\,\,\,\Phi\in C_{0}^{\infty}(\om,\r),
\ea\ee and
\be\la{e4}\int \n u\cdot \na c \phi+\int \na \mu\cdot \na \phi=0,\quad \int  \n\mu\phi -\n \frac{\p f}{\p c}\phi=\int \na c\cdot \na \phi; %\quad \forall\,\,\,\phi\in C_{0}^{\infty}(\om);
\ee
when $(\n,u)$  is prolonged by zero outside $\Omega$,
 %After extended to   $\r\backslash \om$  by zero,  the function $(\n,u)$ satisfies
 \be\la{e7} \int_{\r}b(\n) u\cdot \na \phi=\int_{\r} \phi\left(b'(\n)\n-b(\n)\right)\div u, \ee
where $b(z)=z$, or  $b(z)\in C^{1}([0,\infty))$ with  $b'(z)=0$ if $z$ is large; and
the following energy inequality holds:
\be\la{e7a}\int \left(\lambda_{1}|\na u|^{2}+(\lambda_{1}+\lambda_{2})(\div u)^{2}+|\na \mu|^{2}\right)\le \int \left(\n g_{1}+g_{2}\right)\cdot u.\ee
\end{theorem}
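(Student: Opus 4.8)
The plan is to pass to the limit $\ep\to0$ in the approximate solutions $(\n_\ep,u_\ep,\mu_\ep,c_\ep)$ produced by Theorem \ref{t3.1}. First I would extract uniform-in-$\ep$ bounds from the energy identity \eqref{1.8} (now with $\si=1$): the crucial point is that after absorbing the force terms as in \eqref{3.31a} one controls $\|\na u_\ep\|_{L^2}$, $\|\na\mu_\ep\|_{L^2}$, and the extra $\ep$-weighted quantities $\ep\|\na c_\ep\|_{L^2}^2$, $\ep^4\|\na\sqrt{\n_\ep}\|_{L^2}^2$, $\ep^4\de\|\na\n_\ep^2\|_{L^2}^2$, $\ep^2\de\|\n_\ep\|_{L^4}^4$, all bounded by $C(1+\|\n_\ep\|_{L^{6/5}}^2)$. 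To close these I need a standalone bound on $\n_\ep$. This is the first genuine step: combine the Bogovskii operator as in \eqref{a30}–\eqref{a33} with a pressure estimate — testing the momentum equation against $\mathcal{B}(\n_\ep^{\a}-\overline{\n_\ep^{\a}})$ for a suitable small $\a>0$ — to improve integrability of the artificial pressure $\de\n_\ep^4$, and hence deduce $\|\n_\ep\|_{L^{5}}\le C$ uniformly (the exponent $5$ being dictated by $\de\n^4$ times a first-order gain). The Neumann condition $\p\n/\p n=0$ makes the computation slightly different from the Dirichlet case but the structure of \cite{novo} carries over.

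With uniform bounds in hand, I would pass to weak limits: $\n_\ep\rightharpoonup\n$ in $L^5$, $u_\ep\rightharpoonup u$ in $H_0^1$, $\mu_\ep\rightharpoonup\mu$ and $c_\ep\rightharpoonup c$ in $H^1$, and note that all the $\ep$-terms ($\ep^2\n u$, $\ep^4\lap\n$, $\ep^4\na\n\cdot\na u$, $\ep\n c$, $\ep\n_0c_0$, etc.) vanish in $\mathcal{D}'$ because they carry a positive power of $\ep$ against quantities that are at worst $O(\ep^{-2})$ in the relevant norm, by the weighted estimates above. The constraints \eqref{e7b} are inherited from \eqref{a19} and $\eqref{n3}_5$ in the limit. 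The linear terms in \eqref{n6}$_3$, \eqref{n6}$_4$ pass directly; the only terms requiring strong convergence are $\n_\ep u_\ep\otimes u_\ep$, the pressure $\de\n_\ep^4+\n_\ep^2\p_\n f$, the transport term $\n_\ep u_\ep\cdot\na c_\ep$, and the capillary-type term $(\n_\ep\mu_\ep-\n_\ep\p_c f)\na c_\ep=-\lap c_\ep\,\na c_\ep$.

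The main obstacle is the strong convergence of $\na c_\ep$, needed for the capillary term and the convective term in the Cahn–Hilliard equation. For this I would use the fourth equation $\lap c_\ep=\n_\ep\p_c f(\n_\ep,c_\ep)-\n_\ep\mu_\ep$: the right-hand side is bounded in $L^{p}$ for some $p>1$ using \eqref{b2}, $\n_\ep\in L^5$, and $\mu_\ep\in L^6$ (via Poincaré from $\na\mu_\ep\in L^2$ and the integral normalization \eqref{bb12}), so elliptic regularity with the Neumann condition gives $c_\ep$ bounded in $W^{2,p}_n$, hence (by compact embedding) $\na c_\ep\to\na c$ strongly in $L^q$ for some $q>2$, after first upgrading the integrability of the right side by a bootstrap; this is essentially the argument indicated in the introduction around "the proof of \eqref{b19}". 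Strong convergence of $\n_\ep$ is then obtained by the standard effective-viscous-flux / renormalization machinery of Lions–Feireisl adapted to the stationary setting (using that $H_1(c)\ge1$ by Remark \ref{r2.1}, so the "pressure" $\n^2\p_\n\ti f$ is monotone in $\n$), which also yields the renormalized continuity equation \eqref{e7} with $b(z)=z$ and with truncations. Finally, strong convergence of $u_\ep$ in $L^2$ follows from the $H^1$ bound and Rellich, giving convergence of $\n_\ep u_\ep\otimes u_\ep$; lower semicontinuity of the left-hand side of \eqref{1.8} then yields the energy inequality \eqref{e7a}. Assembling these limits in the weak formulations \eqref{e5}, \eqref{e4}, \eqref{e7} and the class \eqref{e7c} completes the proof.
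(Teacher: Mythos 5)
Your overall plan matches the paper's: uniform bounds via Bogovskii/energy estimates, weak limits, strong convergence of $\na c_\ep$, then an effective-viscous-flux argument for the density. But there are two points worth flagging.

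First, the Bogovskii test function is mis-calibrated. The paper tests $\eqref{n1}_2$ against $\mathcal{B}(\n_\ep-\n_0)$ (see \eqref{b5}), i.e.\ with exponent $\a=1$, and this is what gives the gain of one full order from $\de\n_\ep^4$ to $\de\n_\ep^5$ and hence the uniform $L^5$ bound. Your proposal to use $\mathcal{B}(\n_\ep^{\a}-\overline{\n_\ep^{\a}})$ for a \emph{small} $\a>0$ is internally inconsistent: a small $\a$ yields an $\a$-order gain, producing only $\n_\ep\in L^{4+\a}$, not $L^5$. The small-$\a$ trick does appear in this paper, but in the $\de$-limit (Lemma~\ref{lem5.2}), where it is forced because the artificial pressure is being removed; at the $\ep$-level the exponent is exactly $1$. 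Also, the Neumann condition on $\n$ is irrelevant to the Bogovskii step — Bogovskii's output vanishes on $\p\om$, matching $u|_{\p\om}=0$, so the computation is the same as in the Dirichlet setting; the Neumann condition enters only through the $\ep^4\lap\n$ term in $\eqref{n1}_1$.

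Second, for the strong convergence of $\na c_\ep$ your route via $L^p$ elliptic regularity for $\lap c_\ep = \n_\ep\p_c f - \n_\ep\mu_\ep$ (giving $c_\ep$ bounded in $W^{2,30/11}$, hence $\na c_\ep\to\na c$ strongly by Rellich) is valid but is \emph{not} what the paper does. The paper proves \eqref{b19} by a more elementary Minty-type identity: test $\eqref{n1}_4$ against $c_\ep$ and against $c$, pass to the limit using only the weak convergences and $\|\n_\ep\|_{L^5}\le C$, and conclude $\lim\int|\na c_\ep|^2=\int|\na c|^2$, which together with weak $L^2$ convergence gives strong $L^2$ convergence. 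Your argument gives a stronger conclusion ($L^q$ convergence for $q<30$) at the cost of invoking Neumann elliptic regularity; the paper's gives just $L^2$, which is all that is needed for the capillary term. Either works. Finally, when you invoke monotonicity of $\n^2\p_\n f=(\g-1)\n^\g+H_1(c)\n$, keep in mind that the second term is not monotone in $\n$ alone — it has a $c$-dependent coefficient — and the paper disposes of the cross term $\int(\overline{\n^2}-\n^2)H_1(c)$ in \eqref{4.12a} using convexity and $H_1\ge 0$ together with the already-established strong convergence of $c_\ep$. Also note that truncations $T_k$ and the renormalized continuity equation are used in the $\de$-limit; the $\ep$-limit argument for $\overline{\n^4}=\n^4$ in the paper is a more direct monotone-operator computation based on the effective viscous flux Lemma~\ref{lem4.3} and the sign $\int\n\div u=0$ from renormalization.
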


   Theorem \ref{t4.1}  is indeed a result of   $\ep$-limit of the solutions  $(\n_{\ep},u_{\ep},\mu_{\ep},c_{\ep})$  obtained in  Theorem \ref{t3.1}, as shown below.  First the following lemma   derives     some  uniform  in $\ep$  estimates  on $(\n_{\ep},u_{\ep},\mu_{\ep},c_{\ep})$.
\begin{lemma}\la{lem4.1} Let    $ (\n_{\ep},u_{\ep},\mu_{\ep},c_{\ep})$ be  a solution    in Theorem \ref{t3.1}.  Then there exists a constant  $C$ which  is independent  of   $\ep,$ such that
 \be\la{b6}\ba &\|\n_{\ep}^{5}\|_{L^{1}}+\|\n_{\ep}^{3}\frac{\p f}{\p \n_{\ep}} \|_{L^{1}}+\|c_{\ep}\|_{H^{1}}\le C.\ea\ee
\end{lemma}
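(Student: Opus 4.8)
The starting point is the energy identity obtained in Section 2 for the approximate solutions. Repeating the computation that produced \eqref{1.41}, but now at $\si=1$ and using that $(\n_\ep,u_\ep,\mu_\ep,c_\ep)$ solves \eqref{n1}--\eqref{n1b} exactly, one arrives at an identity of the form
\be\la{p1}\ba
&\ep^{2}\int\frac{\de}{3}\n_\ep^{4}+\ep^{2}\int \n_\ep f(\n_\ep,c_\ep)+\ep\int|\na c_\ep|^{2}+\int|\na\mu_\ep|^{2}+\int\mathbb{S}_{ns}:\na u_\ep\\
&\quad+\ep^{4}\int\left(\de|\na\n_\ep^{2}|^{2}+4(\g-1)\g^{-1}|\na\n_\ep^{\g/2}|^{2}+4|\na\sqrt{\n_\ep}|^{2}\right)\\
&\le \int(\n_\ep g_{1}+g_{2})\cdot u_\ep+C(\de)+\text{(lower order terms in }\ep).
\ea\ee
Dropping the non-negative $\ep$-weighted terms and using $\int\mathbb{S}_{ns}:\na u_\ep=\lambda_1\|\na u_\ep\|_{L^2}^2+(\lambda_1+\lambda_2)\|\div u_\ep\|_{L^2}^2\ge c_0\|\na u_\ep\|_{L^2}^2$ (here I use $2\lambda_1+3\lambda_2\ge0$, which after a short algebraic rearrangement gives $\lambda_1+\lambda_2\ge-\lambda_1/3$ and hence coercivity), the right-hand side is controlled by $C\|u_\ep\|_{L^6}(1+\|\n_\ep\|_{L^{6/5}})+C(\de)$, which by Sobolev, Young and $\|\n_\ep\|_{L^1}=m_1$ is absorbed except for a term $C(1+\|\n_\ep\|_{L^{6/5}}^2)$. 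This already yields $\|\na u_\ep\|_{L^2}+\|\na\mu_\ep\|_{L^2}\le C(1+\|\n_\ep\|_{L^{6/5}})$, but it does \emph{not} close because the right side still contains an unknown power of $\n_\ep$; moreover we need the improved bound $\n_\ep\in L^5$, not just $L^{6/5}$.

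The mechanism to upgrade the density integrability is the Bogovskii test function, exactly as in the stationary single-fluid theory of \cite{lion,novo,novo1}. Apply $\div\Phi=\n_\ep^{\,b}-\frac{1}{|\om|}\int\n_\ep^{\,b}$ in the momentum equation \eqref{n1}$_2$ with a suitable small exponent $b>0$ (one expects $b$ determined so that $b+\g=5$, i.e.\ $b=5-\g$ when $\g<5$, giving the $L^5$-bound in \eqref{b6}; for $\g\ge5$ the term $\de\n_\ep^4$ already gives more than enough and one takes $b=\g-1$ to read off $\|\n_\ep^3\p f/\p\n_\ep\|_{L^1}$). Testing produces, on the left, the good term $\int\de\n_\ep^{4+b}+\int\n_\ep^{2+b}\p f/\p\n_\ep=\int\de\n_\ep^{4+b}+\int\n_\ep^{b}(\g-1)\n_\ep^{\g}+\int\n_\ep^{b}\big(H_1(c_\ep)\n_\ep+\text{bdd}\big)$; since $H_1$ is positive and bounded below (this is the whole point of Remark \ref{r2.1}), every density term on the left is non-negative and at least one is $c\,\|\n_\ep^{4+b}\|_{L^1}$ or $c\,\|\n_\ep^{\g+b}\|_{L^1}$. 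On the right one gets the convective term $\int\n_\ep u_\ep\otimes u_\ep:\na\Phi$, the viscous term $\int\mathbb{S}_{ns}:\na\Phi$, the capillary term $\int(\n_\ep\p f/\p c-\n_\ep\mu)\na c\cdot\Phi$, and the force term. Using $\|\na\Phi\|_{L^q}\le C\|\n_\ep^{\,b}\|_{L^q}$ and $\|\Phi\|_{L^\infty}$ (Bogovskii plus Sobolev with $bq$ slightly subcritical), together with the $H^1$ bounds on $u_\ep,\mu_\ep,c_\ep$ already in hand (in terms of $\|\n_\ep\|_{L^{6/5}}$), each term is estimated by a sub-linear power of $\|\n_\ep^{\g+b}\|_{L^1}$ plus constants, so Young's inequality absorbs it and closes the estimate. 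This is the step I expect to be the main obstacle: the capillary term $\int(\n_\ep\p f/\p c-\n_\ep\mu)\,\na c_\ep\cdot\Phi$ is genuinely new relative to the single-fluid case and one must bound $\n_\ep\mu_\ep$ and $\n_\ep\ln\n_\ep$ in $L^{6/5}$ by interpolation between $L^1$ and the density norm being estimated, then pair with $\|\na c_\ep\|_{L^2}$ and $\|\Phi\|_{L^\infty}$; the bookkeeping of exponents (which $b$, which $q$) has to be done so that all the resulting density powers are strictly below $\g+b$.

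Once $\|\n_\ep\|_{L^5}$ (equivalently $\|\n_\ep^5\|_{L^1}$ and $\|\n_\ep^3\p f/\p\n_\ep\|_{L^1}$, using \eqref{b1}--\eqref{b2} to see $\n_\ep^3\p f/\p\n_\ep\le C(\n_\ep^{\g+2}+\n_\ep^2\ln\n_\ep+\n_\ep^2)\le C(1+\n_\ep^5)$ for $\g\le3$, and handled directly by the $\de\n^4$ term for larger $\g$) is bounded independently of $\ep$, feed this back into the energy estimate \eqref{p1}: now the right-hand side $C(1+\|\n_\ep\|_{L^{6/5}}^2)\le C$, so $\|\na u_\ep\|_{L^2}$ and $\|\na\mu_\ep\|_{L^2}$ are uniformly bounded, and then $\|c_\ep\|_{H^1}\le C$ follows by testing $\eqref{n1}_4$ with $c_\ep$: $\int|\na c_\ep|^2=\int\n_\ep\mu_\ep c_\ep-\int\n_\ep\frac{\p f}{\p c}c_\ep$, and both right-hand terms are controlled by $\|\n_\ep\|_{L^5}\|\mu_\ep\|_{L^6}\|c_\ep\|_{\text{(low)}}+\|\n_\ep\ln\n_\ep\|_{L^{6/5}}\|c_\ep\|_{L^6}$ with the $L^6$ norm of $c_\ep$ absorbed by Poincaré (the mean of $c_\ep$ being controlled via $\int\n_\ep c_\ep=m_2$ exactly as in \eqref{bb13}). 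This gives all three bounds in \eqref{b6} uniformly in $\ep$, completing the proof.
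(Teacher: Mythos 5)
Your overall strategy --- energy identity plus a Bogovskii test function to upgrade density integrability --- matches the paper's structure, but your exponent choice is over-engineered. The paper tests the momentum equation simply by $\mathcal{B}(\n_\ep-\n_0)$, i.e.\ $b=1$. In this section $\de$ is \emph{fixed} (only $\ep$-independence is claimed in \eqref{b6}), so the artificial pressure contribution $\int\de\n_\ep^4\cdot\n_\ep=\de\int\n_\ep^5$ already yields the $L^5$ bound regardless of $\g$, and the real pressure supplies $\int\n_\ep^3\frac{\p f}{\p\n_\ep}$ directly --- exactly the two quantities in \eqref{b6}. Taking $b>1$ is both unnecessary and harder to control, since the Bogovskii bound $\|\na\Phi\|_{L^q}\lesssim\|\n_\ep^b\|_{L^q}$ then demands integrability of $\n_\ep$ that you do not yet possess.

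The genuine gap is the sequencing of the $\na c_\ep$ estimate. You correctly flag the capillary term $I_7=\int\bigl(\n_\ep\frac{\p f}{\p c_\ep}-\n_\ep\mu_\ep\bigr)\mathcal{B}(\n_\ep-\n_0)\cdot\na c_\ep$ as the main obstacle and propose to pair it with $\|\na c_\ep\|_{L^2}$, yet you postpone $\|c_\ep\|_{H^1}\le C$ until \emph{after} the density bound. That leaves $\|\na c_\ep\|_{L^2}$ unknown inside the Bogovskii estimation, and since it in turn involves density norms, the two unknowns chase each other circularly. The paper breaks this circle in \eqref{78}: testing $\eqref{n1}_4$ by $c_\ep$ and combining with \eqref{3.30c}, \eqref{a26} gives the \emph{coupled quantitative bound}
\bnn \|\na c_\ep\|_{L^2}^{2}\le C\bigl(\|\n_\ep\|_{L^{5}}^{5/2}+1\bigr) \enn
\emph{before} $I_7$ is estimated; substituting this into $I_7$ yields $I_7\le C\|\n_\ep\|_{L^5}^{4}+C$, and the sub-critical exponent $4<5$ lets Young's inequality absorb everything into the left-hand term $\de\int\n_\ep^{5}$. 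That simultaneous closure --- density norm, gradient of $c_\ep$, and via \eqref{3.30c} the full $\|c_\ep\|_{H^1}$ --- is the missing idea; without it the capillary term cannot be closed.
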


\begin{proof}   Let $\mathcal{B}$  be the Bogovskii operator  as defined in  \eqref{05}.  If we test    $\eqref{n1}_{2}$   by  $\mathcal{B}(\n_{\ep}-\n_{0})$, we infer
 \be\la{b5}\ba&\int\left(\de\n_{\ep}^{4}+\n_{\ep}^{2}\frac{\p f}{\p \n_{\ep}}\right)\n_{\ep} \\
&= \int \left(\de\n_{\ep}^{4}+\n_{\ep}^{2}\frac{\p f}{\p \n_{\ep}} \right)\n_{0}-\int  (\n_{\ep} g_{1}+g_{2})\cdot \mathcal{B}(\n_{\ep}-\n_{0})\\
&\quad + \ep^{2}\int\n_{\ep} u_{\ep} \cdot \mathcal{B}(\n_{\ep}-\n_{0})+\ep^{4}\int  \na \n_{\ep} \cdot \na u_{\ep}\mathcal{B}(\n_{\ep}-\n_{0})\\&\quad-\int \n_{\ep} u_{\ep}\otimes u_{\ep}:\na\mathcal{B}(\n_{\ep}-\n_{0})\\
&\quad+ \int \lambda_{1}(\na u_{\ep}+(\na u_{\ep})^{\top}):\na \mathcal{B}(\n_{\ep}-\n_{0})+  \lambda_{2}\div u_{\ep}\div \mathcal{B}(\n_{\ep}-\n_{0})\\
&\quad+ \int \left(\n_{\ep}\frac{\p f}{\p c_{\ep}}-\n_{\ep}\mu_{\ep} \right)\mathcal{B}(\n_{\ep}-\n_{0})\cdot \na c_{\ep} \\
&= \sum_{i=1}^{7}I_{i}.\ea\ee
Owing to  \eqref{b1}, \eqref{b2}, \eqref{a19}, \eqref{04},  and the simple fact $\n^{2}\frac{\p f}{\p \n}=  (\g-1)\n^{\g}+H_{1}\n,$  we get
\bnn\ba I_{1}+I_{2} &\le \left|\int \left(\de\n_{\ep}^{4}+\n_{\ep}^{2}\frac{\p f}{\p \n_{\ep}}\right)\n_{0}\right|+\left|\int  (\n_{\ep} g_{1}+g_{2})\mathcal{B}(\n_{\ep}-\n_{0})\right|\\
&\le C\int \left(\de \n_{\ep}^{4} +\n_{\ep}^{2}\frac{\p f}{\p \n_{\ep}}\right)+C(1+\|\n_{\ep}\|_{L^{\frac{6}{5}}})\|\na \mathcal{B}(\n_{\ep}-\n_{0})\|_{L^{2}} \\
&\le   \frac{1}{8}\int\left(\de\n_{\ep}^{5}+\n_{\ep}^{3}\frac{\p f}{\p \n_{\ep}}\right) +C.\ea\enn
Next, by \eqref{a26}  we have
 \bnn\ba &I_{3}+I_{4}+I_{5}\\
 &\le  \left(\ep^{2}\|\n_{\ep}\|_{L^{2}}+\ep^{4}\|\na \n_{\ep}\|_{L^{2}}\right)\|  u_{\ep}\|_{H^{1}}\|\mathcal{B}(\n_{\ep}-\n_{0})\|_{L^{\infty}}\\&\quad+\|\n_{\ep}\|_{L^{\frac{12}{5}}}\|u_{\ep}\|_{L^{6}}^{2}\|\na\mathcal{B}(\n_{\ep}-\n_{0})\|_{L^{4}} \\
&\le C \left(1+\|\n_{\ep}\|_{L^{\frac{6}{5}}}^{2}\right)\|\n_{\ep}\|_{L^{\frac{12}{5}}}\|\mathcal{B}(\n_{\ep}-\n_{0})\|_{W^{1,4}} \\
&\le \frac{\de}{8}\|\n_{\ep}\|_{L^{5}}^{5}+C.\ea\enn
Similarly,   \bnn\ba  I_{6}&\le \left| \int \lambda_{1}(\na u_{\ep}+(\na u_{\ep})^{\top}):\na \mathcal{B}(\n_{\ep}-\n_{0})+  \lambda_{2}\div u_{\ep}\div\mathcal{B}(\n_{\ep}-\n_{0}) \right|\\
&\le  C \|\na u_{\ep}\|_{L^{2}}\|\n_{\ep}-\n_{0}\|_{L^{2}} \le \frac{\de}{8}\|\n_{\ep}\|_{L^{5}}^{5}+C.\ea\enn
To deal with the last  term,  we  multiply  $\eqref{n1}_{4}$ by $c_{\ep}$, then use   \eqref{b1}, \eqref{b2},  \eqref{3.30c},  \eqref{a26} and the interpolation inequality to deduce
\be\la{78}\ba\|\na c_{\ep}\|_{L^{2}}^{2}&=\int \left(\n_{\ep}\mu_{\ep}- \n_{\ep} \frac{\p f}{\p c_{\ep}}\right) c_{\ep}\\
&\le  \| c_{\ep}\|_{L^{2}} \left(\int|\n_{\ep}\mu_{\ep}- \n_{\ep} \frac{\p f}{\p c_{\ep}}|^{2}\right)^{\frac{1}{2}}\\
&\le C\| c_{\ep}\|_{L^{2}} \left(\|\n_{\ep} \|_{L^{3}}\|\mu_{\ep}\|_{L^{6}}+\|\n_{\ep}\ln\n_{\ep} \|_{L^{2}}+1\right)\\
&\le \frac{1}{2}\|\na c_{\ep}\|_{L^{2}}^{2}+C\left( \|\n_{\ep}  \|_{L^{5}}^{\frac{5}{2}}+1\right),\ea\ee
 whence,
\bnn\ba I_{7}&\le \left|\int \left(\n_{\ep} \frac{\p f}{\p c_{\ep}}-\n_{\ep}\mu_{\ep} \right)\mathcal{B}(\n_{\ep}-\n_{0})\cdot\na c_{\ep}\right|\\
&\le  C \|\mathcal{B}(\n_{\ep}-\n_{0})\|_{L^{\infty}}\|\na c_{\ep}\|_{L^{2}} \left(\int|\n_{\ep}\mu_{\ep}- \n_{\ep} \frac{\p f}{\p c_{\ep}}|^{2}\right)^{\frac{1}{2}}\\
&\le   C\|\n_{\ep}\|_{L^{5}}^{4}+C.\ea\enn
In summary,   substituting   the estimates above back  into \eqref{b5},  using  \eqref{78},  we  conclude
 \bnn\|\na c_{\ep}\|_{L^{2}}+\int\left(\de\n_{\ep}^{5}+\n_{\ep}^{3}\frac{\p f}{\p \n_{\ep}}\right)\le C.\enn
This, along  with   \eqref{a26} and  \eqref{3.30c}, gives rise to \eqref{b6}.    The proof of Lemma \ref{lem4.1} is completed.

Having   \eqref{b1},  \eqref{b2},  \eqref{a26} and  \eqref{b6} in hand,   we  can take the limit as $\ep\to 0$  of  $(\n_{\ep}, u_{\ep},\mu_{\ep},c_{\ep})$, subject to  some subsequence, so that,
\be\la{b12}  \n_{\ep} \rightharpoonup \n \,\,{\rm in}\,\, L^{5}\cap L^{\g+1},  \quad \n_{\ep}^{4} \rightharpoonup \overline{\n^{4}}\quad {\rm in}\quad L^{\frac{5}{4}},\ee
\be\la{b10} (\na u_{\ep},\,\na \mu_{\ep},\na c_{\ep})\rightharpoonup  (\na u,\,\na \mu,\,\na c_{\ep})\,\,{\rm in}\,\,\,\, L^{2},\ee
\be\la{b11} (u_{\ep},\, \mu_{\ep},\,c_{\ep}) \rightarrow    (u,\,\mu,\,c)\,\,\,\,{\rm in}\quad L^{p_{1}}\,\,\,(1\le p_{1}<6), \ee
\be\la{b15}   \ep^{4} \na \n_{\ep} \rightarrow 0\,\,\,\, {\rm in}\,\,\,\, L^{2},\ee
\be\la{b15}   \ep^{2} \n_{\ep} \rightarrow 0,\,\,\,\,\,   \ep^{2} \n_{\ep} u_{\ep}\rightarrow 0,\,\,\,\,\, \ep \n_{\ep}c_{\ep}  \rightarrow 0,\,\,\, \ep^{4} \na \n_{\ep}\na u_{\ep}\rightarrow 0\,\,\,\, {\rm in}\quad  L^{1}.\ee
Moreover,  it follows from  \eqref{b12} and  \eqref{b11}   that
\be\la{b14}  (\n_{\ep}  u_{\ep},\,\n_{\ep}  \mu_{\ep})\rightharpoonup (\n u,\,\n\mu)\,\,\,  {\rm in}\,\,\,  L^{2}, \quad \quad  \n_{\ep}  u_{\ep} \otimes u_{\ep} \rightharpoonup \n u \otimes u \,\,\, {\rm in}\quad  L^{p}\,\,({\rm for\,\,some}\,\,p>1),\ee
and
\be\la{b18} \ba  \n_{\ep} \frac{\p f}{\p c_{\ep}}&=\n_{\ep}\ln\n_{\ep}H_{1}'(c_{\ep})+\n_{\ep}H_{2}'(c_{\ep}) \rightharpoonup \overline{\n\ln \n}H_{1}'(c)+\n H_{2}'(c)=\overline{ \n \frac{\p f}{\p c}}\,\, \,\,{\rm in}\,\,\,\, L^{2}, \ea\ee
\be\la{b18a}\ba  \n_{\ep}^{2} \frac{\p f}{\p \n_{\ep}}=(\g-1)\n_{\ep}^{\g}+\n_{\ep}H_{1}(c_{\ep}) \rightharpoonup (\g-1)\overline{\n^{\g}}+\n H_{1}(c)=\overline{ \n^{2} \frac{\p f}{\p \n}}\,\,\,\, {\rm in}\,\,\,\, L^{\frac{\g+1}{\g}}.\ea \ee

It remains   to verify     the strong convergence of $\na c_{\ep}$, i.e.,\be\la{b19} \na   c_{\ep} \rightarrow \na c\quad {\rm in}\quad  L^{2}.\ee
 {In fact, as in  \cite{Fei},  we use   equality $\eqref{n1}_{4}$  to obtain} 
\be\la{b16} \int  \na c_{\ep}\na \phi=\int \n_{\ep}\mu_{\ep}\phi-\int  \n_{\ep}\frac{\p f}{\p c_{\ep}} \phi,\ee
which, together with  \eqref{b11}, \eqref{b14}-\eqref{b18},   provides us
\bnn\ba\lim_{\ep \rightarrow 0}\int |\na c_{\ep}|^{2}
&=\lim_{\ep \rightarrow 0}\int \n_{\ep}\mu_{\ep}c_{\ep}-\lim_{\ep \rightarrow 0}\int \n_{\ep}\frac{\p f}{\p c_{\ep}} c_{\ep} =\int \n \mu c -\int \overline{\n \frac{\p f}{\p c}}c.\ea\enn
On the other hand,    if we    select  $\phi=c$ in \eqref{b16},  we obtain
\bnn\ba\int  |\na c|^{2}
=\lim_{\ep \rightarrow 0}\int \na c_{\ep}\cdot \na c
&=\lim_{\ep \rightarrow 0}\int \n_{\ep}\mu_{\ep}c-\lim_{\ep \rightarrow 0}\int  \n_{\ep}\frac{\p f}{\p c_{\ep}}  c =\int \n \mu c -\int \overline{\n \frac{\p f}{\p c}}c.\ea\enn
Thus  \bnn \lim_{\ep\rightarrow0}\int |\na c_{\ep}|^{2}=\int |\na c |^{2}, \enn
which, together with \eqref{b10},  guarantees  \eqref{b19}.

From \eqref{b12}-\eqref{b19},    we are able  to   pass limit  and get  the  integral  equalities \eqref{e5}-\eqref{e4}  with     $ \n^{4},\,\n \frac{\p f}{\p c},\,\n^{2}\frac{\p f}{\p\n}$ replaced by    $\overline{\n^{4}},\,\overline{\n \frac{\p f}{\p c}},\,\overline{\n^{2}\frac{\p f}{\p \n}}$, respectively.   In addition,   we obtain   \eqref{e7b} and      \eqref{e7a} from     \eqref{w3},  $\eqref{n1}_{1}$,   \eqref{w7},    and \eqref{b6}.

 Finally,    \eqref{e7}  is guaranteed by  the following    lemma, whose proof is available in \cite[Lemma 2.1]{novo1} and \cite[Lemma 3.3]{novo}.
 \begin{lemma}\la{lem4.2} Let  $(\n,u)$  be a solution to $\eqref{n6}_{1}.$
Assume that  $\n\in L^{2}(\om) $ and $ u\in  H^{1}_{0}(\om,\r)$.  If  we extend   $(\n,u)$   by zero outside $\om,$  we have     \be\la{wq} {\rm div}(b(\n) u) +(b'(\n)\n-b(\n))\div u=0\quad {\rm in}\quad \mathcal{D}'(\mathbb{R}^{3}),\ee
where $b(z)=z$, or  $b\in C^{1}([0,\infty))$ with $b'(z)=0$ for large $z$.
  \end{lemma}
%\begin{proof}
%  In case of    $b(\n)=\n$,  the proof is the same as   in \cite[Lemma 2.1]{novo1}.  Next,      mollifying  equality  $\eqref{n6}_{1}$  yields
%\be\la{sse}  0=\div(\n^{\eps} u)+\underbrace{(\div (\n u))^{\eps}-\div(\n^{\eps} u)}_{S_{1}^{\eps}},\quad a.e.\,\,{\rm in}\,\,\r.\ee
%By  the commutator estimate (see \cite[lemma 2.3]{lion}),  one has    $S_{1}^{\eps}\rightarrow0$ in $L^{1}.$    Therefore,
 % multiplying   \eqref{sse}  by $b'(\n^{\eps})$,  and using integration by parts,   we get  the desired \eqref{wq} by sending   $\eps\rightarrow 0$.
%\end{proof}

In order to   complete  the proof of  Theorem \ref{t4.1},   we need to verify  \be\la{b21} \overline{\n^{4}}=\n^{4},\quad \overline{\n \frac{\p f}{\p c} }=\n  \frac{\p f}{\p c},\quad \overline{\n^{2}  \frac{\p f}{\p \n}}=\n^{2}  \frac{\p f}{\p \n}.\ee
For that purpose,  let us define   \bnn\ba C^{2}([0,\infty))\ni b_{n}(\n) =\left\{\ba
&\n\ln (\n+\frac{1}{n}),\quad \quad \n\le n;\\
&(n+1)\ln (n+1+\frac{1}{n}),\,\,\n\ge n+1.\\
\ea \right.
\ea\enn
First we see that   $b_{n}(\n)\rightarrow \n\ln\n$ a.e.    because of the fact: $\n \in L^{1}$.   Select  $b_{n}$ in  \eqref{wq} and  send  $n\rightarrow \infty$   to  obtain
\bnn  {\rm div}(u\n\ln \n)+\n \div u =0\quad {\rm in}\quad \mathcal{D}'(\mathbb{R}^{3}).\enn This implies
\be\la{4.8}  \int  \n \div u=0.
 \ee
On the other hand,   multiplying $\eqref{n1}_{1}$  by  $b_{n}'(\n_{\ep})$ gives
\be\la{4.9}\ba &\int (b_{n}'(\n_{\ep})\n_{\ep}-b_{n}(\n_{\ep})) \div u_{\ep}\\
& =\ep^{2}\int  \n_{0}b_{n}'(\n_{\ep})-\ep^{2}\int  \n_{\ep} b_{n}'(\n_{\ep}) -\ep^{4}\int b_{n}''(\n_{\ep}) |\na  \n_{\ep}|^{2}\\
& \le\ep^{2}\int  \n_{0}b_{n}'(\n_{\ep})-\ep^{2}\int  \n_{\ep} b_{n}'(\n_{\ep}) {-\ep^{4}\int_{\{x:\,\,b_{n}''(\n_{\ep})\le  0\}} b_{n}''(\n_{\ep}) |\na  \n_{\ep}|^{2}.} \ea \ee
{It follows from  \eqref{a26av} that $\|\na \n_{\ep}\|_{L^{2}}\le C\left(\|\na \n_{\ep}^{2}\|_{L^{2}}+\|\na \sqrt{\n_{\ep}}\|_{L^{2}}\right)\le C(\ep)$. Then,  for fixed $\ep>0,$}
{\bnn\ba \left|-\ep^{4}\int_{\{x:\,b_{n}''(\n_{\ep})\le 0\}} b_{n}''(\n_{\ep}) |\na  \n_{\ep}|^{2}\right| &\le  C(\ep) \int_{\{x:\,b_{n}''(\n_{\ep})\le 0\}}  |\na  \n_{\ep}|^{2}\\
& \le C(\ep)  \int_{\{x:\,n\le \n_{\ep}\le n+1\}}  |\na  \n_{\ep}|^{2}\rightarrow0 \quad (n\rightarrow\infty),\ea\enn}
 {where  in  the second inequality   we have used  the fact  $b_{n}''(\n_{\ep})\ge 0 $ if $\n_{\ep}\le n$ or $ \n_{\ep}\ge n+1$.}

Recalling    \eqref{b6} and the definition of $b_{n}$,   one deduces
\bnn\ba &\lim_{n\rightarrow\infty} \int  \n_{0}b_{n}'(\n_{\ep})\\
&=\lim_{n\rightarrow\infty}\left( \int_{\{\n_{\ep}\le n\}}\n_{0}b_{n}'(\n_{\ep})+\int_{\{\n_{\ep}>n\}}\n_{0}b_{n}'(\n_{\ep}) \right)\\
&\le \lim_{n\rightarrow\infty} \int_{\{\n_{\ep}\le n\}} \n_{0}\left(\ln (\n_{\ep}+\frac{1}{n}) +\frac{\n_{\ep}}{\n_{\ep}+\frac{1}{n}}\right)+C\lim_{n\rightarrow\infty}{\rm meas}\,|\{x;\,\n_{\ep}\ge n\}|\\
&\le  \lim_{n\rightarrow\infty}\int_{\{1/2\le \n_{\ep}\le n\}} \n_{0} \ln (\n_{\ep}+\frac{1}{n})+\lim_{n\rightarrow\infty}\int \frac{\n_{0} \n_{\ep}}{\n_{\ep}+\frac{1}{n}} \\
&\le C.\ea\enn
 Similarly,  \bnn \lim_{n\rightarrow\infty} \int  \n_{\ep}b_{n}'(\n_{\ep})\le C.\enn   Therefore,  taking  sequentially $n\rightarrow\infty$ and   $\ep\rightarrow0$   in \eqref{4.9}, using   \eqref{4.8}, one has
  \be\la{4.10}\int \overline{\n \div u} = \lim_{\ep\rightarrow0} \int  \n_{\ep} \div u_{\ep}\le 0=\int  \n \div u.\ee

To  proceed,  define   the following effective viscous flux:
\bnn \mathbb{F}_{\ep} =\de \n^{4}_{\ep}+\n_{\ep}^{2} \frac{\p f}{\p  \n_{\ep}}-(2\lambda_{1}+\lambda_{2})\div u_{\ep}\quad {\rm and}\quad   \overline{\mathbb{F}}=\de\overline{\n^{4}}+\overline{\n^{2} \frac{\p f}{\p \n}}-(2\lambda_{1}+\lambda_{2})\div u.\enn
We have the following lemma.

\begin{lemma}\la{lem4.3} Under the assumptions    in  Theorem \ref{t4.1}, the following property holds:
\be\la{4.6}\ba  \lim_{\ep\rightarrow0}\int \phi   \n_{\ep} \mathbb{F}_{\ep}=\int \phi \n \overline{\mathbb{F}},\quad \forall\,\,\,\phi\in C_{0}^{\infty}(\om).\ea \ee
\end{lemma}

Let us   continue to prove \eqref{b21}  with the  aid of \eqref{4.6}.  The proof  of Lemma \ref{lem4.3} will be postponed to the end of this section.

In view of   \eqref{4.10},   $\mathbb{F}_{\ep}$  and $\overline{\mathbb{F}},$   we take  $ \phi \rightarrow 1$ in \eqref{4.6} and deduce  \be\la{4.11}\ba& \lim_{\ep\rightarrow0}\int \n_{\ep}\left(\de \n^{4}_{\ep}+
 \n_{\ep}^{2} \frac{\p f}{\p  \n_{\ep}} \right)\le \int \n \left(\de\overline{\n^{4}}+\overline{\n^{2} \frac{\p f}{\p\n}}\right).\ea \ee
According to  \eqref{b18a} and \eqref{4.11},    we have
 \bnn\ba & \int \left(\de\overline{\n^{5}}+(\g-1)\overline{\n^{\g+1}}+
\overline{\n^{2}}H_{1}(c) \right)\\
&=\lim_{\ep\rightarrow0}\int \n_{\ep} \left(\de\n_{\ep}^{4}+(\g-1)\n_{\ep}^{\g}+\n_{\ep} H_{1}(c_{\ep})\right)\\
  &=\lim_{\ep\rightarrow0}\int \n_{\ep}\left(\de \n_{\ep}^{4}+\n_{\ep}^{2}  \frac{\p f}{\p \n_{\ep}}\right)\\
  &\le \int \n \left(\de\overline{\n^{4}}+\overline{\n^{2} \frac{\p f}{\p \n}}\right)=\int \n \left(\de\overline{\n^{4}}+(\g-1)\overline{\n^{\g}} +\n H_{1}(c)\right),\ea \enn
which implies  \be\la{4.12a}\ba &\int  \de\left(\n\overline{\n^{4}}-  \overline{\n^{5}}\right) \ge  (\g-1)\int \left( \overline{\n^{\g+1}}-\n \overline{\n^{\g}}\right)+\int  \left(\overline{\n^{2}}-\n^{2} \right)H_{1}(c)\ge 0,\ea \ee
where the last inequality is  due  to  convexity and   $H_{1}(c)\ge0$.
Next,  for  the given   constant $\beta>0$ and  any $\eta\in C^{\infty}(\om),$
\bnn\ba
  0&\le \int  \left(\n_{\ep}^{4}-(\n+\beta\eta)^{4}\right)(\n_{\ep}-(\n+\beta\eta))\\
 &= \int  \left(\n_{\ep}^{5}-\n_{\ep}^{4}\n -\n_{\ep}^{4}\beta\eta-(\n+\beta\eta)^{4}\n_{\ep}+(\n+\beta\eta)^{5}\right).\ea \enn
By   \eqref{4.12a},  as  $\ep\rightarrow0$,
\bnn\ba 0&\le \int \left(\overline{\n^{5}}-\n \overline{\n^{4}}-\overline{\n^{4}} \beta\eta+(\n+\beta\eta)^{4} \beta\eta\right)\le \int  \left(-\overline{\n^{4}} +(\n+\beta\eta)^{4} \right)\beta\eta.\ea\enn
Replacing  $-\beta$ with $\beta$ in  the  argument above,  and then taking   $\beta\rightarrow0,$  we get
      \bnn\ba   \int \left(\n^{4}  -\overline{\n^{4}}\right) \eta=0.\ea \enn
This  implies  $\overline{\n^{4}} =\n ^{4}$, and thus   $\n_{\ep}\rightarrow \n$  a.e.  in $\om$   since   $\eta$ is arbitrary.
Moreover,  \eqref{b12} implies  that,   for all  $s\in [1,5)$,
\be\la{4.20c}   \n_{\ep}\rightarrow \n\quad  {\rm in}\quad L^{s}.\ee
As a result of     \eqref{4.20c}, \eqref{b12},  \eqref{b18}-\eqref{b18a},  we obtain \eqref{b21}.    The proof of  Theorem \ref{t4.1} is completed. \end{proof}

 {{\bf Proof of Lemma \ref{lem4.3}.}}  {We will   prove Lemma \ref{lem4.3} by the results developed in \cite{lion}.}   Let  $\lap^{-1}(h)=K*h$ be  the convolution of $h$ with the fundamental solution $K$ of the Laplacian in $\r$.  For  $\p_{i}\lap^{-1}\,\,\,(i=1,2,3)$, by  the Mikhlin multiplier theory (cf. \cite{stein}),
\be\la{e0}\left\{\ba & \|\p_{i}\lap^{-1}(h) \|_{W^{1,p}(\om)}\le C(\om,p)\|h\|_{L^{p}(\r)},\quad p\in (1,\infty),\\
& \|\p_{i}\lap^{-1}(h) \|_{L^{p^{*}}(\om)}\le C(\om,p)\|\p_{i}\lap^{-1}(h) \|_{W^{1,p}(\r)},\quad p^{*}=\frac{3p}{3-p},\,\,p<3,\\
& \|\p_{i}\lap^{-1}(h) \|_{L^{\infty}(\om)}\le C(\om,p)\|h\|_{L^{p}(\r)},\quad p>3.\ea\right.\ee
If $h_{n} \rightharpoonup h$ in $L^{p}(\r)$, we have \be\la{e1} \p_{j}\p_{i}\lap^{-1}(h_{n})  \rightharpoonup \p_{j}\p_{i}\lap^{-1}(h)\quad{\rm in }\,\,\,\,L^{p},\ee
and  additionally,  by the Rellich-Kondrachov compactness  theorem, \be\la{e2}  \p_{i}\lap^{-1}(h_{n})  \rightarrow \p_{i}\lap^{-1}(h)\quad {\rm in }\,\, L^{q}, \ee where $q<p^{*}$ if $p<3$ and $q\le \infty$ if $p>3.$

Prolonging  $\n_{\ep}$ to the whole space $\r$ by zero,
multiplying    $\eqref{n1}_{2}^{i}$ by $\phi\p_{i}\lap^{-1}(\n_{\ep})$ with $\phi\in C_{0}^{\infty}(\om)$,   we obtain
 \be\la{4.2}\ba  &\int  \phi \n_{\ep}\mathbb{F}_{\ep}\\
&=-\int \p_{i}\lap^{-1}(\n_{\ep}) \p_{i}\phi \left(\de\n_{\ep}^{4}+\n_{\ep}^{2} \frac{\p f}{\p \n_{\ep}} -(\lambda_{1}+\lambda_{2})\div u_{\ep}\right)\\
&\quad+\lambda_{1}\int \left( \p_{j}u^{i}_{\ep}\p_{i}\lap^{-1}(\n_{\ep}) \p_{j}\phi- u^{i}_{\ep}\p_{j}\p_{i}\lap^{-1}(\n_{\ep})\p_{j}\phi+  \n_{\ep} u_{\ep} \cdot\na \phi\right)\\
&\quad-\int  \left( \n_{\ep}\mu_{\ep} \p_{i} c_{\ep}- \n_{\ep}  \frac{\p f}{\p c_{\ep}}\p_{i} c_{\ep} \right)\phi\p_{i}\lap^{-1}(\n_{\ep})-\int\left(\n_{\ep}g_{1}+g_{2}\right)\phi  \p_{i}\lap^{-1}(\n_{\ep})\\
&\quad-\int \n_{\ep}u_{\ep}^{j}u_{\ep}^{i}\p_{j}\phi \p_{i}\lap^{-1}(\n_{\ep})-\int \n_{\ep}u_{\ep}^{j}u_{\ep}^{i}\phi\p_{j} \p_{i}\lap^{-1}(\n_{\ep})\\
&\quad+\ep^{2}\int \n_{\ep}u_{\ep}^{i} \phi  \p_{i}\lap^{-1}(\n_{\ep}) +\ep^{4}\int  \na \n_{\ep}\cdot \na u_{\ep}^{i}\phi\p_{i}\lap^{-1}(\n_{\ep}),
\ea\ee
where the second line on the right-hand side   comes from
\bnn \ba
&\lambda_{1}\int  \p_{j}u^{i}_{\ep}\left(\p_{i}\lap^{-1}(\n_{\ep})\p_{j}\phi+\p_{j}\p_{i}\lap^{-1}(\n_{\ep})\phi\right)\\
&=\lambda_{1}\int \left( \p_{j}u^{i}_{\ep}\p_{i}\lap^{-1}(\n_{\ep})\p_{j}\phi- u^{i}_{\ep}\p_{j}\p_{i}\lap^{-1}(\n_{\ep})\p_{j}\phi-  u_{\ep}^{i}\p_{i}\n_{\ep} \phi\right)\\
&=\lambda_{1}\int \left( \p_{j}u^{i}_{\ep}\p_{i}\lap^{-1}(\n_{\ep}) \p_{j}\phi- u^{i}_{\ep}\p_{j}\p_{i}\lap^{-1}(\n_{\ep})\p_{j}\phi+  \n_{\ep} u_{\ep} \cdot\na \phi\right)+\lambda_{1}\int \n_{\ep} \div  u_{\ep}\phi.\ea\enn

Next,    since $(\n_{\ep},u_{\ep})\in (H^{1}, H_{0}^{1})$, then $\div (\n_{\ep}u_{\ep})\in L^{\frac{3}{2}}(\r)$  and $\div (\n_{\ep}u_{\ep})=0$ in $\r\backslash\om$.  In addition,  $\n_{\ep}\in H^{2}$ and $ \frac{\p \n_{\ep}}{\p n}|_{\p\om}=0$ imply   \bnn\div (\textbf{1}_{\om}\na \n_{\ep})=\left\{\ba
&\lap \n_{\ep},\quad {\rm in}\,\,\,\om,\\
&0,\quad \r\setminus\om. \ea\right.\enn
Thus,  it makes sense to extend  $\eqref{n1}_{1}$ to the whole space by zero,
\bnn \ep^{2}(\n_{\ep}-\n_{0})+\div (\n_{\ep}u_{\ep})=\ep^{4}\div (\textbf{1}_{\om}\na \n_{\ep})\quad {\rm in}\quad \r,\enn
which yields by straight forward computations,
 \bnn\ba&-\int \n_{\ep}u^{i}_{\ep}\phi\p_{i}\p_{j}\lap^{-1}(\n_{\ep}u_{\ep}^{j} )\\
 &=-\int  \n_{\ep}u^{i}_{\ep}\phi\p_{i} \lap^{-1}(\div(\n_{\ep}u_{\ep}))\\
&=-\ep^{4}\int \n_{\ep}u^{i}_{\ep}\phi \p_{i}\lap^{-1}(\div(\textbf{1}_{\om} \na\n_{\ep}))+\ep^{2}\int \n_{\ep}u^{i}_{\ep}\phi \p_{i}\lap^{-1}(\n_{\ep}-\n_{0}),
\ea \enn and
\be\la{4.2q}\ba
&-\int  \n_{\ep}u_{\ep}^{j}u_{\ep}^{i}\p_{j}\phi \p_{i}\lap^{-1}(\n_{\ep})-\int \n_{\ep}u_{\ep}^{j}u_{\ep}^{i}\phi\p_{j} \p_{i}\lap^{-1}(\n_{\ep})\\
&=-\int \n_{\ep}u_{\ep}^{j}u_{\ep}^{i}\p_{j}\phi \p_{i}\lap^{-1}(\n_{\ep})
 +\int u_{\ep}^{i}\phi \left[\n_{\ep}\p_{i}\p_{j}\lap^{-1}(\n_{\ep}u^{j}_{\ep})-\n_{\ep}u_{\ep}^{j} \p_{j}\p_{i}\lap^{-1}(\n_{\ep}) \right]\\
 &\quad-\int \n_{\ep}u^{i}_{\ep}\phi\p_{i}\p_{j}\lap^{-1}(\n_{\ep}u_{\ep}^{j} )\\
&=-\int  \n_{\ep}u_{\ep}^{j}u_{\ep}^{i}\p_{j}\phi \p_{i}\lap^{-1}(\n_{\ep}) +\int u_{\ep}^{i}\phi \left[\n_{\ep}\p_{i}\p_{j}\lap^{-1}(\n_{\ep}u^{j}_{\ep})-\n_{\ep}u_{\ep}^{j} \p_{j}\p_{i}\lap^{-1}(\n_{\ep}) \right]\\
&\quad-\ep^{4}\int  \n_{\ep}u^{i}_{\ep}\phi \p_{i}\lap^{-1}(\div(\textbf{1}_{\om} \na\n_{\ep}))+\ep^{2}\int \n_{\ep}u^{i}_{\ep}\phi \p_{i}\lap^{-1}(\n_{\ep}-\n_{0}) .\ea\ee
Now,    replace the second line from the bottom in \eqref{4.2} by \eqref{4.2q}   to obtain
\be\la{4.3}\ba  &\int\phi \n_{\ep}\mathbb{F}_{\ep} \\&= -\int \p_{i}\lap^{-1}(\n_{\ep}) \p_{i}\phi \left(\de\n_{\ep}^{4}+\n_{\ep}^{2} \frac{\p f}{\p\n_{\ep}} -(\lambda_{1}+\lambda_{2})\div u_{\ep}\right) \\
&\quad +\lambda_{1}\int\left(  \p_{j}u^{i}_{\ep}\p_{i}\lap^{-1}(\n_{\ep}) \p_{j}\phi-  u^{i}_{\ep}\p_{j}\p_{i}\lap^{-1}(\n_{\ep})\p_{j}\phi+ \n_{\ep} u_{\ep} \cdot\na\phi\right) \\
&\quad -\int \left((\n_{\ep}\mu_{\ep} \p_{i} c_{\ep}+ \n_{\ep}  \frac{\p f}{\p c_{\ep}}\p_{i} c_{\ep} )\phi\p_{i}\lap^{-1}(\n_{\ep}) -  (\n g_{1}+g_{2})\phi  \p_{i}\lap^{-1}(\n_{\ep})\right)\\
&\quad-\int \n_{\ep}u_{\ep}^{j}u_{\ep}^{i}\p_{j}\phi \p_{i}\lap^{-1}(\n_{\ep})+\int u_{\ep}^{i}\phi \left[\n_{\ep}\p_{i}\p_{j}\lap^{-1}(\n_{\ep}u^{j}_{\ep})-\n_{\ep}u_{\ep}^{j} \p_{j}\p_{i}\lap^{-1}(\n_{\ep}) \right]\\
&\quad-\ep^{4}\int   \n_{\ep}u^{i}_{\ep}\phi \p_{i}\lap^{-1}(\div(\textbf{1}_{\om} \na\n_{\ep}))- \na \n_{\ep}\cdot \na u_{\ep}^{i}\phi\p_{i}\lap^{-1}(\n_{\ep})\\
&\quad+\ep^{2}\int  \n_{\ep}u^{i}_{\ep}\phi \p_{i}\lap^{-1}(2\n_{\ep}-\n_{0})\\
&= \sum_{i=1}^{7}J_{i}^{\ep},\ea\ee
where $J_{i}^{\ep}$ denotes the $i^{th}$ integral quantity on the right hand side of \eqref{4.3}.

On the other  hand,  if    we  take  $\ep$-limit in $\eqref{n1}_{2}$  first   and then multiply  the resulting  equation by  $\phi\p_{i}\lap^{-1}(\n)$, we obtain
 \be\la{4.5}\ba & \int \phi \n \overline{\mathbb{F}}\\
&= -\int  \p_{i}\lap^{-1}(\n) \p_{i}\phi \left(\de\overline{\n^{4}}+\overline{\n^{2} \frac{\p f}{\p \n}}-(\lambda_{1}+\lambda_{2})\div u \right) \\
&\quad +\lambda_{1}\int\left( \p_{j}u^{i} \p_{i}\lap^{-1}(\n) \p_{j}\phi-  u^{i} \p_{j}\p_{i}\lap^{-1}(\n)\p_{j}\phi+  \n  u  \cdot\na \phi\right)\\
&\quad -\int\left( (\n \mu  \p_{i} c  + \overline{\n  \frac{\p f}{\p c}}\p_{i} c)\phi \p_{i}\lap^{-1}(\n) -  (\n g_{1}+g_{2})\phi  \p_{i}\lap^{-1}(\n)\right) \\
&\quad  - \int \n u ^{j}u ^{i}\p_{j}\phi \p_{i}\lap^{-1}(\n ) +\int u ^{i}\phi\left[\n \p_{i}\p_{j}\lap^{-1}(\n u^{j})-\n u ^{j} \p_{j}\p_{i}\lap^{-1}(\n )\right]\\
&=\sum_{i=1}^{5}J_{i}. \ea\ee
In terms of \eqref{4.3} and \eqref{4.5},  to  prove \eqref{4.6}  it suffices to check
\bnn\lim_{\ep\rightarrow0} J_{i}^{\ep}=J_{i}\,\,(i=1, 2,  \cdots, 5)\quad {\rm and }\quad\lim_{\ep\rightarrow0} J_{i}^{\ep}=0\,\,(i=6, 7).\enn
In fact,  owing to \eqref{e2}, \eqref{b12}-\eqref{b10}, \eqref{b18a},    we  have   $\lim_{\ep\rightarrow0} J^{\ep}_{1}=J_{1}.$ In a similar  way,   for $i=2,3,4,$   we obtain    $\lim_{\ep\rightarrow0} J^{\ep}_{i}=J_{i}$ from
 \eqref{e1}-\eqref{e2},   \eqref{b12}-\eqref{b11},  \eqref{b14}-\eqref{b18},  and \eqref{b19}.    Next,   by  \eqref{e0}, \eqref{a26},  and \eqref{b6},  we estimate
\bnn\ba &|J_{6}^{\ep}+J_{7}^{\ep}|\\&\le \ep^{4}\|\na \n_{\ep}\|_{L^{2}}\|\n_{\ep}\|_{L^{3}}\|u_{\ep}\|_{L^{6}}+\ep^{2}\|  \n_{\ep}\|_{L^{2}}\|u_{\ep}\|_{L^{2}}\|\p_{i}\lap^{-1}(2\n_{\ep}-\n_{0})\|_{L^{\infty}}\\
&\quad+\ep^{4}\|  \na \n_{\ep}\|_{L^{2}}\|\na u_{\ep}\|_{L^{2}}\|\p_{i}\lap^{-1}(\n_{\ep})\|_{L^{\infty}}\\
&\le \ep^{2}\left(\ep^{2}\|\na \n_{\ep}\|_{L^{2}}\right) \|u_{\ep}\|_{H^{1}_{0}}\left( \|\n_{\ep}\|_{L^{3}}+\|\p_{i}\lap^{-1}(\n_{\ep})\|_{L^{\infty}}\right)\\
&\quad+\ep^{2}\|  \n_{\ep}\|_{L^{2}}\|u_{\ep}\|_{L^{2}}\|\p_{i}\lap^{-1}(2\n_{\ep}-\n_{0})\|_{L^{\infty}}\\
&\le C\ep\rightarrow 0 \quad {\rm as}\quad \ep\rightarrow 0.\ea\enn

Finally,  in  order to check $J_{5},$  we present   the following div-curl Lemma.
\begin{lemma}[\cite{fei5}]\la{lem4.4}  Let $\frac{1}{r_{1}}+\frac{1}{r_{2}}=\frac{1}{r}$ and $1\le r,r_{1},r_{2}<\infty.$
 Suppose that
\bnn v_{\ep}\rightharpoonup v\,\,{\rm in}\,\,L^{r_{1}}\quad {\rm and}\quad w_{\ep}\rightharpoonup w\,\,{\rm in}\,\,L^{r_{2}}.\enn
Then,  for $i,j=1,2,3,$ \bnn v_{\ep}\p_{i}\p_{j}\lap^{-1}(w_{\ep})-w_{\ep}\p_{i}\p_{j}\lap^{-1}(v_{\ep})\rightharpoonup v\p_{i}\p_{j}\lap^{-1}(w)-w\p_{i}\p_{j}\lap^{-1}(v)\quad{\rm in}\,\,L^{r}.\enn
\end{lemma}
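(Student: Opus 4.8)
The plan is to reduce the lemma, via the formal self-adjointness of $R:=\p_i\p_j\lap^{-1}$ on $\r$, to a single commutator term that \emph{gains one derivative}, hence is compact. First note that $\frac1{r_1}+\frac1{r_2}=\frac1r$ with $1\le r$ and $r_1,r_2<\infty$ forces $r_1,r_2\in(1,\infty)$, so by \eqref{e0} the operators $\p_i\lap^{-1}$ and $R$ are bounded on both $L^{r_1}$ and $L^{r_2}$; in particular $G_\ep:=v_\ep R[w_\ep]-w_\ep R[v_\ep]$ is bounded in $L^{r}(\r)$. It therefore suffices to prove $\int\psi\,G_\ep\to\int\psi\,(vR[w]-wR[v])$ for each fixed $\psi\in C_0^\infty(\r)$. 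Since $R$ is a real, even Fourier multiplier it is (formally) self-adjoint; writing $\omega_\ep:=[R,\psi]v_\ep=R[\psi v_\ep]-\psi R[v_\ep]$ and using $R[\psi v_\ep]=\psi R[v_\ep]+\omega_\ep$,
\bnn \int\psi\,v_\ep R[w_\ep]-\int\psi\,w_\ep R[v_\ep]=\int R[\psi v_\ep]\,w_\ep-\int\psi\,w_\ep R[v_\ep]=\int\omega_\ep\,w_\ep ,\enn
so that $\int\psi\,G_\ep=\int\omega_\ep\,w_\ep$.

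The heart of the matter is that $[R,\psi]$ — the commutator of the Calder\'on--Zygmund operator $R$ with multiplication by $\psi\in C_0^\infty$ — is smoothing of order one: $\|[R,\psi]h\|_{W^{1,r_1}(B_\rho)}\le C(\psi,\rho)\|h\|_{L^{r_1}(\r)}$ for every ball $B_\rho$. (This is the harmonic-analysis input beyond \eqref{e0}--\eqref{e2}; it rests on the mean-zero property of the kernel of $\p_i\p_j\lap^{-1}$, cf. \cite{lion,stein,fei5}.) Hence $\{\omega_\ep\}$ is bounded in $W^{1,r_1}_{loc}(\r)$. Moreover, for $x\notin\mathrm{supp}\,\psi$ one has $\omega_\ep(x)=R[\psi v_\ep](x)=\int k(x-y)\psi(y)v_\ep(y)\,dy$ with $k$ homogeneous of degree $-3$, so $|\omega_\ep(x)|\le C\|\psi v_\ep\|_{L^1}|x|^{-3}\le C|x|^{-3}$ for $|x|$ large, uniformly in $\ep$; therefore $\|\omega_\ep\|_{L^{r_2'}(\{|x|>\rho\})}\to0$ as $\rho\to\infty$, uniformly in $\ep$. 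Combining this uniform decay with the Rellich--Kondrachov theorem on balls — and noting $r_2'<r_1^{*}$, since $\frac1{r_1}+\frac1{r_2}=\frac1r\le1<\frac43$ — gives that $\{\omega_\ep\}$ is relatively compact in $L^{r_2'}(\r)$. Since $v_\ep\rightharpoonup v$ and $\psi\in L^\infty$, we have $\psi v_\ep\rightharpoonup\psi v$ in $L^{r_1}$ and hence, by \eqref{e1}, $\omega_\ep\rightharpoonup[R,\psi]v$ in $L^{r_1}$; the full sequence therefore converges, $\omega_\ep\to[R,\psi]v$ \emph{strongly} in $L^{r_2'}(\r)$.

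Finally, $w_\ep\rightharpoonup w$ in $L^{r_2}$ while $\omega_\ep\to[R,\psi]v$ strongly in the conjugate space $L^{r_2'}(\r)$, so $\int\psi\,G_\ep=\int\omega_\ep\,w_\ep\to\int([R,\psi]v)\,w$; running the same self-adjointness identity for the limit pair $(v,w)$ identifies this limit as $\int\psi\,(vR[w]-wR[v])$. Since $\psi\in C_0^\infty(\r)$ is arbitrary and $\{G_\ep\}$ is bounded in $L^{r}(\r)$, this yields $G_\ep\rightharpoonup vR[w]-wR[v]$ in $L^{r}$, which is the claim. The main obstacle — essentially the whole content of the lemma — is the derivative gain in $[R,\psi]$: this is the compensated-compactness mechanism, for neither $R[v_\ep]$ nor $R[w_\ep]$ converges strongly and the product of their weak limits is \emph{not} the weak limit of the product; only the antisymmetric combination passes to the limit, and only because commuting $R$ past the cutoff $\psi$ produces a compact (order $-1$) operator. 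A secondary, purely technical point is the global integrability bookkeeping on $\r$, which is why the pointwise kernel decay of $R$ is used to control the $L^{r_2'}$-tails of $\omega_\ep$ at infinity rather than merely locally.
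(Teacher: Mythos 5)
The paper does not give a proof of this lemma — it is cited from \cite{fei5}, where it is established via the Murat--Tartar div--curl lemma: one applies div--curl to the curl-free field $\na\p_j\lap^{-1}[v_\ep]$ paired with the divergence-free field $w_\ep e_i-\na\p_i\lap^{-1}[w_\ep]$, does the same with the roles of $(v_\ep,i)$ and $(w_\ep,j)$ swapped, and subtracts; the $\sum_k\p_i\p_k\lap^{-1}[v_\ep]\,\p_j\p_k\lap^{-1}[w_\ep]$ cross-terms cancel, leaving exactly the antisymmetric commutator. Your argument is genuinely different and is correct in substance: you localize against $\psi\in C_0^\infty$, use the (formal) self-adjointness of $R=\p_i\p_j\lap^{-1}$ to rewrite $\int\psi G_\ep=\int\big([R,\psi]v_\ep\big)w_\ep$, and invoke the one-derivative smoothing of the Calder\'on--Zygmund commutator $[R,\psi]$ to upgrade the first factor from weak to strong convergence (with the tail estimate $|x|^{-3}$ handling $\r$ instead of a bounded domain). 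Both routes hinge on a nontrivial harmonic-analysis input (div--curl vs.\ commutator smoothing); yours is more self-contained once the commutator estimate is granted, while the Feireisl--Novotn\'y route needs no property of $R$ beyond the algebra that makes the cross-terms cancel. Your exponent bookkeeping ($r_1,r_2\in(1,\infty)$ forced, $r_2'\le r_1$, $r_2'<r_1^*$ from $\tfrac1r<\tfrac43$, and the $L^{r_2'}$-smallness of the tails) all checks out.

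Two remarks worth making explicit. First, the very last step — passing from ``$\{G_\ep\}$ bounded in $L^r$ and $\int\psi G_\ep\to\int\psi G$ for all $\psi\in C_0^\infty$'' to ``$G_\ep\rightharpoonup G$ in $L^r$'' — uses that $C_0^\infty$ is dense in $L^{r'}$, which requires $r>1$. At $r=1$ an $L^1$-bounded sequence converging in $\mathcal{D}'$ need not converge weakly in $L^1$ (mass may concentrate or escape to infinity), so the endpoint $r=1$ of the statement is not reached by your proof without an additional equi-integrability input; Feireisl--Novotn\'y in fact impose $\tfrac1{r_1}+\tfrac1{r_2}<1$, and the present paper only ever uses the lemma with $r>1$, so this is harmless here. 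Second, it would strengthen the write-up to say in one line why $[R,\psi]$ maps $L^{r_1}$ into $W^{1,r_1}_{loc}$: either cite the pseudodifferential calculus ($R$ is of order $0$, so $[R,\psi]$ is of order $-1$), or note that its kernel $k(x-y)\big(\psi(y)-\psi(x)\big)$ has an $x$-gradient that is again a Calder\'on--Zygmund kernel plus a bounded multiplication, so one derivative lands back in $L^{r_1}$.
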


Taking    $v_{\ep}=\n_{\ep}u_{\ep}^{j}$ and $w_{\ep}=\n_{\ep}$ in  Lemma \ref{lem4.4}, and using \eqref{b11}, \eqref{b14},    we get   $\lim_{\ep\rightarrow0} J^{\ep}_{5}=J_{5}.$  This completes the proof of Lemma \ref{lem4.3}.

 \bigskip

  \section{Vanishing artificial pressure}

  In this section, we take the $\delta$-limit in the artificial pressure and prove the main result in Theorem \ref{t}.

By $\eqref{n6}_{1},$   it follows from     $\eqref{e4}_{1}$ that
\be\la{e8}\ba \int \na \mu\cdot \na \phi=-\int  \n  u \cdot \na c  \phi=-\int \n u \cdot \na (c \phi)+\int  \n  c u  \cdot \na \phi=\int  \n   c  u  \cdot \na \phi.\ea\ee
{Next,  from \eqref{b1}, \eqref{3.30}, and \eqref{e7c}, one can easily check that $c\in W^{2,p}(\om)$  for some $p>1$. Let $\phi=\na c \cdot \Phi$ in $\eqref{e4}_{2}$ with $\Phi\in C_{0}^{\infty}(\om;\r)$.}  We have,  by approximation if necessary,  \bnn\ba \int (\n \mu -\n  \frac{\p f}{\p c})  \na c  \cdot\Phi =\int \na c  \cdot \na (\na c \cdot\Phi) =  -\int \mathbb{S}_{c} :\na \Phi,\ea\enn
which, along with \eqref{e5},  leads  to
\be\la{e9}\ba\int \left(\de \n ^{4} +\n^{2} \frac{\p f}{\p \n} \right)\div \Phi =\int \left(\mathbb{S}_{ns}+\mathbb{S}_{c}-\n  u \otimes u\right):\na \Phi -\int (\n g_{1}+g_{2})\cdot\Phi.\ea\ee

As a result of  \eqref{e8},  \eqref{e9}, and Theorem \ref{t4.1},   we have the following theorem.

  \begin{theorem} \la{t5.1}
  Under the same   conditions  in Theorem \ref{t4.1},  for any fixed $\de>0,$  the following system
   \begin{equation}\label{n7}
\left\{\ba
&\div(\n u)=0,\\
&\div  (\n u\otimes u)+\na \left(\de \n^{4} + \n^{2}  \frac{\p f}{\p\n}\right)
 = \div\left(\mathbb{S}_{ns}+\mathbb{S}_{c} \right)+\n g_{1}+g_{2},\\
 &\div(\n u  c)=\lap \mu,\\
&\n\mu= \n  \frac{\p f}{\p c}-\lap c,\ea \right.
\end{equation}
with  the boundary conditions \eqref{1a}  admits a  weak solution $(\n_{\de},u_{\de},\mu_{\de},c_{\de})$ which satisfies \eqref{e7b} and \eqref{e7c}.
  \end{theorem}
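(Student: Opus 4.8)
The plan is to verify that the weak solution $(\n,u,\mu,c)$ constructed in Theorem \ref{t4.1} is itself a weak solution of the system \eqref{n7}, once the convective term in the Cahn--Hilliard equation and the capillary contribution in the momentum equation are put into their final forms. The conserved quantities \eqref{e7b}, the regularity \eqref{e7c}, the renormalized continuity equation \eqref{e7}, and the energy inequality \eqref{e7a} are then inherited verbatim from Theorem \ref{t4.1}, so only $\eqref{n7}_{2}$ and $\eqref{n7}_{3}$ require work, and these are exactly what the identities \eqref{e8} and \eqref{e9} — derived just above the statement — provide.

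First I would recast the Cahn--Hilliard convection. Since $\eqref{n6}_{1}$ holds in the distribution sense and extends by density to test functions with gradient in $L^{30/19}(\om)$, the function $c\phi$ with $\phi\in C^{\infty}(\overline{\om})$ is admissible, so $\int\n u\cdot\na(c\phi)=0$; subtracting this from $\eqref{e4}_{1}$ yields $\int\na\mu\cdot\na\phi=\int\n c\,u\cdot\na\phi$, i.e. the weak form of $\div(\n u c)=\lap\mu$. This is \eqref{e8}.

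Next comes the capillary stress, which is the only step needing genuine care. From $\eqref{n6}_{4}$ one has $\lap c=\n\frac{\p f}{\p c}-\n\mu$; using \eqref{b1}, \eqref{b2}, $\n\in L^{5}$ and $\mu\in H^{1}\hookrightarrow L^{6}$ one checks that this right-hand side lies in $L^{p}(\om)$ for some $p>\frac{6}{5}$, hence $c\in W^{2,p}(\om)$ by elliptic $L^{p}$ theory and $\na c\otimes\na c,\ |\na c|^{2}\in L^{1}(\om)$. For smooth $c$ one has the pointwise identity $-\lap c\,\na c=\div\mathbb{S}_{c}$ with $\mathbb{S}_{c}=-\na c\otimes\na c+\frac12|\na c|^{2}\mathbb{I}$, and this passes to the $W^{2,p}$ limit along a mollified sequence; therefore, testing $\eqref{e4}_{2}$ with $\phi=\na c\cdot\Phi$ for $\Phi\in C_{0}^{\infty}(\om,\r)$ (again legitimate by approximation) gives $\int\bigl(\n\mu-\n\frac{\p f}{\p c}\bigr)\na c\cdot\Phi=\int\na c\cdot\na(\na c\cdot\Phi)=-\int\mathbb{S}_{c}:\na\Phi$. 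Substituting into \eqref{e5} produces \eqref{e9}, which is precisely the weak momentum balance of \eqref{n7} with the capillary stress included. Setting $(\n_{\de},u_{\de},\mu_{\de},c_{\de}):=(\n,u,\mu,c)$ and collecting \eqref{e8}, \eqref{e9}, $\eqref{n6}_{1}$, $\eqref{n6}_{4}$, together with \eqref{e7}, \eqref{e7a}, \eqref{e7b}, \eqref{e7c}, then finishes the proof.

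The main obstacle, such as it is, is the regularity bootstrap for $c$ together with the attendant integration by parts: one must confirm that the integrability of $\n\mu$ (hence of $\lap c$) is strong enough to guarantee $c\in W^{2,p}$ with $p>\frac{6}{5}$, so that $\mathbb{S}_{c}\in L^{1}(\om)$ and the identity $-\lap c\,\na c=\div\mathbb{S}_{c}$, tested against the non-smooth function $\na c\cdot\Phi$, can be rigorously justified by a density argument; once this is in place, the remainder of the statement is a direct transcription of Theorem \ref{t4.1}.
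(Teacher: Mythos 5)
Your proposal is correct and follows essentially the same route as the paper: the paper derives \eqref{e8} by inserting $c\phi$ into the weak continuity equation, derives \eqref{e9} by first establishing $c\in W^{2,p}$ (via $\eqref{n6}_{4}$, \eqref{b1}, \eqref{3.30}, \eqref{e7c}) and then testing $\eqref{e4}_{2}$ with $\phi=\na c\cdot\Phi$, and finally observes that Theorem \ref{t5.1} follows directly from \eqref{e8}, \eqref{e9} and Theorem \ref{t4.1}. Your write-up makes explicit a couple of density/integrability checks that the paper leaves as "by approximation if necessary," but the underlying argument is the same.
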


We will  prove Theorem \ref{t} by taking $\de\rightarrow 0$ in the solutions     $(\n_{\de},u_{\de},\mu_{\de},c_{\de})$  obtained in Theorem \ref{t5.1}.
Firstly,   we derive some  refined   estimates on  $(\n_{\de},u_{\de},\mu_{\de},c_{\de})$ which are uniform  in $\de$.

\begin{lemma}\la{lem5.2} Let $(\n_{\de},u_{\de},\mu_{\de},c_{\de})$  be a solution obtained in Theorem \ref{t5.1}.  Assume  that \eqref{ss} is satisfied.    Then there is some $p>\frac{6}{5}$ and $\th>0$ with $\g+\th>2$ such that       \be\la{ss1} \de \|\n_{\de}^{4+\th}\|_{L^{1}}+\|\n_{\de}^{2+\th} \frac{\p f}{\p \n_{\de}}\|_{L^{1}}+\|u_{\de}\|_{H^{1}_{0}}+\|\mu_{\de}\|_{H^{1}}+\|c_{\de}\|_{W^{2,p}}\le C,\ee
where, and in what follows,  the   constant   $C$ is independent of $\de$.
    \end{lemma}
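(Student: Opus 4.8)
\emph{Overview.} The plan is to obtain \eqref{ss1} by closing, uniformly in $\de$, a coupled system of a priori inequalities for $\|\n_\de\|_{L^{\g+\th}}$, $\de\|\n_\de^{4+\th}\|_{L^1}$, $\|u_\de\|_{H^1_0}$, $\|\mu_\de\|_{H^1}$ and $\|c_\de\|_{W^{2,p}}$. The mechanism that makes this possible is the sign structure $\n_\de^{2}\frac{\p f}{\p\n_\de}=(\g-1)\n_\de^{\g}+H_1(c_\de)\n_\de\ge(\g-1)\n_\de^{\g}$ recorded in Remark \ref{r2.1}; the point to keep in mind throughout is that \emph{every} absorption must be carried out with a constant independent of $\de$, since the artificial term $\de\|\n_\de^{4+\th}\|_{L^1}$ vanishes in the limit and cannot serve as a buffer. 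This is exactly why the restriction \eqref{ss} on $\g$ enters.

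\emph{Soft estimates.} From the energy inequality \eqref{e7a} the only delicate term is $\int\n_\de g_1\cdot u_\de$. When $\na\times g_1\equiv0$ and $\om$ is simply connected, write $g_1=\na G$ with $G\in W^{1,\infty}(\om)$; using $\eqref{n7}_1$ (that is, $\div(\n_\de u_\de)=0$) and $u_\de|_{\p\om}=0$ gives $\int\n_\de g_1\cdot u_\de=-\int G\,\div(\n_\de u_\de)=0$, so \eqref{e7a} yields $\|u_\de\|_{H^1_0}+\|\na\mu_\de\|_{L^2}\le C$; otherwise it only yields $\|u_\de\|_{H^1_0}^2+\|\na\mu_\de\|_{L^2}^2\le C(1+\|\n_\de\|_{L^{6/5}}^2)$. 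Testing $\eqref{n7}_4$ by the constant $1$ gives $\int\n_\de\mu_\de=\int\n_\de\frac{\p f}{\p c_\de}\le C(1+\|\n_\de\ln\n_\de\|_{L^1})$, after which, exactly as in \eqref{bb12} and \eqref{w7}, the Poincar\'e inequality together with $\|\n_\de\|_{L^1}=m_1$ and $\int\n_\de c_\de=m_2$ fixes the means of $\mu_\de$ and $c_\de$ and controls $\|\mu_\de\|_{H^1}$ and $\|c_\de\|_{L^q}$ ($q\le6$) by $\|u_\de\|_{H^1_0}$ and by sub-linear powers of $\|\n_\de\|_{L^{\g+\th}}$. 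Finally, read $\eqref{n7}_4$ as $-\lap c_\de=\n_\de\mu_\de-\n_\de\frac{\p f}{\p c_\de}$ with $|\n_\de\frac{\p f}{\p c_\de}|\le C\n_\de(1+|\ln\n_\de|)$: as soon as $\n_\de\in L^{\g+\th}$ with $\g+\th>2$ is available one has $\n_\de(1+|\ln\n_\de|)\in L^2$ and $\n_\de\mu_\de\in L^p$ with $\frac1p=\frac16+\frac1{\g+\th}<\frac56$, so the $L^p$ theory for the Neumann problem gives $c_\de\in W^{2,p}$ with $p>\frac65$ and $\na c_\de\in L^q$ for some $q\ge3$.

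\emph{The hard estimate.} It remains to prove $\|\n_\de\|_{L^{\g+\th}}+\de\|\n_\de^{4+\th}\|_{L^1}\le C$ for a suitable $\th>0$. For fixed $\de$ one has $\n_\de\in L^5$ by Theorem \ref{t5.1}, so the Bogovskii operator \eqref{05} applies. If $\g>2$, test the momentum equation in \eqref{n7} with $\mathcal{B}(\n_\de-\n_0)$; dropping $\de\n_\de^5\ge0$ and $H_1\n_\de^2\ge0$ gives $(\g-1)\int\n_\de^{\g+1}\le\text{(RHS)}$, whose convective, viscous and forcing terms are bounded as in \eqref{b5}, and whose capillary term $\int\mathbb{S}_c:\na\mathcal{B}(\n_\de-\n_0)$ is controlled, via the $W^{2,p}$-bound above, by $C\|\na c_\de\|_{L^6}^2\|\n_\de\|_{L^{3/2}}$. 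Since $\g+1>3$, every $L^r$-norm of $\n_\de$ on the right interpolates between $L^1$ and $L^{\g+1}$, and the strict inequality $\g>2$ is what keeps the resulting powers below $\g+1$, so Young's inequality closes the bound with a $\de$-independent constant and any small $\th>0$. If only $\g>5/3$ (hence $g_1$ curl-free and $\|u_\de\|_{H^1_0}\le C$), $\mathcal{B}(\n_\de-\n_0)$ is no longer enough; instead one adapts the weighted (local) pressure estimates of \cite{freh,mpm}: weighted Bogovskii test functions built from powers of distance functions produce, for each $x_0\in\om$, a pointwise bound for $\n_\de^{2}\frac{\p f}{\p\n_\de}(x_0)$ by Riesz potentials of $\n_\de|u_\de|^2$, $|\na u_\de|$, $|\mathbb{S}_c|$ and of the pressure itself, and a Hardy--Littlewood--Sobolev bootstrap upgrades $\|\n_\de\|_{L^1}=m_1$ to $\|\n_\de\|_{L^{\g+\th}}\le C$ for some $\th>0$ with $\g+\th>2$, uniformly in $\de$, with $\de\|\n_\de^{4+\th}\|_{L^1}$ bounded as a by-product. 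Inserting $\n_\de\in L^{\g+\th}$ back into the soft estimates then makes $\|u_\de\|_{H^1_0}$, $\|\mu_\de\|_{H^1}$ and $\|c_\de\|_{W^{2,p}}$ uniformly bounded, which is \eqref{ss1}.

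\emph{Main obstacle.} The crux is the hard estimate in the range $5/3<\g\le2$: the weighted pressure estimate has to be carried out in the presence of the nonlinear capillary stress $\mathbb{S}_c$, whose control feeds back through the elliptic regularity of $c_\de$, which in turn requires $\n_\de\ln\n_\de$ in a sufficiently good Lebesgue space --- a feedback loop that only closes once $\g+\th>2$ --- while at the same time every inequality must remain uniform in $\de$, so the vanishing term $\de\n_\de^4$ cannot be used to absorb anything. A second, more technical difficulty, absent from the single-fluid theory of \cite{freh,mpm}, is to handle the boundary terms that the weighted test functions generate in the Cahn-Hilliard part of the system under the Neumann conditions; these are dealt with using the mass identities \eqref{e7b} rather than by Poincar\'e alone.
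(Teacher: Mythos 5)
Your \emph{soft estimates} paragraph and the broad outline of the weighted--potential strategy are consistent with the paper, but the way you split the two $\gamma$-regimes is inverted, and the ``easy'' branch does not close. The paper does \emph{not} dispatch the case $\na\times g_1\neq 0$, $\g>2$ by testing with $\mathcal{B}(\n_\de-\n_0)$ and interpolation alone. In that case the energy inequality only gives (cf.\ \eqref{r3}) $\|u_\de\|_{H^1_0}+\|\na\mu_\de\|_{L^2}\le C\bigl(1+\|\n_\de^{\bar b}|u_\de|^2\|_{L^1}^{1/(6\bar b-2)}\bigr)$, not a fixed bound, so the convective and capillary terms cannot be ``bounded as in \eqref{b5}'': that estimate crucially used the artificial term $\de\|\n_\ep\|_{L^5}^5$ to absorb, which is precisely the buffer your lemma must live without. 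If one nevertheless tries to close via interpolation between $L^1$ and $L^{\g+1}$, the chain $\int\mathbb{S}_c:\na\mathcal{B}(\n_\de-\n_0)\le C\|\na c_\de\|_{L^6}^2\|\n_\de\|_{L^{3/2}}$, $\|\na c_\de\|_{L^6}\le C\|c_\de\|_{W^{2,2}}$, $\|c_\de\|_{W^{2,2}}\lesssim\|\n_\de\|_{L^3}\|\mu_\de\|_{L^6}+\dots$, $\|\mu_\de\|_{L^6}\le C(1+\|\n_\de\|_{L^{6/5}})^2$ (from \eqref{3.30} and the crude form of \eqref{r1}) produces a total power $\tfrac{7(\g+1)}{3\g}$ of $\|\n_\de\|_{L^{\g+1}}$ on the right-hand side, which is below $\g+1$ only for $\g>7/3$. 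So the argument you sketch does not cover $2<\g\le 7/3$.

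The paper avoids this by working, in \emph{both} $\g$-regimes, with the single weighted quantity $\|\n_\de^{\bar b}(|u_\de|^2+\mu_\de^2)\|_{L^1}$, and the device that makes the capillary term manageable is the H\"older identity behind \eqref{r18a}: with $\bar b=3-2/s$ one gets the $\|\mu_\de\|_{L^6}$-free bound $\|\n_\de\mu_\de\|_{L^s}^2\le C\|\n_\de^{\bar b}\mu_\de^2\|_{L^1}$, which feeds directly into \eqref{q5.9ab} and \eqref{q5.14}. This, combined with the weighted potential estimate of Proposition \ref{c1} (boundary weight $\xi$ built from a $C^2$ distance function, interior cut-off near the boundary, and the Green-function step \eqref{z1}--\eqref{z3}), gives the self-consistent inequality $\|\n_\de^{\bar b}(|u_\de|^2+\mu_\de^2)\|_{L^1}\le C+C\|\n_\de^{\bar b}(|u_\de|^2+\mu_\de^2)\|_{L^1}^{\beta}$. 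The only thing that distinguishes the two cases in \eqref{ss} is the form of $\beta$: when $\na\times g_1\neq0$, the extra term $\tfrac{1}{3\bar b-1}$ from \eqref{r3} pushes the threshold to $\g>2$, while when $g_1$ is curl-free the energy directly gives $\|u_\de\|_{H^1_0}+\|\na\mu_\de\|_{L^2}\le C$ and $\beta<1$ holds down to $\g>5/3$. In other words: the weighted machinery is \emph{not} a fallback reserved for $5/3<\g\le 2$; it is the whole proof, and the two branches differ only in whether the velocity energy bound is unconditional. You should replace the $\g>2$ ``simple Bogovskii'' branch with the same weighted-quantity bootstrap, at which point the structure you describe for the curl-free case handles both regimes.

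Two smaller points. First, testing with $\mathcal{B}(\n_\de^\th-|\om|^{-1}\int\n_\de^\th)$ for small $\th$ (as in \eqref{q5.3}) rather than $\mathcal{B}(\n_\de-\n_0)$ is what makes $\th\to0$ available at the end of Step 4 and lets both threshold conditions \eqref{ref4}, \eqref{ref7} tend to their sharp limits; the exponent $\th=1$ you implicitly use wastes room. Second, Proposition \ref{c1} does not produce a pointwise bound on the pressure nor invoke Hardy--Littlewood--Sobolev; it produces the weighted integral $\int\n_\de^\g|x-x^*|^{-\a_0}\,dx$ uniformly in $x^*\in\overline{\om}$, and the self-consistency is then obtained through an $L^\infty$ bound on the Neumann solution $h$ of $\lap h=\n_\de^{\bar b}-\mathfrak m$. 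Your description of the mechanism is close in spirit but not the one the paper actually uses.
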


  \noindent {\it Proof.}   We shall borrow  some  ideas  from  \cite{freh,mpm} to give a
 weighted estimate on  pressure.
 Owing to \eqref{0} and   \eqref{ss},  it follows from  \eqref{e7a} that
\be\la{r1}\ba  \lambda_{1}\int |\na u_{\de}|^{2} + \int |\na \mu_{\de}|^{2}
&\le  \int \left(\n_{\de} g_{1} +g_{2}\right) \cdot u_{\de}\\&  \le \textbf{1}_{g_{1}}\|\n_{\de}u_{\de}\|_{L^{1}}\|g_{1}\|_{L^{\infty}}+\|u_{\de}\|_{H_{0}^{1}}\|g_{2}\|_{L^{\frac{6}{5}}},\ea\ee where $\textbf{1}_{g_{1}}=1$ if $\na \times g_{1}\neq 0$ and $\textbf{1}_{g_{1}}=0$ if $\na \times g_{1}=0.$  Taking \be\la{z10} b\ge\frac{3s-2}{s}\,\,\,{\rm and }\,\,\,s\in   \left[1,\frac{6(\g+\th)}{5\g+2\th}\right],\ee  we have \be\la{r18w}\ba  \|\n_{\de}u_{\de}\|_{L^{s}}^{s}&=\int  \left(\n_{\de}^{b}|u_{\de}|^{2}\right)^{\frac{7s-6}{6b-4}}\left( |u_{\de}|^{6}\right)^{\frac{2+(b-3)s}{6b-4}}\left(\n_{\de} \right)^{\frac{(6-s)b-4s}{6b-4}}\\
  &\le  \|\n_{\de}^{b}|u_{\de}|^{2}\|_{L^{1}}^{\frac{7s-6}{6b-4}}\| u_{\de}\|_{L^{6}}^{\frac{6(2+(b-3)s)}{6b-4}}\|\n_{\de} \|_{L^{1}}^{\frac{(6-s)b-4s}{6b-4}}.
 \ea\ee
Substituting   \eqref{r18w}     into   \eqref{r1}  gives rise to
 \be\la{r3}\ba &\|u_{\de}\|_{H_{0}^{1}}+\|\na \mu_{\de}\|_{L^{2}} \le C \left(\|\n_{\de}^{b}|u_{\de}|^{2}\|_{L^{1}}^{\frac{\textbf{1}_{g_{1}}}{6b-2}}+1\right).\ea\ee

{\underline {\it The case of  $\na \times g_{1}\neq 0$.}}
%The  detailed  process
The estimate is divided into several steps.

{\it Step 1.} Let $\mathcal{B}$ be the Bogovskii operator defined in \eqref{05}.
Choosing       $\Phi=\mathcal{B}(\n_{\de}^{\th}-|\om|^{-1}\int_{\om}\n_{\de}^{\th})$  in \eqref{e9}  with  $\th=\th(\g)$   small and  to be determined, we get
\be\la{q5.3}\ba  & \int \left(\de \n_{\de}^{4}+\n_{\de}^{2} \frac{\p f}{\p \n_{\de}}\right)\n_{\de}^{\th}\\
&= \left(|\om|^{-1}\int \n_{\de}^{\th}\right)\int \left(\de \n_{\de}^{4}+\n_{\de}^{2} \frac{\p f}{\p \n_{\de}}\right) + \int \mathbb{S}_{ns}:\na \Phi- (\n_{\de} g_{1}+g_{2})\cdot\Phi\\
&\quad-\int \n_{\de} u_{\de}\otimes u_{\de} :\na \Phi +\int \mathbb{S}_{c} :\na \Phi\\
&= \sum_{i=1}^{4}K_{i}.
\ea\ee
Firstly,  by  \eqref{b1} and \eqref{b2},
we have  \be\la{q5.10}\ba
 K_{1}  &\le C\int \left(\de \n_{\de}^{4}+\n_{\de}^{2} \frac{\p f}{\p \n_{\de}}\right)\\
 &\le \frac{1}{8}\int  \left(\de \n_{\de}^{4+\th}+(\g-1)\n_{\de}^{\g+\th}+\n_{\de}^{1+\th}H_{1}(c_{\de})\right)+C\\
&=\frac{1}{8} \int \left(\de \n_{\de}^{4}+\n_{\de}^{2} \frac{\p f}{\p \n_{\de}}\right)\n_{\de}^{\th}
+C.\ea\ee
Secondly, thanks to \eqref{r3} and \eqref{ss},
 \be\la{q5.13}\ba K_{2}&=\int \mathbb{S}_{ns}:\na \Phi- (\n_{\de} g_{1}+g_{2})\cdot\Phi\\
 &\le \left(\|\na u_{\de}\|_{L^{2}}+\|\n_{\de}\|_{L^{\frac{6}{5}}}\|g_{1}\|_{L^{\infty}}+\|g_{2}\|_{L^{\frac{6}{5}}}\right)\|\na \Phi\|_{L^{2}}\\
 &\le C\left(\|\n_{\de}^{b}|u_{\de}|^{2}\|_{L^{1}}^{\frac{\textbf{1}_{g_{1}}}{6b-2}}+ \|\n_{\de}\|_{L^{\frac{6}{5}}}+1\right)\|\n_{\de}^{\th}\|_{L^{2}}\\
 &\le  \frac{1}{8}\int_{\om}\n_{\de}^{2+\th} \frac{\p f}{\p\n_{\de}}+C\|\n_{\de}^{b}|u_{\de}|^{2}\|_{L^{1}}^{\frac{1}{6b-2}}+C. \ea\ee
Next, by \eqref{r3}, one has  %direct  calculation shows
 \be\la{x2}\ba \|\n_{\de}|u_{\de}|^{2}\|_{L^{t}}^{t}&=\int \left(\n_{\de}^{b}|u_{\de}|^{2}\right)^{\frac{4t-3}{(3b-2)}}\left(|u_{\de}|^{6}\right)^{\frac{1-t(2-b)}{(3b-2)}}\n_{\de}^{\frac{3b-t(2+b)}{(3b-2)}}\\
 &\le \|\n_{\de}^{b}|u_{\de}|^{2}\|_{L^{1}}^{\frac{4t-3}{(3b-2)}}\|u_{\de}\|_{H_{0}^{1}}^{\frac{6(1-t(2-b))}{(3b-2)}}\|\n_{\de}\|_{L^{1}}^{\frac{3b-t(2+b)}{(3b-2)}}\\
 &\le C \left(1+\|\n_{\de}^{b}|u_{\de}|^{2}\|_{L^{1}}^{\frac{5t-3}{(3b-1)}} \right).\ea\ee
Let   $t=\frac{\g+\th}{\g}$ in \eqref{x2}, then we have
 \be\la{q5.12}\ba K_{3}&=- \int \n_{\de} u_{\de}\otimes u_{\de} :\na \Phi\\
&\le\|\na \Phi\|_{L^{\frac{\g+\th}{\th}}} \|\n_{\de}|u_{\de}|^{2}\|_{L^{\frac{\g+\th}{\g}}}\\
&\le C\|\n_{\de}\|_{L^{\g+\th}}^{\th}\|\n_{\de}|u_{\de}|^{2}\|_{L^{\frac{\g+\th}{\g}}} \\
&\le \frac{1}{8}\int_{\om}\n_{\de}^{2+\th}\frac{\p f}{\p \n_{\de}}+C\left(1+\|\n_{\de}^{b}|u_{\de}|^{2}\|_{L^{1}}^{\frac{2\g+5\th}{\g(3b-1)}} \right).\ea\ee
Finally,  if we replace $u_{\de}$ with $\mu_{\de}$ and take $\bar{b}=3-\frac{2}{s}$ in \eqref{r18w},   we find
\be\la{r18a}\ba \|\n_{\de} \mu_{\de}\|_{L^{s}}^{s}
  &\le  C \|\n_{\de}^{\bar{b}}\mu_{\de}^{2}\|_{L^{1}}^{\frac{7s-6}{6\bar{b}-4}}.\ea\ee
 %  where  we have used
 %\bnn \|\mu_{\de}\|_{L^{6}}   \le C\left(1+\|\n_{\de}^{b}|u_{\de}|^{2}\|_{L^{1}}^{\frac{\textbf{1}_{g_{1}}}{6b-2}}\right)\left(1 +  \|\n_{\de}\|_{L^{2}}\right), \enn which comes from   \eqref{3.30} and    \eqref{r3}. Taking  ,  one has
Taking  $s=\frac{6(\g+\th)}{5\g+2\th}$ in \eqref{r18a},  we deduce
   \be\la{q5.9ab} \ba   \|\na^{2} c_{\de}\|_{L^{\frac{6(\g+\th)}{5\g+2\th}}}^{\frac{6(\g+\th)}{5\g+2\th}}
&\le C \|\lap c_{\de}\|_{L^{\frac{6(\g+\th)}{5\g+2\th}}}^{\frac{6(\g+\th)}{5\g+2\th}} \\
&\le C \|\n_{\de}\mu_{\de}+\n_{\de} \frac{\p f}{\p c_{\de}}\|_{L^{\frac{6(\g+\th)}{5\g+2\th}}}^{\frac{6(\g+\th)}{5\g+2\th}} \\
&\le C+C\|\n_{\de}^{\bar{b}}  \mu_{\de}^{2}\|_{L^{1}}^{\frac{3(\g+\th)}{5\g+2\th}} +C\|\n_{\de}\ln \n_{\de}\|_{L^{\frac{6(\g+\th)}{5\g+2\th}}}^{\frac{6(\g+\th)}{5\g+2\th}},\ea\ee where the exponents in the last inequality are due to  \be\la{bb5} s=\frac{6(\g+\th)}{5\g+2\th}\quad{\rm and}\quad\bar{b}=\frac{4\g+7\th}{3(\g+\th)}.\ee
  With the help of  \eqref{q5.9ab} and $\|\n_{\de}\|_{L^{1}}=m_{1},$ we have the following estimate,
    \be\la{q5.14}\ba K_{4}&\le  C\|\na \Phi\|_{L^{\frac{\g+\th}{\th}}}\|\na c_{\de}\|_{L^{\frac{2(\g+\th)}{\g}}}^{2}\\
 %&\le C\|\n_{\de}\|_{L^{(\g+\th)}}^{\th} \|\na c_{\de}\|_{L^{\frac{2(\g+\th)}{\g}}}^{2}\\
 &\le C\|\n_{\de}\|_{L^{(\g+\th)}}^{\th} \left(\|\na^{2} c_{\de}\|_{L^{\frac{6(\g+\th)}{5\g+2\th}}}^{\frac{6(\g+\th)}{5\g+2\th}}\right)^{\frac{5\g+2\th}{3(\g+\th)}}\\
 &\le  C+\frac{1}{8}\int \n_{\de}^{2+\th} \frac{\p f}{\p \n_{\de}} +C\|\n_{\de}^{\bar{b}}\mu_{\de}^{2}\|_{L^{1}}^{\frac{\g+\th}{\g}}.\ea\ee
In conclusion,  substituting \eqref{q5.10}-\eqref{q5.12},   \eqref{q5.14} back into \eqref{q5.3} gives rise to
 \be\la{r7}\ba \int \left(\de \n_{\de}^{4}+\n_{\de}^{2} \frac{\p f}{\p \n_{\de}}\right)\n_{\de}^{\th}
 & \le C+C \|\n_{\de}^{\bar{b}}|u_{\de}|^{2}\|_{L^{1}}^{\frac{2\g+5\th}{\g(3\bar{b}-1)}}+C \|\n_{\de}^{\bar{b}}\mu_{\de}^{2}\|_{L^{1}}^{\frac{\g+\th}{\g}}\\
 %&\le C+C \|\n_{\de}^{\bar{b}}(|u_{\de}|+|\mu_{\de}|)^{2}\|_{L^{1}}^{\frac{\g+\th}{\g}}\\
 &\le C+C \|\n_{\de}^{\bar{b}}(|u_{\de}|^{2}+\mu_{\de}^{2})\|_{L^{1}}^{\frac{\g+\th}{\g}},\ea\ee
from  \eqref{bb5} and    $\frac{2\g+5\th}{\g(3\bar{b}-1)}\le\frac{2\g+5\th}{\g(3\bar{b}-2)}= \frac{\g+\th}{\g}$ . %have  been used.
\medskip

 {\it Step 2.} We show the following estimate:

\begin{pro} \la{c1}For  any fixed $\a_{0}\in (0,1)$ and $x^{*}\in \overline{\om},$ there  is some  constant  $C$    independent of $\de$ or $x^{*}$,   such that    \be\la{r5}\ba  \int \frac{\n_{\de}^{\g}(x)}{|x-x^{*}|^{\a_{0}}}dx\le C\left( 1+\|\n_{\de}^{\bar{b}}(|u_{\de}|^{2}+\mu_{\de}^{2})\|_{L^{1}}\right),\ea\ee
with $\bar{b}$ being   defined in \eqref{bb5}.
\end{pro}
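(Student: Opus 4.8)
The plan is to insert into the weak momentum identity \eqref{e9} a test field built from the singular vector field $\vec h_{x^{*}}(x)=(x-x^{*})|x-x^{*}|^{-\a_{0}}$, $x^{*}\in\overline{\om}$, whose divergence is $\div\vec h_{x^{*}}=(3-\a_{0})|x-x^{*}|^{-\a_{0}}>0$. Since $\a_{0}\in(0,1)$, one has $\|\vec h_{x^{*}}\|_{L^{\infty}(\om)}\le(\mathrm{diam}\,\om)^{1-\a_{0}}$ and $\|\na\vec h_{x^{*}}\|_{L^{q}(\om)}\le C(\om,q,\a_{0})$ for every $q\in[1,3/\a_{0})$, all bounds being independent of $x^{*}$ by translation invariance and boundedness of $\om$. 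Because $\vec h_{x^{*}}$ is not compactly supported, I would localize: on a fixed interior region I test \eqref{e9} with $\Phi=\zeta\vec h_{x^{*}}$ for a cut-off $\zeta\in C_{0}^{\infty}(\om)$, whereas in a boundary collar, covered by finitely many charts flattening $\p\om$ (this is where $\p\om\in C^{2}$ enters), I replace the normal component of $\vec h_{x^{*}}$ by a correction vanishing on $\p\om$, so that — using $u_{\de}|_{\p\om}=0$ together with $\p c_{\de}/\p n=\p\mu_{\de}/\p n=0$ — the boundary integrals produced by the convective term $\n_{\de}u_{\de}\otimes u_{\de}$ and by the capillary stress $\mathbb{S}_{c}$ drop out. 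With such a $\Phi$, the divergence of its pressure part reproduces $\int\n_{\de}^{\g}|x-x^{*}|^{-\a_{0}}$ up to lower order terms, once one uses $\n_{\de}^{2}\,\p f/\p\n_{\de}=(\g-1)\n_{\de}^{\g}+H_{1}(c_{\de})\n_{\de}\ge(\g-1)\n_{\de}^{\g}$ — here the positivity of $H_{1}$ secured in Remark \ref{r2.1} is essential — together with $\de\n_{\de}^{4}\ge0$.

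The remaining terms on the right of \eqref{e9} are then estimated one by one, just as in Step~1. The viscous term is bounded by $\|\na u_{\de}\|_{L^{2}}\|\na\Phi\|_{L^{2}}$ and absorbed through the energy estimate \eqref{r3}; the convective term by a H\"older splitting $\|\n_{\de}|u_{\de}|^{2}\|_{L^{t}}\|\na\Phi\|_{L^{t/(t-1)}}$ with $t=(\g+\th)/\g$ and the interpolation \eqref{x2}; the force term by $(\|\n_{\de}\|_{L^{6/5}}\|g_{1}\|_{L^{\infty}}+\|g_{2}\|_{L^{6/5}})\|\Phi\|_{L^{6}}$, which is finite since $\Phi\in L^{\infty}(\om)\subset L^{6}(\om)$; and the capillary term by a H\"older splitting of $\mathbb{S}_{c}$ against $\na\Phi$ in which $\na c_{\de}$ is controlled, as in \eqref{q5.14}, by means of \eqref{r18a} and the elliptic bound \eqref{q5.9ab}. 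Each of these contributions is at most $C+C\|\n_{\de}^{\bar b}(|u_{\de}|^{2}+\mu_{\de}^{2})\|_{L^{1}}^{\sigma}$ for some exponent $\sigma\in(0,1]$ — which is exactly what pins down the coupling exponent $\bar b$ in \eqref{bb5} and the admissible range of $\th$ — so that, after Young's inequality, the right-hand side collapses to $C+C\|\n_{\de}^{\bar b}(|u_{\de}|^{2}+\mu_{\de}^{2})\|_{L^{1}}$ and \eqref{r5} follows. All constants depend only on $\om$, $\a_{0}$, $\g$, $\overline H$, $\lambda_{1}$, $\|g_{i}\|_{L^{\infty}}$ and $m_{1}$, never on $\de$ or $x^{*}$.

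The delicate point, and the one I expect to be the genuine obstacle, is the behaviour near $\p\om$: when $x^{*}$ lies on or close to the boundary the weight $|x-x^{*}|^{-\a_{0}}$ is singular up to $\p\om$, so the collar cannot be dismissed as a lower order term, the test field there must be honestly adapted to the geometry, and the surviving boundary contributions of $\mathbb{S}_{ns}$ and of the pressure must be controlled using the $C^{2}$ regularity of $\p\om$. This is precisely where we follow, and have to extend, the weighted-pressure technique of \cite{freh,mpm}; compared with the pure compressible Navier--Stokes setting treated there, the new difficulties are the Neumann conditions $\p c/\p n=\p\mu/\p n=0$ and the strongly nonlinear capillary stress $\mathbb{S}_{c}=-\na c\otimes\na c+\frac12|\na c|^{2}\mathbb{I}$, whose gradient lies only in a low Lebesgue class — keeping its contribution compatible with the second-derivative bound on $c$ from \eqref{q5.9ab} is what ultimately forces the choice of $\bar b$ and the restriction $\g+\th>2$.
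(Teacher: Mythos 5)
Your high-level strategy is the right one — test \eqref{e9} with a field whose divergence is the singular weight $|x-x^{*}|^{-\a_{0}}$, localize in the interior, and adapt the field near the boundary — and you correctly identify that the boundary collar is where the difficulty concentrates. But the sentence ``I replace the normal component of $\vec h_{x^{*}}$ by a correction vanishing on $\p\om$'' is where your proposal stops and the paper's proof actually begins, and the reasoning you attach to it is not the right reasoning. The boundary integrals are not the issue: any admissible test field vanishes on $\p\om$, so there are no boundary integrals to worry about, and the Neumann conditions on $c$ and $\mu$ play no role here (they are only used for the elliptic bound \eqref{q5.9ab}, which you invoke separately). The real constraints on the boundary-adapted test field $\xi$ are (i) $\xi|_{\p\om}=0$ yet $\div\xi\ge -C+C|x-x^{*}|^{-\a_{0}}$ near $\p\om$, and (ii) $\int\n_{\de}u_{\de}\otimes u_{\de}:\na\xi$ must come out with the right sign, i.e.\ bounded below by $-C\|\n_{\de}|u_{\de}|^{2}\|_{L^{1}}$ plus a positive weighted term. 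The paper achieves both with the explicit construction
$\xi^{i}(x)=\phi(x)\,\p_{i}\phi(x)\bigl(\phi(x)+|x-x^{*}|^{\frac{2}{2-\a_{0}}}\bigr)^{-\a_{0}}$
(equation \eqref{r10}), where $\phi$ is essentially the distance to $\p\om$; the carefully tuned exponent $\frac{2}{2-\a_{0}}$ together with $\phi+|x-x^{*}|^{\frac{2}{2-\a_{0}}}\le C|x-x^{*}|$ is exactly what produces the lower bound \eqref{x8} on $\div\xi$, and the geometric identity $\p_{j}\p_{i}\phi=\frac{\p_{i}(x-\tilde x)^{j}}{\phi}-\frac{\p_{j}\phi\p_{i}\phi}{\phi}$ from \eqref{bb} is what makes the convective term in \eqref{x9} controllable with the right sign. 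None of this follows from ``flattening charts and subtracting the normal component''; it is an explicit global construction, and without it the proposal cannot be completed.

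A second gap: for the claim to hold uniformly in $x^{*}\in\overline\om$ one must also treat the intermediate case of an interior $x^{*}$ whose distance to $\p\om$ is comparable to the cut-off radius, so that the annular error $\int_{r<|x-x^{*}|<2r}$ produced by $\na\chi$ is not lower order. The paper handles this in its Case~2(ii) by projecting $x^{*}$ to the nearest boundary point $\tilde x^{*}$, using $4|x-x^{*}|\ge|x-\tilde x^{*}|$ for $x\notin B_{r}(x^{*})$ (inequality \eqref{bb1}), and feeding the annular integral back into the already-proved boundary-point estimate \eqref{x10}. Your proposal does not mention this step, and it is necessary precisely to make $C$ independent of $x^{*}$. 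The remainder of your sketch — the energy bound \eqref{r3} for the viscous term, the interpolation \eqref{x2} with $t=(\g+\th)/\g$ for the convective term, and the elliptic estimate \eqref{q5.9ab} plus \eqref{r18a} for the capillary stress — matches the paper and is fine.
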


\begin{proof}  We consider two cases.

 {{\it Case 1:  boundary point  $x^{*}\in \p\om.$}}
As in \cite{freh}, we introduce
\be\la{r10} \xi^{i}(x)=\phi(x)\p_{i}\phi(x)\left(\phi(x)+|x-x^{*}|^{\frac{2}{2-\a_{0}}}\right)^{-\a_{0}},\quad i=1, 2, 3,\ee
where the function $\phi(x)\in C^{2}(\overline{\om})$ satisfies the following properties:
\be\la{x7}\left\{\ba& \phi(x)>0\,\,\,{\rm in}\,\,\om\,\,\,{\rm and}\,\,\,\phi(x)=0\,\,\,{\rm on}\,\,\,\p\om,\\
&|\phi(x)|\ge k_{1} \,\,\,{\rm if}\,\,\,x\in \om\,\,\,{\rm and}\,\,\,dist(x,\,\p\om)\ge k_{2},\\
& |\na\phi(x)|\ge k_{1} \,\,\,{\rm if}\,\,\,x\in \om\,\,\,{\rm and}\,\,\,dist(x,\,\p\om)\le k_{2}, \ea\right.\ee
and the  constants $k_{i}>0$ are given.
\begin{remark}
The function $\phi(x)$ satisfying \eqref{x7} is in fact the distance function near the boundary with $C^{2}$ extension to the whole $\om.$ Moreover, for every  point $x\in \om$ near the boundary, there is a unique $\tilde{x}\in \p\om$ such that
\be\la{bb} \na \phi=\frac{x-\tilde{x}}{\phi(x)}\quad {\rm and}\quad \phi(x)=|x-\tilde{x}|.\ee
See, e.g., \cite[Exercise 1.15]{zie} for the detail.\end{remark}
 It is clear  that $\xi\in L^{\infty}(\om)$ and $\xi=0$ on  $\p\om.$  In addition, a direct computation yields
\be\la{r15} \ba\p_{j}\xi^{i}
 &=\frac{\phi\p_{j}\p_{i}\phi}{\left(\phi+|x-x^{*}|^{\frac{2}{2-\a_{0}}}\right)^{ \a_{0}}}+\frac{  \p_{j}\phi\p_{i}\phi}{\left(\phi+|x-x^{*}|^{\frac{2}{2-\a_{0}}}\right)^{\a_{0}}}\\
 &\quad- \a_{0}\frac{\phi\p_{i}\phi\p_{j}\phi}{\left(\phi+|x-x^{*}|^{\frac{2}{2-\a_{0}}}\right)^{\a_{0}+1}}- \a_{0}\frac{\phi\p_{i}\phi\p_{j}|x-x^{*}|^{\frac{2}{2-\a_{0}}}}{\left(\phi+|x-x^{*}|^{\frac{2}{2-\a_{0}}}\right)^{\a_{0}+1}}.\ea\ee
Thus,   $ |\na \xi|\in L^{q}$ for all $q\in [2,\frac{3}{\a_{0}})$ because
$|\p_{j}\xi^{i}|   \le C+C|x-x^{*}|^{-\a_{0}}.$ Due to   \eqref{x7}   and  $\frac{2}{2-\a_{0}}>1$, the following inequalities hold true:
\be\la{z009}   \phi<\phi+|x-x^{*}|^{\frac{2}{2-\a_{0}}} \le  C|x-x^{*}|.\ee
 With   \eqref{x7}-\eqref{z009},  one deduces that, for $dist (x,\,\p\om)\le k_{2},$
\be\la{x8}\ba \div \xi&\ge -C + \frac{1}{2(1-\a_{0})}\frac{|\na \phi|^{2}}{\left(\phi+|x-x^{*}|^{\frac{2}{2-\a_{0}}}\right)^{\a_{0}}} \ge -C + \frac{C}{|x-x^{*}|^{\a_{0}}}.\ea\ee
Take  $\Phi=\xi$ in \eqref{e9} to obtain
 \be\la{r11}\ba&\int \left(\de \n_{\de}^{4} +\n_{\de}^{2} \frac{\p f}{\p \n_{\de}}\right)\div \xi+ \int \n_{\de} u_{\de}\otimes u_{\de} :\na  \xi\\&=\int \mathbb{S}_{ns} :\na  \xi -\int \left(\n_{\de}g_{1}+g_{2}\right)\cdot \xi+\int \mathbb{S}_{c} :\na  \xi.\ea\ee

The  first two terms on the right-hand side of \eqref{r11}  satisfy
\be\la{r13}\ba &\left| \int \mathbb{S}_{ns} :\na  \xi -\int \left(\n_{\de}g_{1}+g_{2}\right)\cdot \xi\right|  \le C(\a_{0}) \left(\|\na u_{\de} \|_{L^{2}}+1\right).\ea\ee
%Next,  if \be\la{bb6} \g>\bar{b}\ge\frac{5}{3},\ee
%then, H$\ddot{o}$lder inequality shows
 %\bnn \|\n_{\de}\mu_{\de} \|_{L^{\frac{3}{2}}}^{2}\le \|\n_{\de}^{\bar{b}}|\mu_{\de}|^{2} \|_{L^{1}}\left(\int \n_{\de}^{6-3\bar{b}}\right)^{\frac{1}{3}}\le C\|\n_{\de}^{\bar{b}}|\mu_{\de}|^{2} \|_{L^{1}}\enn
 %and \bnn \|\n_{\de}\ln \n_{\de} \|_{L^{\frac{3}{2}}}^{2}\le  C + C\int\n_{\de}^{2} \frac{\p f}{\p \n_{\de}}.\enn
Next, let \be\la{bb7}\frac{\g+\th}{\th}<\frac{3}{\a_{0}},\ee where $\th$ and $\a_{0}$ will  be determined in  \eqref{ref4}.  One deduces \be\la{r13w}\ba  \left| \int \mathbb{S}_{c} :\na  \xi\right| & \le  C\|\na \xi \|_{L^{\frac{\g+\th}{\th}}} \|\na c_{\de} \|_{L^{\frac{2(\g+\th)}{\g}}}^{2} \le C\|\na^{2} c_{\de} \|_{L^{\frac{6(\g+\th)}{5\g+2\th}}}^{2}\\
 &\le C+C\|\n_{\de}^{\bar{b}}\mu_{\de}^{2} \|_{L^{1}} +C \int\n_{\de}^{2} \frac{\p f}{\p \n_{\de}},\ea\ee
 where the last inequality is from  \eqref{q5.9ab}.

Now let us focus on the left-hand side of \eqref{r11}.  Owing to  \eqref{x8},   one has   \be\la{r12}\ba
 \int \left(\de \n_{\de}^{4} +\n_{\de}^{2} \frac{\p f}{\p \n_{\de}}\right)\div \xi\ge -C \int \left(\de \n_{\de}^{4} +\n_{\de}^{2}\frac{\p f}{\p \n_{\de}}\right)+C \int_{\om\cap B_{k_{2}}(x^{*})} \frac{\left(\de \n_{\de}^{4} +\n_{\de}^{2}\frac{\p f}{\p \n_{\de}}\right)}{|x-x^{*}|^{\a_{0}}}.\ea\ee
By \eqref{bb}, one has $\p_{j}\p_{i}\phi=\frac{\p_{i}(x-\tilde{x})^{j}}{\phi}-\frac{\p_{j}\phi\p_{i}\phi}{\phi}$. Then,
\bnn \ba&\int \frac{\phi \n_{\de}  u_{\de}\otimes u_{\de} : (\p_{j}\p_{i}\phi)_{3\times3}}{\left(\phi+|x-x^{*}|^{\frac{2}{2-\a_{0}}}\right)^{ \a_{0}}} =\int \frac{  \n_{\de}  |u_{\de}|^{2}}{\left(\phi+|x-x^{*}|^{\frac{2}{2-\a_{0}}}\right)^{ \a_{0}}}-\int \frac{  \n_{\de}  |u_{\de}\cdot\na \phi|^{2}}{\left(\phi+|x-x^{*}|^{\frac{2}{2-\a_{0}}}\right)^{ \a_{0}}}, \ea\enn
and hence, by  \eqref{r15} and  \eqref{z009},  we have
\be\la{x9}\ba &\int \n_{\de} u_{\de}\otimes u_{\de} :\na  \xi\\
&=\int \frac{  \n_{\de}  |u_{\de}|^{2}}{\left(\phi+|x-x^{*}|^{\frac{2}{2-\a_{0}}}\right)^{ \a_{0}}} - \a_{0} \int \frac{\phi \n_{\de}(u_{\de}\cdot \na\phi)^{2}}{\left(\phi+|x-x^{*}|^{\frac{2}{2-\a_{0}}}\right)^{\a_{0}+1}}\\
&\quad - \a_{0}\int \frac{\phi\n_{\de} (u_{\de}\cdot \na |x-x^{*}|^{\frac{2}{2-\a_{0}}})(u_{\de}\cdot\na \phi)}{\left(\phi+|x-x^{*}|^{\frac{2}{2-\a_{0}}}\right)^{\a_{0}+1}}\\
&\ge (1- \a_{0})\int \frac{  \n_{\de}  |u_{\de}|^{2}}{\left(\phi+|x-x^{*}|^{\frac{2}{2-\a_{0}}}\right)^{ \a_{0}}} - \a_{0}\int \frac{\phi\n_{\de} (u_{\de}\cdot \na |x-x^{*}|^{\frac{2}{2-\a_{0}}})(u_{\de}\cdot\na \phi)}{\left(\phi+|x-x^{*}|^{\frac{2}{2-\a_{0}}}\right)^{\a_{0}+1}}\\
&\ge \frac{(1- \a_{0})}{2}\int \frac{  \n_{\de}  |u_{\de}|^{2}}{\left(\phi+|x-x^{*}|^{\frac{2}{2-\a_{0}}}\right)^{ \a_{0}}} -C(\a_{0})\int \frac{\phi^{2}\n_{\de}|u_{\de}|^{2}  |x-x^{*}|^{\frac{2\a_{0}}{2-\a_{0}}}}{\left(\phi+|x-x^{*}|^{\frac{2}{2-\a_{0}}}\right)^{\a_{0}+2}}\\ &\ge C \int_{\om\cap B_{k_{2}}(x^{*})} \frac{  \n_{\de}  |u_{\de}|^{2}}{ |x-x^{*}|^{\a_{0}}} -C\|\n_{\de}|u_{\de}|^{2}\|_{L^{1}}.\ea\ee
%where the  inequalities are due to     \eqref{z009}  and the Cauchy inequality.
%
Therefore, \eqref{r11} together with \eqref{r13} and \eqref{r13w}-\eqref{x9} yield
%inequalities \eqref{r13} and \eqref{r13w}-\eqref{x9} ensure that \eqref{r11} satisfies
\be\la{x10}\ba &
\int_{\om\cap B_{k_{2}}(x^{*})}\left(\frac{\left(\de \n_{\de}^{4} +\n_{\de}^{2} \frac{\p f}{\p \n_{\de}}\right)}{|x-x^{*}|^{\a_{0}}}+\frac{  \n_{\de}|u_{\de}|^{2}}{|x-x^{*}|^{\a_{0}}}\right) \\
& \le C \int \left(\de \n_{\de}^{4} +\n_{\de}^{2} \frac{\p f}{\p \n_{\de}}\right) +C\left(\|u_{\de}\|_{H_{0}^{1}}+\|\n_{\de}^{\bar{b}}\mu_{\de}^{2}\|_{L^{1}}+
\|\n_{\de}|u_{\de}|^{2}\|_{L^{1}}+1\right)\\
& \le C(\g,\th,\overline{H})\left( \int \left( \de \n_{\de}^{4} +\n_{\de}^{2} \frac{\p f}{\p \n_{\de}} \right)\n_{\de}^{\th} \right)^{\frac{\g}{\g+\th}}\\&\quad+C\left(\|u_{\de}\|_{H_{0}^{1}}+\|\n_{\de}^{\bar{b}}\mu_{\de}^{2}\|_{L^{1}}+\|\n_{\de}|u_{\de}|^{2}\|_{L^{1}}+1\right)\\
 & \le C +C\|\n_{\de}^{\bar{b}}(|u_{\de}|^{2}+\mu_{\de}^{2})\|_{L^{1}},\ea\ee
 where, for  the last two inequalities  we have used  the  H\"older inequality,   \eqref{r3},  \eqref{r7} as well as
 \bnn \|\n_{\de}|u_{\de}|^{2}\|_{L^{1}}\le C+C\|\n_{\de}^{\bar{b}}|u_{\de}|^{2}\|_{L^{1}}^{\frac{2}{3\bar{b}-1}} \le C+C\|\n_{\de}^{\bar{b}}|u_{\de}|^{2}\|_{L^{1}},\enn  which comes from  \eqref{x2}.

 { {\it Case 2:  interior  point $x^{*}\in \om$}.}
There is a constant $r>0$ such that $dist (x^{*},\,\p\om)=3r$.  Let $\chi$ be a smooth cut-off function  satisfying $ \chi= 1$  in  $B_{r}(x^{*})$ and  $\chi=0$
 outside   $B_{2r}(x^{*})$,  as well as $|\na \chi|\le 2r^{-1}.$ Choosing  $\Phi(x)=\frac{x-x^{*}}{|x-x^{*}|^{\a_{0}}}\chi^{2}$  in $\eqref{e9}$,    we find
  \be\la{r8}\ba&\int \left(\de \n_{\de}^{4} +\n_{\de}^{2} \frac{\p f}{\p\n_{\de}}\right)  \frac{3-\a_{0}}{|x-x^{*}|^{\a_{0}}}\chi^{2} + \int \n_{\de} u_{\de}\otimes u_{\de} :\na  \left(\frac{x-x^{*}}{|x-x^{*}|^{\a_{0}}}\chi^{2}\right)\\
&= -\int \left(\n_{\de}g_{1}+g_{2}\right)\cdot \frac{x-x^{*}}{|x-x^{*}|^{\a_{0}}}\chi^{2}+\int \mathbb{S}_{ns} :\na  \left(\frac{x-x^{*}}{|x-x^{*}|^{\a_{0}}}\chi^{2}\right)\\
&\quad  -2\int \left(\de \n_{\de}^{4} +\n_{\de}^{2} \frac{\p f}{\p \n_{\de}}\right)  \chi \frac{\na\chi\cdot(x-x^{*})}{|x-x^{*}|^{\a_{0}}}+\int  \mathbb{S}_{c} :\na  \left(\frac{x-x^{*}}{|x-x^{*}|^{\a_{0}}}\chi^{2}\right).\ea\ee
By a direct computation, one has
\bnn\ba&\p_{i} \left(\frac{x^{j}-(x^{*})^{j}}{|x-x^{*}|^{\a_{0}}}\chi^{2}\right)\\
&= \frac{\p_{i}(x^{j}-(x^{*})^{j})}{|x-x^{*}|^{\a_{0}}}\chi^{2} -\a_{0} \frac{(x^{j}-(x^{*})^{j})(x^{i}-(x^{*})^{i})}{|x-x^{*}|^{\a_{0}+2}}\chi^{2}+2\chi \frac{x^{j}-(x^{*})^{j}}{|x-x^{*}|^{\a_{0}}}\p_{i} \chi\\
&\in L^{q},\quad q\in [2,\frac{3}{\a_{0}}),\ea\enn
and hence, the second term on the left-hand side of \eqref{r8} satisfies
 \be\la{x3} \ba& \int \n_{\de} u_{\de}\otimes u_{\de} :\na \left(\frac{x-x^{*}}{|x-x^{*}|^{\a_{0}}}\chi^{2}\right)\\
 &\ge(1-\a_{0})\int \frac{\n_{\de} |u_{\de}|^{2}}{|x-x^{*}|^{\a_{0}}}\chi^{2}+2\int  \frac{ \chi\n_{\de} (u_{\de}\cdot\na \chi )(u_{\de}\cdot(x-x^{*}))}{|x-x^{*}|^{\a_{0}}}\\
 &\ge\frac{1-\a_{0}}{2}\int \frac{\n_{\de} |u_{\de}|^{2}}{|x-x^{*}|^{\a_{0}}}\chi^{2}-C \int_{r<|x-x^{*}|< 2r} \frac{\n_{\de}|u_{\de}|^{2}}{|x-x^{*}|^{\a_{0}}},\ea\ee where $C$ is independent of $r.$

 For the terms on the right-hand side   of \eqref{r8}, we have
 \bnn \ba &\left| -\int \left(\n_{\de}g_{1}+g_{2}\right)\cdot \frac{x-x}{|x-x^{*}|^{\a_{0}}}\chi^{2}+\int  \mathbb{S}_{ns} :\na  \left(\frac{x-x^{*}}{|x-x^{*}|^{\a_{0}}}\chi^{2}\right)\right|\\
 &\le C+C\|u_{\de}\|_{H_{0}^{1}} \ea\enn
 and
\bnn\ba&  -2\int \left(\de \n_{\de}^{4} +\n_{\de}^{2} \frac{\p f}{\p \n_{\de}}\right)  \chi \frac{\na\chi\cdot(x-x^{*})}{|x-x^{*}|^{\a_{0}}}\le  C\int_{r<|x-x^{*}|<2r} \frac{\left(\de \n_{\de}^{4} +\n_{\de}^{2}\frac{\p f}{\p\n_{\de}}\right)}{|x-x^{*}|^{\a_{0}}},\ea \enn
where $C$ is independent of $r.$
Similarly  to  \eqref{r13w}, we deduce
 \bnn \ba\int  \mathbb{S}_{c} :\na  \left(\frac{x-x^{*}}{|x-x^{*}|^{\a_{0}}}\chi^{2}\right)
 &\le C  +C\|\n_{\de}^{\bar{b}}\mu_{\de}^{2} \|_{L^{1}} +C\int\n_{\de}^{2} \frac{\p f}{\p \n_{\de}}.\ea\enn
From  the  above estimates,   we obtain   \be\la{r9*}\ba &\int_{B_{r}(x^{*})}\left(\frac{\left(\de \n_{\de}^{4} +\n_{\de}^{2} \frac{\p f}{\p \n_{\de}}\right)}{ |x-x^{*}|^{\a_{0}}}+ \frac{\n_{\de}|u_{\de}|^{2}}{|x-x^{*}|^{\a_{0}}}\right) \\
& \le C \int \n_{\de}^{2} \frac{\p f}{\p \n_{\de}} +C\left(\|u_{\de}\|_{H_{0}^{1}}+\|\n_{\de}^{\bar{b}}\mu_{\de}^{2}\|_{L^{1}}+\|\n_{\de}|u_{\de}|^{2}\|_{L^{1}}+1\right)\\
&\quad+  C \int_{r<|x-x^{*}|< 2r} \left(\frac{\left(\de \n_{\de}^{4} +\n_{\de}^{2} \frac{\p f}{\p \n_{\de}}\right)}{ |x-x^{*}|^{\a_{0}}}+ \frac{\n_{\de}|u_{\de}|^{2}}{|x-x^{*}|^{\a_{0}}}\right) \\
 & \le  C +C\|\n_{\de}^{\bar{b}}(|u_{\de}|^{2}+\mu_{\de}^{2})\|_{L^{1}} \\&\quad+ C \int_{r<|x-x^{*}|< 2r} \left(\frac{\left(\de \n_{\de}^{4} +\n_{\de}^{2} \frac{\p f}{\p \n_{\de}}\right)}{ |x-x^{*}|^{\a_{0}}}+ \frac{\n_{\de}|u_{\de}|^{2}}{|x-x^{*}|^{\a_{0}}}\right),\ea\ee
 where  the last inequality follows from \eqref{x10}.

We need to discuss two situations: $(i)$  $x^{*}\in \om$ is far from the boundary.  $(ii)$  $x^{*}\in \om$ is close to the boundary.

$(i)$  The case of $dist(x^{*},\,\p\om)= 3r\ge \frac{k_{2}}{2}>0$, where $k_{2}$ is the same as in \eqref{x7}.
From  \eqref{r9*}, one has
 \be\la{r9a}\ba &\int_{B_{r}(x^{*})}\frac{\left(\de \n_{\de}^{4} +\n_{\de}^{2} \frac{\p f}{\p \n_{\de}}\right)}{ |x-x^{*}|^{\a_{0}}} \\
 & \le  C +C\|\n_{\de}^{\bar{b}}(|u_{\de}|^{2}+\mu_{\de}^{2})\|_{L^{1}}+C(k_{2}) \int \left(\de \n_{\de}^{4} +\n_{\de}^{2} \frac{\p f}{\p \n_{\de}}+\n_{\de}|u_{\de}|^{2}\right)\\
 & \le C \left(1+\|\n_{\de}^{\bar{b}}(|u_{\de}|^{2}+\mu_{\de}^{2})\|_{L^{1}}\right),\ea\ee
 where  in  the last inequality we have also used \eqref{x10}.

$(ii)$  The case of  $dist(x^{*},\,\p\om)= 3r<\frac{k_{2}}{2}$. Let $ |x^{*}-\tilde{x}^{*}|=dist(x^{*},\,\p\om).$  Then,
       \be\la{bb1} 4 |x-x^{*}|\ge |x-\tilde{x}^{*}|,\quad \forall \,\,\, x\notin B_{r}(x^{*}).\ee
   In view  of  \eqref{bb1}, we infer from \eqref{r9*} that \be\la{r9b}\ba &\int_{B_{r}(x^{*})}\left(\frac{\left(\de \n_{\de}^{4} +\n_{\de}^{2} \frac{\p f}{\p \n_{\de}}\right)}{ |x-x^{*}|^{\a_{0}}}+ \frac{\n_{\de}|u_{\de}|^{2}}{|x-x^{*}|^{\a_{0}}}\right) \\
 & \le  C+C\|\n_{\de}^{\bar{b}}(|u_{\de}|^{2}+\mu_{\de}^{2})\|_{L^{1}}+C \int_{r<|x-x^{*}|< 2r} \left(\frac{\left(\de \n_{\de}^{4} +\n_{\de}^{2} \frac{\p f}{\p \n_{\de}}\right)}{ |x-x^{*}|^{\a_{0}}}+ \frac{\n_{\de}|u_{\de}|^{2}}{|x-x^{*}|^{\a_{0}}}\right)\\
 & \le C +C\|\n_{\de}^{\bar{b}}(|u_{\de}|^{2}+\mu_{\de}^{2})\|_{L^{1}} +  C \int_{\om\cap B_{k_{2}}(\tilde{x}^{*})} \left(\frac{\left(\de \n_{\de}^{4} +\n_{\de}^{2} \frac{\p f}{\p \n_{\de}}\right)}{ |x-\tilde{x}^{*}|^{\a_{0}}}+ \frac{\n_{\de}|u_{\de}|^{2}}{|x-\tilde{x}^{*}|^{\a_{0}}}\right)\\
&\le C \left(1+\|\n_{\de}^{\bar{b}}(|u_{\de}|^{2}+\mu_{\de}^{2})\|_{L^{1}}\right),\ea\ee
where for the last inequality we have also used \eqref{x10}.

 In summary, we obtain \eqref{r5}  from \eqref{x10},   \eqref{r9a} and \eqref{r9b}.
 \end{proof}
\begin{remark}The case that  $x^*\in\om$ is close to the boundary was first treated by  Mucha-Pokorn\'{y}-Zatorska \cite{mpm}, where they   combined the test functions for both the interior and boundary cases.  \end{remark}

 {\it Step 3.}
By  \eqref{r5}  and   the H\"older inequality,  we have
  \be\la{z2}\ba
\int \frac{\n_{\de}^{\bar{b}}}{|x-x^{*}|}
&\le \left(\int \frac{\n_{\de}^{\g}}{|x-x^{*}|^{\a_{0}}}\right)^{\frac{\bar{b}}{\g}}\left(\int \frac{1}{|x-x^{*}|^{\frac{\g-\bar{b}\a_{0}}{\g-\bar{b}}}}\right)^{\frac{\g-\bar{b}}{\g}}\\
&\le C \left(\int \frac{\n_{\de}^{\g}}{|x-x^{*}|^{\a_{0}}} \right)^{\frac{\bar{b}}{\g}}\\
&\le C \left(1+ \|\n_{\de}^{\bar{b}}(|u_{\de}|^{2}+\mu_{\de}^{2})\|_{L^{1}}\right)^{\frac{\bar{b}}{\g}},\ea\ee
 if    \be\la{z8} \frac{\bar{b}(3-\a_{0})}{2}<\g.\ee
 We note that \eqref{z8} implies $\frac{\g-\bar{b}\a_{0}}{\g-\bar{b}}<3.$

Consider the Neumann  boundary value problem:
\be\la{z1}  \left\{\ba &\lap h(x^{*})=\n_{\de}^{\bar{b}}-\frac{1}{|\om|}\int_{\om}\n_{\de}^{\bar{b}}=\n_{\de}^{\bar{b}}-\mathfrak{m}\quad{\rm in}\,\,\, \om, \\
&\frac{\p h(x^{*})}{\p n}=0\,\,\,  {\rm on}\,\,\, \p\om,\ea\right.  \ee
 where $\mathfrak{m}=\frac{1}{|\om|}\int_{\om}\n_{\de}^{\bar{b}}.$\,\,
Recalling the  Green's function representation   $$h(x^{*})=\int_{\om}G(x^{*},x)\left(\n_{\de}^{\bar{b}}(x)-\mathfrak{m}\right)dx,$$
and using \eqref{z2}, we have
 \be\la{z3}\ba \|h\|_{L^{\infty}}&\le \sup_{x^{*}\in \om}\int_{\om}\frac{\left(\n_{\de}^{\bar{b}}(x)-\mathfrak{m}\right)}{|x-x^{*}|}dx\\
 &\le \sup_{x^{*}\in \om}\int_{\om}\frac{\n_{\de}^{\bar{b}}(x)}{|x-x^{*}|}dx+C \mathfrak{m}\\
 &\le C \left(1+  \|\n_{\de}^{\bar{b}}(\mu_{\de}^{2}+|u_{\de}|^{2})\|_{L^{1}}^{\frac{\bar{b}}{\g}}\right).\ea\ee
From  \eqref{z1}  one has
\bnn\ba \|\n_{\de}^{\bar{b}}\mu_{\de}^{2}\|_{L^{1}} &=  \int \mu_{\de}^{2}(\mathfrak{m}+\lap h)\\
&=\mathfrak{m}\int \mu_{\de}^{2}-2\int \mu_{\de}\na \mu_{\de}\cdot\na  h\\
&\le \mathfrak{m}\int \mu_{\de}^{2}+2\|\na \mu_{\de}\|_{L^{2}}\left(\int \mu_{\de}^{2}|\na h|^{2}\right)^{\frac{1}{2}},\ea\enn
 and
 \bnn\ba \int \mu_{\de}^{2}|\na h|^{2}&=-\int \left(\mu_{\de}^{2}h\lap h+2\mu_{\de}\na \mu_{\de}h\na h\right)\\
 &\le \|h\|_{L^{\infty}}\left(\mathfrak{m}\int \mu_{\de}^{2}+ \int \n_{\de}^{\bar{b}}\mu_{\de}^{2}+2\|\na \mu_{\de}\|_{L^{2}}\left(\int \mu_{\de}^{2}|\na h|^{2}\right)^{\frac{1}{2}}\right),\ea\enn
thus,
\be\la{ref2}\ba \|\n_{\de}^{\bar{b}}\mu_{\de}^{2}\|_{L^{1}}  \le C \left(\|h\|_{L^{\infty}}\|\na \mu_{\de}\|_{L^{2}}^{2}+\mathfrak{m}\|\mu_{\de}\|_{L^{2}}^{2}\right) .\ea\ee
Thanks to the  interpolation inequality and $\|\n_{\de}\|_{L^{1}}=m_{1}$, we have
\bnn \mathfrak{m}\le C\|\n_{\de}^{\bar{b}}\|_{L^{1}}\le C\|\n_{\de}^{\g}\|_{L^{1}}^{\frac{\bar{b}-1}{\g-1}} \quad{\rm and}\quad \|\n_{\de}\|_{L^{\frac{6}{5}}}^{2}\le C\|\n_{\de}^{\g}\|_{L^{1}}^{\frac{1}{3(\g-1)}}.\enn
Then, by \eqref{3.30} one has
  \bnn  \mathfrak{m}\|\mu_{\de}\|_{L^{2}}^{2} \le C \|\n_{\de}^{\g}\|_{L^{1}}^{\frac{3\bar{b}-2}{3(\g-1)}} (1+\|\na\mu_{\de}\|_{L^{2}}^{2}).\enn
Substituting  it back into \eqref{ref2} yields
  \bnn \ba  \|\n_{\de}^{\bar{b}}\mu_{\de}^{2}\|_{L^{1}}  \le C \left(  \|h\|_{L^{\infty}}+ \|\n_{\de}^{\g}\|_{L^{1}}^{\frac{3\bar{b}-2}{3(\g-1)}}\right)(1+\|\na\mu_{\de}\|_{L^{2}}^{2}).\ea \enn
Similarly, we have
 \bnn  \|\n_{\de}^{\bar{b}}|u_{\de}|^{2}\|_{L^{1}}\le C\|h\|_{L^{\infty}}\|\na u_{\de}\|_{L^{2}}^{2}.\enn
Thus, from    \eqref{z3}, \eqref{r5} and \eqref{r3},   we obtain
 \be\la{z6}\ba \|\n_{\de}^{\bar{b}}(|u_{\de}|^{2}+\mu_{\de}^{2})\|_{L^{1}} &\le C \left(  \|h\|_{L^{\infty}}+ \|\n_{\de}^{\g}\|_{L^{1}}^{\frac{3\bar{b}-2}{3(\g-1)}}\right)\left(1+\|\na \mu_{\de}\|_{L^{2}}+\|\na u_{\de}\|_{L^{2}}\right)^{2} \\
 &\le C +C \|\n_{\de}^{\bar{b}}(|u_{\de}|^{2}+\mu_{\de}^{2})\|_{L^{1}}^{\beta},\ea\ee with
 \bnn  \beta=\max\left\{\frac{\bar{b}}{\g}  ,\,\,\,\frac{3\bar{b}-2}{3(\g-1)}\right\}+\frac{1}{3\bar{b}-1} .\enn

 {\it Step 4.}
If  we can prove
  \be\la{z7}\ba \|\n_{\de}^{\bar{b}}(|u_{\de}|^{2}+\mu_{\de}^{2})\|_{L^{1}} \le C,\ea\ee
then,  we conclude   \eqref{ss1} from \eqref{r3}, \eqref{q5.9ab},  and \eqref{r7},  and thus complete the proof of
   Lemma \ref{lem5.2}.

To prove \eqref{z7},   it suffices to show  $\beta<1$ in view of  \eqref{z6}.  By  \eqref{bb7},    we may take $\th$    close  to zero as  $\a_{0}\rightarrow 0.$  From   \eqref{z8} and \eqref{bb5} we see that \be\la{ref4}
 \frac{\bar{b}}{\g}<\frac{2}{3-\a_{0}} \rightarrow \frac{2}{3}\quad ({\rm as}\,\,\a_{0}\rightarrow0)\quad {\rm and}\quad  \frac{1}{3\bar{b}-1} =\frac{\g+\th}{3\g+6\th}\rightarrow \frac{1}{3}\,\,\, ({\rm as}\,\,\th\rightarrow0).\ee
 Hence,   $\beta= \frac{\bar{b}}{\g} +\frac{1}{3\bar{b}-1}<1$ if  both $\a_{0}$ and $\th$ are chosen small enough. Besides,  to guarantee \eqref{z8}, from \eqref{bb5} we have   \bnn \g> \frac{3-\a_{0}}{2}\bar{b}=\frac{3-\a_{0}}{2}\cdot \frac{4\g+7\th}{3(\g+\th)}\rightarrow 2 \quad ({\rm as}\,\,\a_{0},\,\th\rightarrow0).\enn
 If $\beta=\frac{3\bar{b}-2}{3(\g-1)} +\frac{1}{3\bar{b}-1}$ (we have no need  checking \eqref{z8} any more), we see that $\frac{3\bar{b}-2}{3(\g-1)}<\frac{2}{3}$ is equivalent
  to $\g>\frac{3}{2} \bar{b}$.   By \eqref{bb5},
 \be\la{ref7} \g>\frac{3}{2} \bar{b}=\frac{3}{2}\cdot \frac{4\g+7\th}{3(\g+\th)}\rightarrow 2\quad ({\rm as}\,\,\th\rightarrow0).\ee
 This and \eqref{ref4} guarantee that $\beta<1$ as long as $\th$ is small.
\medskip

{\underline {\it The case of  $\na \times g_{1}= 0$.}}  In this case,  from \eqref{r3} we have
\bnn \|u_{\de}\|_{H_{0}^{1}}+\|\na \mu_{\de}\|_{L^{2}} \le C. \enn
Then,  the same deduction as \eqref{z6} yields
 \be\la{z6t}\ba \|\n_{\de}^{\bar{b}}(|u_{\de}|^{2}+\mu_{\de}^{2})\|_{L^{1}}
 &\le C \left( \|h\|_{L^{\infty}}+ \|\n_{\de}^{\g}\|_{L^{1}}^{\frac{3\bar{b}-2}{3(\g-1)}}\right) \\
 &\le C +C \|\n_{\de}^{\bar{b}}(|u_{\de}|^{2}+\mu_{\de}^{2})\|_{L^{1}}^{\beta},\ea\ee with
 \bnn \beta=\max\left\{\frac{\bar{b}}{\g}  ,\,\,\,\frac{3\bar{b}-2}{3(\g-1)}\right\} .\enn

By    \eqref{z8}, we see that   $\beta=\frac{\bar{b}}{\g}<\frac{2}{3-\a_{0}}<1$ is always valid for all $\a_{0}\in (0,1).$ In order for \eqref{z8} and \eqref{bb7} to be satisfied,  from \eqref{bb5} we have
\bnn  \g>\frac{3-\a_{0}}{2}\bar{b}>\bar{b}=\frac{4\g+7\th}{3(\g+\th)}>\frac{4}{3}+\frac{\a_{0}}{3}\rightarrow\frac{5}{3}\quad({\rm as}\,\,\a_{0}\rightarrow1).\enn
If   $\beta=\frac{3\bar{b}-2}{3(\g-1)}$ (we have no need  checking \eqref{z8} any more), to guarantee $\beta<1$, it suffices  to require \bnn \g>\frac{1}{3}+\bar{b}=\frac{1}{3}+\frac{4\g+7\th}{3(\g+\th)}> \frac{5}{3}+\frac{\a_{0}}{3}\rightarrow\frac{5}{3}\quad({\rm as}\,\,\a_{0}\rightarrow0).\enn
 The proof of Lemma \ref{lem5.2}  is completed.  \hfill $\Box$\\

By  Lemma \ref{lem5.2},  we can take the following limits, subject to some subsequence,
\be\la{6b10} (\na u_{\de},\,\na \mu_{\de})\rightharpoonup  (\na u,\,\na \mu)\,\,{\rm in}\,\, L^{2},  \ee
\be\la{6b11a} (u_{\de},\, \mu_{\de}) \rightarrow    (u,\,\mu)\,\,\,\,{\rm in}\quad L^{p_{1}}\,\,\,(1\le p_{1}<6),\ee
\be\la{6b11b}  c_{\de} \rightarrow  c\,\,{\rm in}\,\,\,\, W^{1,p_{2}}\,\,\,\,({\rm for\,some}\,p_{2}>2),\ee
\be\la{6b15}   \de \n_{\de}^{4+\th} \rightarrow 0\,\,\,\, {\rm in}\,\,\,\, L^{1}, \quad {\rm and}\quad \n_{\de}  \rightharpoonup \n \,\,{\rm in}\,\, L^{\g+\th},\ee
where   \eqref{6b15}  is due to
$ \n_{\de}^{\g+\th}\le (\g-1)\n_{\de}^{2+\th} \frac{\p f}{\p \n_{\de}}$.
As a result of  \eqref{6b11a}-\eqref{6b15},
\be\la{6b14}  (\n_{\de}  u_{\de},\,\n_{\de}  \mu_{\de})\rightharpoonup (\n u,\,\n\mu)\,\,\,  {\rm in}\,\,\,  L^{p_{3}}\,\,\, ({\rm for\,some}\, p_{3}>6/5),\ee
\be\la{6b14a}  (\n_{\de}  u_{\de}\otimes u_{\de},\,\n_{\de} u_{\de}c_{\de})\rightharpoonup (\n u\otimes u,\,\n u c)\,\,\, {\rm in}\quad  L^{p}\,\,({\rm for\,some}\,p>1);\ee
and furthermore,
\be\la{6b12a}\ba   \n_{\de}^{2} \frac{\p f}{\p \n_{\de}}&=(\g-1)\n_{\de}^{\g}+\n_{\de}H_{1}(c_{\de})\rightharpoonup (\g-1)\overline{\n^{\g}}+\n H_{1}(c):=\overline{\n^{2} \frac{\p f}{\p \n}}\quad {\rm in}\quad L^{\frac{\g+\th}{\g}},\ea\ee
\be\la{6b18} \ba  \n_{\de}  \frac{\p f}{\p c_{\de}}
&=\n_{\de}\ln\n_{\de}H_{1}'(c_{\de})+\n_{\de}H_{2}'(c_{\de}) \rightharpoonup \overline{\n\ln \n}H_{1}'(c)+\n H_{2}'(c):=\overline{ \n  \frac{\p f}{\p c}}\,\, {\rm in}\,\,\,\, L^{p_{3}}.\ea\ee
With  \eqref{6b10}-\eqref{6b18} in hand,  we are able to take $\de$-limit  in \eqref{n7}  and obtain the  following equations in the distribution sense:
\begin{equation}\label{n7b}
\left\{\ba
&{\rm div}(\n u) =0,\\
&{\rm div} (\n u\otimes u)+\na   \left(\overline{\n^{2}  \frac{\p f}{\p \n}} \right)
 =\div \left(\mathbb{S}_{ns}+\mathbb{S}_{c} \right)+\n g_{1}+g_{2},\\
&\div (\n u c)=\lap \mu,\\
&\n\mu=\overline{\n  \frac{\p f}{\p c}}-\lap c.\ea \right.
\end{equation}
In order to complete the proof of   Theorem \ref{t},
it remains  to  verify
\bnn  \overline{\n^{2} \frac{\p f}{\p \n}}=\n^{2} \frac{\p f}{\p \n}\quad {\rm and}\quad \overline{\n  \frac{\p f}{\p c}}=\n  \frac{\p f}{\p c}.\enn
To  this end    it suffices to prove  $\n_{\de}\rightarrow \n \,\,\, {\rm in}\,\,\,L^{1},$ which is our   task in the rest of the paper.

Let   $T_{k}(z)$ be  an increasing and concave  function, in particular,  \be\la{7.0}
 C^{1}([0,\infty))\ni T_{k}(z)=\left\{\ba &z,\quad z\le k\in \mathbb{N},\\
 &k+1,\quad z\ge k+1.\ea \right.\ee
Clearly,
 \be\la{9.1} T_{k}(\n_{\de})\rightharpoonup \overline{T_{k}(\n)}\quad {\rm in}\quad L^{p}(\om),\,\,\,\forall\,\,\,p\in [1,\infty].\ee

 %The next lemma   corresponds  to Lemma \ref{lem4.3} in previous Section.
 \begin{lemma}\la{lem5.4} Let  $(\n_{\de},u_{\de},\mu_{\de},c_{\de})$ be a solution   obtained   in Theorem \ref{t5.1}.  Then, for the effective viscous flux the following holds,
\be\la{7.1} \ba
& \lim_{\de\rightarrow0}\int T_{k}(\n_{\de})\left( \n_{\de}^{2} \frac{\p f}{\p \n_{\de}}-(2\lambda_{1}+\lambda_{2})  \div u_{\de}\right) =
\int \overline{T_{k}(\n)}\left( \overline{\n^{2} \frac{\p f}{\p \n}}-(2\lambda_{1}+\lambda_{2})\div u \right),\ea\ee
where $T_{k}$ is defined in \eqref{7.0}.
 \end{lemma}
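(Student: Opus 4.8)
The plan is to run the Lions--Feireisl effective-viscous-flux argument, adapted to the extra capillary term. Write $\mathcal{A}_i=\p_i\Delta^{-1}$ and $\mathcal{R}_{ij}=\p_i\p_j\Delta^{-1}$ for the Fourier multiplier operators on $\R$ (bounded on $L^q$, $1<q<\infty$, cf. \eqref{e0}), with the elementary identities $\sum_i\p_i\mathcal{A}_i=\mathrm{Id}$ and $\sum_j\mathcal{R}_{ij}=\mathcal{A}_i\circ\div$. Fix $\psi\in C_0^\infty(\Omega)$. I would test the weak momentum identity \eqref{e9} for the $\de$-approximation (extended by density to test functions in $W_0^{1,q}$) with $\Phi_\de=\psi\,\mathcal{A}\big[\mathbf{1}_\Omega T_k(\n_\de)\big]$, which is admissible because $T_k(\n_\de)\in L^\infty$ forces $\mathcal{A}[\mathbf{1}_\Omega T_k(\n_\de)]\in W^{1,q}(\Omega)$ for every $q<\infty$; and I would test the limiting momentum identity coming from \eqref{n7b}$_2$ with the corresponding $\Phi=\psi\,\mathcal{A}\big[\mathbf{1}_\Omega\overline{T_k(\n)}\big]$.

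After expanding both identities via $\div(\psi\mathcal{A}_i[g])=\p_i\psi\,\mathcal{A}_i[g]+\psi g$ and $\p_j(\psi\mathcal{A}_i[g])=\p_j\psi\,\mathcal{A}_i[g]+\psi\mathcal{R}_{ij}[g]$, the proof reduces to a term-by-term passage to the limit $\de\to0$ followed by subtraction. Three kinds of terms are of ``strong $\times$ weak'' type and, appearing identically in the limiting identity, cancel: (i) every term carrying a derivative of $\psi$, together with the forcing $\n_\de g_1+g_2$ and the vanishing pressure $\de\n_\de^4$ --- here one uses that $\mathcal{A}_i[\mathbf{1}_\Omega T_k(\n_\de)]\to\mathcal{A}_i[\mathbf{1}_\Omega\overline{T_k(\n)}]$ strongly in $C(\overline\Omega)$ (by \eqref{9.1}, \eqref{e2}), while $\na u_\de\rightharpoonup\na u$ in $L^2$, $\n_\de\rightharpoonup\n$ in $L^{\g+\th}$, and $\de\n_\de^4\to0$ in $L^{(4+\th)/4}$ by \eqref{6b15}; (ii) the capillary stress $\mathbb{S}_c(\n_\de)=-\na c_\de\otimes\na c_\de+\frac12|\na c_\de|^2\mathbb{I}\to\mathbb{S}_c(\n)$ strongly in $L^{p_2/2}$ (with $p_2/2>1$, from \eqref{6b11b}), paired against $\mathcal{R}_{ij}[\mathbf{1}_\Omega T_k(\n_\de)]\rightharpoonup\mathcal{R}_{ij}[\mathbf{1}_\Omega\overline{T_k(\n)}]$ in $L^{(p_2/2)'}$ by \eqref{e1}; (iii) the ``$\p_j\psi$'' part of the convective and viscous terms, by the same strong$\times$weak pairing with $\n_\de u_\de\otimes u_\de\rightharpoonup\n u\otimes u$ in $L^p$, $p>1$.

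The convective ``$\psi\mathcal{R}$'' term is handled by the div--curl lemma. Writing $\n_\de u_\de^iu_\de^j\mathcal{R}_{ij}[\mathbf{1}_\Omega T_k(\n_\de)]=u_\de^i\big(\n_\de u_\de^j\mathcal{R}_{ij}[\mathbf{1}_\Omega T_k(\n_\de)]-T_k(\n_\de)\mathcal{R}_{ij}[\mathbf{1}_\Omega\n_\de u_\de^j]\big)+u_\de^iT_k(\n_\de)\,\mathcal{A}_i[\div(\mathbf{1}_\Omega\n_\de u_\de)]$, the last term vanishes since $\div(\n_\de u_\de)=0$ in $\R$ (Lemma \ref{lem4.2}, using $\n_\de\in L^{\g+\th}$, $\g+\th>2$, $u_\de\in H_0^1$), and the commutator converges weakly in some $L^r$, $r>1$, by Lemma \ref{lem4.4} applied to $v_\de=\n_\de u_\de^j$ (bounded in $L^{p_3}$, $p_3>6/5$, indeed $>3/2$ since $\g+\th>2$, by \eqref{6b14}) and $w_\de=T_k(\n_\de)$ (bounded in every $L^q$); multiplying by $\psi u_\de^i\to\psi u^i$ in $L^{p_1}$ ($p_1<6$) and again invoking $\mathcal{A}_i[\div(\mathbf{1}_\Omega\n u)]=0$ in the limit, this term too matches and cancels. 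What survives after all cancellations is the effective-flux combination itself: the viscous and pressure ``$\psi$'' terms recombine, via $\div\mathbb{S}_{ns}(u)=\lambda_1\Delta u+(\lambda_1+\lambda_2)\na\div u$ and repeated use of $\sum_i\p_i\mathcal{A}_i=\mathrm{Id}$, into the single coefficient $2\lambda_1+\lambda_2$ in front of $\div u$, giving
\[
\lim_{\de\to0}\int\psi\,T_k(\n_\de)\Big(\n_\de^2\tfrac{\p f}{\p\n_\de}-(2\lambda_1+\lambda_2)\div u_\de\Big)=\int\psi\,\overline{T_k(\n)}\Big(\overline{\n^2\tfrac{\p f}{\p\n}}-(2\lambda_1+\lambda_2)\div u\Big).
\]
Finally, taking $\psi=\psi_n\uparrow1$ with $\psi_n\to1$ in $L^{s'}(\Omega)$ and using that $T_k(\n_\de)\big(\n_\de^2\frac{\p f}{\p\n_\de}-(2\lambda_1+\lambda_2)\div u_\de\big)$ is bounded in $L^s(\Omega)$ for some $s>1$ uniformly in $\de$ (Lemma \ref{lem5.2}: $T_k(\n_\de)\le k+1$, $\n_\de^2\frac{\p f}{\p\n_\de}$ bounded in $L^{(\g+\th)/\g}$, $\div u_\de$ bounded in $L^2$), I pass $\psi_n\to1$ uniformly in $\de$ to deduce \eqref{7.1}.

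The step I expect to be the main obstacle is the bookkeeping of the viscous/pressure recombination in the presence of the cutoff $\psi$ --- verifying that all resulting terms other than $(2\lambda_1+\lambda_2)\psi T_k(\n_\de)\div u_\de$ and $\psi T_k(\n_\de)\n_\de^2\frac{\p f}{\p\n_\de}$ either cancel against the limiting identity or converge to zero --- together with making sure the integrability exponents fit for Lemma \ref{lem4.4}; the latter is exactly where the improved bound $\g+\th>2$ from Lemma \ref{lem5.2} enters. The capillary stress $\mathbb{S}_c$, although new relative to the classical single-fluid case, is the benign part here, since $\na c_\de\to\na c$ strongly in $L^{p_2}$ with $p_2>2$.
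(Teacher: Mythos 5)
Your proposal is correct and follows essentially the same Lions--Feireisl strategy the paper uses: test the $\de$-level and limiting momentum equations with $\psi\,\na\lap^{-1}[\mathbf{1}_\om T_k(\n_\de)]$ and $\psi\,\na\lap^{-1}[\mathbf{1}_\om\overline{T_k(\n)}]$, cancel the strong$\times$weak terms, treat the convective term via the commutator structure and the div--curl lemma (Lemma \ref{lem4.4}) together with $\div(\n_\de u_\de)=0$ in $\mathcal{D}'(\r)$, and recombine the viscous and pressure ``$\psi$'' terms into the $(2\lambda_1+\lambda_2)\div u$ effective flux, finally letting $\psi\uparrow1$. The paper writes the same decomposition (its $R_1^\de,\dots,R_7^\de$) and then simply refers the term-by-term convergence to a slight modification of Lemma \ref{lem4.3}, so your version merely fills in the convergence bookkeeping, including the correct observation that the capillary stress is harmless because $\na c_\de\to\na c$ strongly in $L^{p_2}$, $p_2>2$.
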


\begin{proof}  {The argument is similar to  that in Lemma \ref{lem4.3}.}
 Choose  $\Phi=\phi \na\lap^{-1}(T_{k}(\n_{\de}))$ in   $\eqref{e9}$     to  get
\bnn\ba &\int \phi T_{k}(\n_{\de})\left(\n_{\de}^{2} \frac{\p f}{\p \n_{\de}} -(2\lambda_{1}+\lambda_{2})\div u_{\de}\right)\\
&= -\int\de \phi T_{k}(\n_{\de})\n_{\de}^{4}+\p_{i}\lap^{-1}(T_{k}(\n_{\de})) \p_{i}\phi \left(\de\n_{\de}^{4}+\n_{\de}^{2} \frac{\p f}{\p\n_{\de}} -(\lambda_{1}+\lambda_{2})\div u_{\de}\right) \\
&\quad +\lambda_{1} \int \p_{j}u^{i}_{\de}\p_{i}\lap^{-1}(T_{k}(\n_{\de})) \p_{j}\phi- u^{i}_{\de}\p_{j}\p_{i}\lap^{-1}(T_{k}(\n_{\de}))\p_{j}\phi+  T_{k}(\n_{\de}) u_{\de} \cdot\na \phi\\
&\quad-\int(\n_{\de}g_{1}+g_{2})\phi  \p_{i}\lap^{-1}(T_{k}(\n_{\de}))\\
&\quad+\frac{1}{2}\int|\na c_{\de}|^{2}\left(\phi T_{k}(\n_{\de})+\p_{i}\phi\p_{i}\lap^{-1}(T_{k}(\n_{\de}))\right)\\
&\quad -\int \na c_{\de}\otimes \na c_{\de}\left(\phi\p_{j}\p_{i}\lap^{-1}(T_{k}(\n_{\de}))+\p_{j}\phi\p_{i}\lap^{-1}(T_{k}(\n_{\de}))\right)\\
&\quad-\int \n_{\de}u_{\de}^{j}u_{\de}^{i}\p_{j}\phi \p_{i}\lap^{-1}(T_{k}(\n_{\de}))
\\&\quad - \int u_{\de}^{i}\phi\left[\n_{\de}u_{\de}^{j}\phi \p_{j}\p_{i}\lap^{-1}(T_{k}(\n_{\de}))-T_{k}(\n_{\de})\p_{i}\p_{j}\lap^{-1}(\n_{\de}u^{j}_{\de})\right]\\
&= \sum_{i=1}^{7}R_{i}^{\de}.
\ea\enn
On the other hand,  if we  use  $\phi\na \lap^{-1}\left(\overline{T_{k}(\n)}\right)$ as a test function in  $\eqref{n7b}_{2}$, we infer
 \bnn\ba &\int  \phi  \overline{T_{k}(\n)} \left( \overline{\n^{2} \frac{\p f}{\p \n}}-(2\lambda_{1}+\lambda_{2})\div u \right)\\
&= -\int \p_{i}\lap^{-1}\left(\overline{T_{k}(\n)}\right) \p_{i}\phi \left( \overline{\n^{2} \frac{\p f}{\p \n}}-(\lambda_{1}+\lambda_{2})\div u \right) \\
& \quad+\lambda_{1}\int  \p_{j}u^{i} \p_{i}\lap^{-1}\left(\overline{T_{k}(\n)}\right)\p_{j}\phi-  u^{i} \p_{j}\p_{i}\lap^{-1}\left(\overline{T_{k}(\n)}\right)\p_{j}\phi+  \overline{T_{k}(\n)} u  \cdot\na \phi \\
& \quad-\int (\n g_{1}+g_{2})\phi  \p_{i}\lap^{-1}\left(\overline{T_{k}(\n)}\right)\\
&\quad+\frac{1}{2}\int |\na c |^{2}\left(\phi  \overline{T_{k}(\n)} +\p_{i}\phi\p_{i}\lap^{-1}\left(\overline{T_{k}(\n)}\right)\right) \\
& \quad-\int \na c \otimes \na c \left(\phi\p_{j}\p_{i}\lap^{-1}\left(\overline{T_{k}(\n)}\right)+\p_{j}\phi\p_{i}\lap^{-1}\left(\overline{T_{k}(\n)}\right)\right) \\
& \quad-\int \n u^{j}u^{i}\p_{j}\phi \p_{i}\lap^{-1}\left(\overline{T_{k}(\n)}\right)\\&\quad - \int u^{i}\phi\left[\n u^{j}\p_{j}\p_{i}\lap^{-1}\left(\overline{T_{k}(\n)}\right)-\overline{T_{k}(\n)}\p_{i}\p_{j}\lap^{-1}(\n u^{j})\right]\\
&=\sum_{i=1}^{7}R_{i}.
\ea\enn
Therefore,  we obtain  \eqref{7.1}  provided
\be\la{oo}\lim_{\de\rightarrow0}R_{i}^{\de}=R_{i}\quad (i=1, 2, \cdots, 7).\ee
In fact,    \eqref{oo}  can be verified  by  modifying  slightly   the  argument   in Lemma \ref{lem4.3}.  The detail  is omitted  here.
\end{proof}

Finally, let us   prove  the strong convergence of density.
 By  \eqref{b1}  and the simple fact
\bnn  (\n_{\de}^{\g}-\n^{\g})\left(T_{k}(\n_{\de})-T_{k}(\n)\right)\ge \left(T_{k}(\n_{\de})-T_{k}(\n)\right)^{\g+1},\enn  one has
 \bnn\ba &\int  (\n_{\de}^{2} \frac{\p f(\n_{\de},c_{\de})}{\p\n_{\de}}-\n^{2} \frac{\p f}{\p \n})\left(T_{k}(\n_{\de})-T_{k}(\n)\right)\\
 &=\int  (\g-1) (\n_{\de}^{\g}-\n^{\g})\left(T_{k}(\n_{\de})-T_{k}(\n)\right)\\&\quad+ \int  (\n_{\de}H_{1}(c_{\de})-\n H_{1}(c))\left(T_{k}(\n_{\de})-T_{k}(\n)\right)\\
 &\ge \int (\g-1)\left(T_{k}(\n_{\de})-T_{k}(\n)\right)^{\g+1}  \\
 & \quad+\int \n(H_{1}(c_{\de})- H_{1}(c))\left(T_{k}(\n_{\de})-T_{k}(\n)\right)+ (\n_{\de}-\n)H_{1}(c_{\de})\left(T_{k}(\n_{\de})-T_{k}(\n)\right)\\
  &\ge \int (\g-1)\left(T_{k}(\n_{\de})-T_{k}(\n)\right)^{\g+1} +\int \n(H_{1}(c_{\de})- H_{1}(c))\left(T_{k}(\n_{\de})-T_{k}(\n)\right).\ea\enn  Consequently,
   \be\la{7.13}\ba &\lim_{\de\rightarrow0}\int  (\n_{\de}^{2} \frac{\p f(\n_{\de},c_{\de})}{\p\n_{\de}}-\n^{2} \frac{\p f}{\p \n})\left(T_{k}(\n_{\de})-T_{k}(\n)\right)\\&\ge (\g-1)\lim_{\de\rightarrow0}\int \left(T_{k}(\n_{\de})-T_{k}(\n)\right)^{\g+1}.\ea\ee
By virtue of \eqref{7.13} and \eqref{7.1},  %one has
 \be\ba\la{7.12} &(2\lambda_{1}+\lambda_{2}) \lim_{\de\rightarrow0}\int \left(T_{k}(\n_{\de})\div u_{\de}-\overline{T_{k}(\n)}\div u\right)\\
  &=\lim_{\de\rightarrow0}\int \left( T_{k}(\n_{\de})\n_{\de}^{2} \frac{\p f(\n_{\de},c_{\de})}{\p \n_{\de}}-\overline{T_{k}(\n)}\,\,\overline{\n^{2} \frac{\p f}{\p\n}}\right)\\
  &= \lim_{\de\rightarrow 0}\int  (\n_{\de}^{2} \frac{\p f(\n_{\de},c_{\de})}{\p\n_{\de}}-\n^{2} \frac{\p f}{\p\n})\left(T_{k}(\n_{\de})-T_{k}(\n)\right)\\&\quad+ \int (\overline{\n^{2} \frac{\p f}{\p\n}}-\n^{2} \frac{\p f}{\p\n})\left(T_{k}(\n)-\overline{T_{k}(\n)}\right) \\
  &\ge  \lim_{\de\rightarrow 0}\int (\n_{\de}^{2} \frac{\p f(\n_{\de},c_{\de})}{\p\n_{\de}}-\n^{2} \frac{\p f}{\p \n})\left(T_{k}(\n_{\de})-T_{k}(\n)\right),\ea\ee
  where the last inequality   is due to  the concavity of $T_{k}$ and
  \bnn\ba \overline{\n^{2} \frac{\p f}{\p \n}}&=(\g-1)\overline{\n^{\g}}+\overline{\n\ln\n}H_{1}(c) \ge (\g-1) \n^{\g}+ \n\ln\n H_{1}(c)=\n^{2} \frac{\p f}{\p \n}.\ea\enn

 {Following  \cite{jiang}, we define}
 \bnn L_{k}=\left\{\ba &z\ln z,\quad\quad\quad\quad z\le k,\\
 &z\ln k+z\int_{k}^{z}\frac{T_{k}(s)}{s^{2}}ds,\quad z\ge k.\\
 \ea\right.\enn
A direct computation shows that \bnn b_{k}(z)=L_{k}(z)-\left(\ln k+\int_{k}^{k+1}\frac{T_{k}(s)}{s^{2}}+1\right)z\enn
belongs to   $C([0,\infty))\cap C^{1}((0,\infty)),$    $b_k'(z)=0$ if $z\ge k+1$,  and  $b_{k}'(z)z-b_{k}(z)=T_{k}(z).$
Choosing $b=b_{k}$    in   \eqref{wq},  we infer (approximating  $b_{k}(z)$ near $z=0$)
\bnn  \div (b_{k}(\n) u)+ T_{k}(\n)\div u=0\quad {\rm in}\,\,\,\mathcal{D}(\r),\enn
which implies
 \be\la{8.2} \int T_{k}(\n)\div u=0.\ee
Also,  one has
  \be\la{8.3} \int \overline{T_{k}(\n)\div u} =\lim_{\de\rightarrow 0}\int T_{k}(\n_{\de})\div u_{\de}=0.\ee
From \eqref{8.2}-\eqref{8.3} we  obtain
\be\ba\la{8.4} &C\| T_{k}(\n) -\overline{T_{k}(\n)} \|_{L^{2}}\\
&\ge (2\lambda_{1}+\lambda_{2}) \int \left(T_{k}(\n) -\overline{T_{k}(\n)}\right)\div u\\
&=(2\lambda_{1}+\lambda_{2}) \int \overline{T_{k}(\n)\div u}-\overline{T_{k}(\n)}\div u\\
&=(2\lambda_{1}+\lambda_{2})\lim_{\de\rightarrow0}\int \left(T_{k}(\n_{\de})\div u_{\de}-\overline{T_{k}(\n)}\div u\right)\\
  &\ge  \lim_{\de\rightarrow 0}\int (\n_{\de}^{2}\frac{\p f}{\p \n_{\de}}-\n^{2}\frac{\p f}{\p \n})\left(T_{k}(\n_{\de})-T_{k}(\n)\right)\\
  &\ge (\g-1)\lim_{\de\rightarrow 0}\int \left(T_{k}(\n_{\de})-T_{k}(\n)\right)^{\g+1},\ea\ee
  where the inequalities are due to \eqref{7.13}-\eqref{7.12}.
Therefore, \eqref{8.4} gives
\be\la{bb10}\ba&
\lim_{k\rightarrow\infty} \lim_{\de\rightarrow0}\|T_{k}(\n) - T_{k}(\n_{\de})\|_{L^{\g+1}}^{\g+1}\\
    &\le C\lim_{k\rightarrow\infty}\|T_{k}(\n)-\overline{T_{k}(\n)}\|_{L^{2}} \\
 &\le C \lim_{k\rightarrow\infty}\lim_{\de\rightarrow0}\left(\|T_{k}(\n)-\n\|_{L^{2}}+\|T_{k}(\n_{\de})-\n_{\de}\|_{L^{2}} \right).\ea\ee
However,  by Lemma \ref{lem5.2},     $$\|\n_{\de}\|_{L^{2}}^{2}\le C\|\n_{\de}\|_{L^{\g+\th}}^{\g+\th}\le \|\n_{\de}^{2+\th}\frac{\p f}{\p\n_{\de}}\|_{L^{1}}\le C.$$    Then,  the following estimate holds true
\bnn\ba  \| T_{k}(\n_{\de})-\n_{\de}\|_{L^{p}}
& = \| T_{k}(\n_{\de})-\n_{\de}\|_{L^{2}(\{\n_{\de}\ge k\})} \\&\le 2 \|\n_{\de}\|_{L^{2}(\{\n_{\de}\ge k\})}\le C k^{\frac{p-\g}{2}}\rightarrow0  \quad{\rm as}\,\, k\rightarrow \infty,\ea\enn
which is  uniform in $\de$.  Consequently,  \be\la{ddf} \lim_{k\rightarrow\infty}\|\overline{T_{k}(\n)}-\n\|_{L^{2}}\le  \lim_{k\rightarrow\infty}\lim_{\de\rightarrow0}\| T_{k}(\n_{\de})-\n_{\de}\|_{L^{2}}=0.\ee
The same argument yields
 \be\la{7.6} \ba \lim_{k\rightarrow\infty}\| T_{k}(\n)-\n\|_{L^{2}}=0.\ea\ee
 In terms of \eqref{bb10}-\eqref{7.6}, one has
\bnn \ba&\lim_{k\rightarrow\infty}\lim_{\de\rightarrow0}\|\n_{\de}-\n\|_{L^{1}}\\&
\le \lim_{k\rightarrow\infty}\lim_{\de\rightarrow0}\left(\|\n_{\de}-T_{k}(\n_{\de})\|_{L^{1}}+\|T_{k}(\n_{\de})-T_{k}(\n)\|_{L^{1}}+\|T_{k}(\n)-\n\|_{L^{1}}\right)\\&
=0.\ea\enn

  The proof of Theorem \ref{t} is completed.

   \bigskip

 \section*{Acknowledgement}

 The research of  Z. Liang was partially supported by the fundamental research funds for central universities (JBK 1805001).
 The research  of D. Wang was partially supported by the National Science Foundation under grants DMS-1613213 and DMS-1907519.
 The authors would like to thank the referees for their careful reading of the manuscript and for their valuable comments, corrections, and suggestions.
 \bigskip

 \begin{thebibliography} {99}
\bibitem {ables0}  H.  Abels,  On a diffuse interface model for two-phase flows of viscous, incompressible fluids with matched densities,  {\it Arch. Rat. Mech. Anal.}  \textbf{194} (2009),   463--506.

\bibitem {ables1} H. Abels,   Existence of weak solutions for a diffuse interface model for viscous, incompressible fluids with general densities,   {\it Comm. Math. Phys.}  \textbf{289} (2009),  45--73.

\bibitem {ables2}  H. Abels,  Strong well-posedness of a diffuse interface model for a viscous, quasi-incompressible two-phase flow, {\it SIAM J. Math. Anal.}  \textbf{44} (2012), 316--340.

\bibitem{ADG13}
H. Abels; D. Depner; H. Garcke, Existence of weak solutions for a diffuse interface model for two-phase flows of incompressible fluids with different densities,   {\it J. Math. Fluid Mech.} \textbf{15} (2013), 453--480.

\bibitem{Fei} H.  Abels;  E. Feireisl,   On a diffuse interface model for a two-phase flow of compressible viscous fluids,  {\it Indiana Univ. Math. J.} \textbf{57}(2) (2008),  659--698.

\bibitem{AGG11}
H. Abels; H. Garcke; G. Gr\"un, Thermodynamically consistent, frame indifferent diffuse interface models for incompressible two-phase flows with different densities,
{\it Math. Models Methods Appl. Sci.} \textbf{22}(3) (2012),  1150013, 40 pp.

\bibitem{ad} R. Adams,    Sobolev spaces, New York: Academic Press, 1975.

 \bibitem{an}  L.  Antanovskii,  A phase field model of capillarity,  {\it Phys. Fluids A}  \textbf{7} (1995), 747--753.

\bibitem{AM} D. Anderson;  G.  McFadden;  A.  Wheeler,   Diffuse-interface methods in fluid mechanics. Annual review of fluid mechanics,   {\it Annu. Rev. Fluid Mech. Annual Reviews}  \textbf{30}    (1998), 139--165.

\bibitem{bat} G.  Batchelor,   An Introduction to Fluid Dynamics,  Cambridge University Press, Cambridge, 1999.

\bibitem{bis} T. Biswas; S. Dharmatti; P. Mahendranath; M. Mohan, On the stationary nonlocal Cahn-Chilliard-Navier-Stokes system: existence, uniqueness and exponential stability.  arXiv:1811.00437 [math.AP].

\bibitem{boyer2}  F.  Boyer,   Mathematical study of multi-phase flow under shear through order parameter
formulation,  {\it Asymptot. Anal.}  \textbf{20}(2) (1999),  175--212.

\bibitem{boyer1} F.  Boyer,  {Nonhomogeneous Cahn-Hilliard fluids,}  {\it  Ann. Inst. H. Poincar Anal. Non Linaire}  \textbf{18}(2) (2001), 225--259.

\bibitem{chan} J. Cahn; J.  Hilliard,  Free energy of non-uniform system. I. Interfacial free energy,
{\it J. Chem. Phys.} \textbf{28} (1958),  258--267.

\bibitem{chen}   Y. Chen;   Q. He;  M. Mei; X. Shi, Asymptotic Stability of Solutions for
1-D Compressible Navier-Stokes-Cahn-Hilliard system,   {\it  J. Math. Aanl. Appl.} \textbf{467}(1)  (2018), 185--206.

\bibitem{ds} H.  Davis; L.  Scriven,   Stress and structure in fluid interfaces, {\it Adv. Chem. Phys.}
\textbf{49}  (1982), 357--454.

 \bibitem{dss} H. Ding;  P. Spelt;  C. Shu, { Diffuse interface model for incompressible two-phase flows with large density ratios,} {\it J. Comp. Phys.}  \textbf{22} (2007), 2078--2095.

\bibitem{ding} S.  Ding;  Y. Li; W. Luo,
 Global solutions for a coupled compressible Navier-Stokes/Allen-Cahn system in 1D, {\it J. Math. Fluid Mech.} \textbf{15} (2013), no. 2, 335--360.

\bibitem{dre} W. Dreyer; J. Giesselmann;  C. Kraus,
  A compressible mixture model with phase transition,  {\it Phys.  D.}  \textbf{273-274}   (2014), 1--13.

\bibitem{eb}
D.  Edwards;  H. Brenner;  D. Wasan,  { Interfacial Transport Process and Rheology,} Butterworths/Heinemann, London, 1991.

\bibitem{ERS}
M. Eleuteri;  E.  Rocca; G. Schimperna,
On a non-isothermal diffuse interface model for two-phase flows of incompressible fluids,
 {\it Discrete Contin. Dyn. Syst.}  \textbf{35} (6) (2014),  2497--2522.

\bibitem{evans1}
L.   Evans,  Partial differential equations. Second edition, Graduate Studies in Mathematics  \textbf{19}. American Mathematical Society, Providence, RI, 2010.

\bibitem{fei5} E. Feireisl, A. Novotn\'{y},  Singular Limits in Thermodynamics of Viscous Fluids, Advances
in Mathematical Fluid Mechanics, Birkh\"{a}user, Basel, 2009.

\bibitem{jiang} E. Feireisl; A. Novotn\'{y}; H. Petzeltov\'{a}, On the existence of globally defined weak solutions to the Navier-Stokes equations, {\it J. Math. Fluid Mech.} \textbf{3}  (2001),  358--392.

\bibitem{fri} S. Frigeri,    Global existence of weak solutions for a nonlocal model for two-phase flows of incompressible fluids with unmatched densities,  {\it Math. Models  Methods  Appl. Sci.} \textbf{26}(10) (2016), 1955--1993.

\bibitem{fri2} S. Frigeri;  C. G. Gal;  M. Grasselli;  J. Sprekels, Two-dimensional nonlocal Cahn-Hilliard-Navier-Stokes systems with variable viscosity, degenerate mobility and singular potential. {\it Nonlinearity} \textbf{32}(2) (2019),   678--727.

 \bibitem{freh} J. Frehse;  M. Steinhauer; W. Weigant,  {The Dirichlet problem for steady viscous compressible flow in three dimensions}, {\it J. Math. Pures Appl.} \textbf{97}  (2012),   85--97.

\bibitem{gt} D. Gilbarg;  N.  Trudinger,  Elliptic Partial Differential Equations of Second Order,  2nd edition, Grundlehren Math. Wiss., \textbf{224}, Springer-Verlag, Berlin, Heidelberg, New York, 1983.

\bibitem{Gui} G. Gui; Z. Li, Global well-posedness  of the 2-D incompressible Navier-Stokes-Cahn-Hilliard system with a singular free energy density. arXiv:1810.12705 [math.AP].

\bibitem{hh} P.  Hohenberg;  B. Halperin,  Theory of dynamic critical phenomena,  {\it Rev. Mod. Phys.} \textbf{49} (1977),  435--479.

\bibitem{jiang2} S. Jiang;  P. Zhang,   Global spherically symmetry solutions of the compressible isentropic Navier-Stokes equations, {\it Comm. Math. Phys.} \textbf{215}   (2001),  85--97.

\bibitem{lion} P. Lions,
Mathematical topics in fluid mechanics. Vol. 2. Compressible models. Oxford Lecture Series in Mathematics and its Applications, \textbf{10}.  Oxford Science Publications. The Clarendon Press, Oxford University Press, New York, 1998.

\bibitem{liu} C.  Liu;  J.  Shen,  { A phase field model for the mixture of two incompressible fluids and its approximation by a Fourier-spectral method,}  {\it Phys. D}  \textbf{179}(3-4) (2003),  211--228.

\bibitem {low} J. Lowengrub;  L. Truskinovsky, { Quasi-incompressible Cahn-Hilliard fluids and topological transitions,}  {\it Proc. R. Soc. Lond. A}  \textbf{454}  (1998),  2617--2654.

\bibitem{mp1} P.B. Mucha, M. Pokorn\'y, On a new approach to the issue of existence and regularity for the
steady compressible Navier-Stokes equations. {\it Nonlinearity} \textbf{19},  (2006), 1747--1768.

\bibitem{mpm} P.  B. Mucha;  M. Pokorn\'y;  E. Zatorska, Existence of stationary weak solutions for compressible heat conducting flows,  Handbook of Mathematical Analysis in Mechanics of Viscous Fluids, 2595--2662, Springer, Cham, 2018.

 \bibitem{novo}
A. Novotn\'y;  I. Stra\v skraba,  {Introduction to the Mathematical Theory of Compressible Flow.}
Oxford Lecture Series in Mathematics and its Applications, \textbf{27}  Oxford University Press, Oxford, 2004.

 \bibitem{novo1}
S. Novo;  A. Novotn\'y,  On the existence of weak solutions to the steady compressible Navier-Stokes equations when the density is not square integrable, {\it J. Math. Fluid Mech.} \textbf{42} 3  (2002),  531--550.

\bibitem{plot} P.  Plotnikov;  J. Sokolowski,  Concentrations of solutions to time-discretized compressible Navier-Stokes equations, {\it Comm. Math. Phys.}  \textbf{258}(3)  (2005),  567--608.

\bibitem{ko1} S. Ko;  P. Pustejovska;  E. Suli,  Finite element approximation of an incompressible chemically reacting non-Newtonian fluid,   {\it Math. Mod. Numerical Appl.} \textbf{52}(2) (2018), 509--541.

\bibitem{ko2} S. Ko;    E. Suli,  Finite element approximation of steady flows of generalized Newtonian fluids with concentration-dependent power-law index,  {\it Math. Comp.} \textbf{88}(317) (2019), 1061--1090.

\bibitem{stein} E. Stein, { Singular integrals and differentiability properties of functions,} Princeton Univ. Press, Princeton, New Jersey, 1970.

\bibitem{star}
V.  Starovoitov,  On the motion of a two-component fluid in the presence of capillary forces,  {\it Mat. Zametki}  \textbf{62}(2) (1997),  293--305,   {\it  transl. in Math. Notes}  \textbf{62}(1-2) (1997),  244--254.

\bibitem{tru}  L.  Truskinovsky,   Kinks versus shocks. In Shock induced transitions and phase structures in general media,   IMA Series in Mathematics and its
Applications  \textbf{52} (1993), 185--229.

\bibitem{yue} P.  Yue; J. Feng; C. Liu;  J. Shen; A diffuse-interface method for simulating two-phase flows of complex fluids, {\it J. Fluid Mech.}  \textbf{515},  (2004),  293--317.
\bibitem{zie} W. P.  Ziemer, Weakly Differentiable Functions.  Springer, New York,  1989.
\end {thebibliography}
\end{document}